\newcommand{\banacha}{\mathbb X}
\newcommand{\banachb}{\mathbb Y}
\newcommand{\banachc}{\mathbb Z}
\newtheorem{theorem}{Theorem}
\newtheorem{lemma}[theorem]{Lemma}
\newtheorem{corollary}[theorem]{Corollary}
\newtheorem{proposition}[theorem]{Proposition}
\newtheorem{remark}{Remark}
\begin{document} 
\title{ A robust  semi-local  convergence analysis   of Newton's  method  for cone inclusion problems in Banach spaces under   affine invariant   majorant condition}

\author{ O.  P. Ferreira\thanks{IME/UFG,  CP-131, CEP 74001-970 - Goi\^ania, GO, Brazil (Email: {\tt
      orizon@ufg.br}). The author was supported in part by CAPES (Projeto  019/2011- Coopera\c c\~ao Internacional Brasil-China), CNPq Grant 471815/2012-8, CNPq Grant 303732/2011-3, PRONEX--Optimization(FAPERJ/CNPq) and FAPEG/GO.} 
}
\date{March 10, 2014}  
\maketitle
\begin{abstract}
A  semi-local  analysis of Newton's method for solving nonlinear inclusion problems in Banach space  is presented in this paper. Under a affine majorant condition on the nonlinear function which is associated to the inclusion problem, the robust convergence of the method and results on the convergence rate are established. Using this result we show that the robust analysis of the Newton's method for solving nonlinear inclusion problems under affine Lipschitz-like and  affine Smale's  conditions can be obtained as a special case of the general theory. Besides for the degenerate cone,  which the nonlinear inclusion  becomes  a nonlinear equation, ours analysis retrieve the classical results on local analysis of Newton's method.\\

\noindent
{\bf Keywords:} Inclusion problems, Newton's method, majorant condition, local convergence.

\end{abstract}
\section{Introduction}\label{sec:int}
The idea of solving a nonlinear  inclusion problems of the form
\begin{equation} \label{eq:ipi}
\mbox {Find} ~ \bar{x} ~\mbox{such that} ~  F(\bar{x}) \in C, 
\end{equation}
where $C$  is a nonempty closed convex cone in a Banach space $\banachb$ and $F$ is a function from a Banach space $\banacha$ into $\banachb$,  plays a huge role in classical analysis and its applications. For instance,  the  special case in which $C$ is the degenerate cone $\{0\}\subset \banachb$,  the  inclusion  problem  in  \eqref{eq:ipi} correspond to a nonlinear equation.   In the case for which $\banacha=\mathbb{R}^{n}$, $\banachb=\mathbb{R}^{p+q}$ and $C=\mathbb{R}^{p}_{-}\times \{0\}$ is the product of the nonpositive orthant in $\mathbb{R}^{p}$ with the origin in  $\mathbb{R}^{q}$,  the  inclusion  problem  in  \eqref{eq:ipi} correspond to a nonlinear systems of  $p$  inequalities and  $q$ equality,  for example see \cite{BCSS97}, \cite{DS96},     \cite{D73}, \cite{DHB}, \cite{DR2009}, \cite{HeSun2005},  \cite{KP02}, \cite{LiNg2012}  and  \cite{P70}.  

The Newton's  method and its variant are powerful tools for solving nonlinear equation in  Banach space, having a wide range of applications in pure and applied mathematics, see \cite{B1994},  \cite{BCSS97}, \cite{DHB},  \cite{DR2010},  \cite{DR2009}, \cite{KP02},  \cite{M61},  \cite{N56} and \cite{S199}. Newton's Method has  been extended in order  to solve nonlinear systems of equalities and inequalities (see \cite{D73}, \cite{Rob1972-2}, \cite{P70}) and several  other different purposes, see  \cite{ADMMS02}, \cite{ABM2004}, \cite{B1994}, \cite{BCSS97}, \cite{DPM03}, \cite{DR2010},  \cite{DR2009},  \cite{WL09}, \cite{LW07}, \cite{LW06}, \cite{S199} and \cite{WHL2009}.  In particular,  Robinson in \cite{Rob1972-2} (see also \cite{LiNg2014}, \cite{LiNg2012}, \cite{WHL2009})  generalized Newton's method for solving problems of the form  \eqref{eq:ipi},  which becomes  the usual Newton's method  to the special case in which $C$ is the degenerate cone $\{0\}\subset \banachb$.
 
 One of the usual hypotheses used in convergence analysis of Newton's method is that the Lipschitz
continuity of the derivative of the nonlinear operator in question or something like Lipschitz continuity
is critical; that is, keeping control of the derivative is an important point in the convergence analysis
of Newton's method.   These assumptions seem  to be actually natural in analysis of Newton's method, \cite{C10},  \cite{DR2010}, \cite{DR2009}, \cite{F08},  \cite{FS09},  \cite{Hu04}, \cite{Cl08}, \cite{MR2475307} and \cite{W00}.  On the other hand, in the last few years, a couple of papers have dealt with the issue of  convergence analysis of the Newton's method and its variants  by relaxing the assumption of Lipschitz continuity of the derivative of the function, which define  the nonlinear equation in consideration, see \cite{C10}, \cite{F11}, \cite{F08},   \cite{FGO13}, \cite{FS12},  \cite{FS09}, \cite{FS2002},  \cite{Hu04}, \cite{LiNg2012}, \cite{WL09},  \cite{Li2009}, \cite{Cl08},  \cite{LW07}, \cite{LW06},  \cite{MR2475307}, \cite{W99} and  \cite{W00}.  In particular,  in  \cite{FS09}, under a affine majorant condition,  a  semi-local convergence, as well as  results on the convergence rate are established. The advantage of  working with a  affine majorante condition rests in the fact that it allow to unify  converge results on Newton's method without previous relationship, see  \cite{FS09},  besides makes the results insensitive with respect to invertible continuous linear mapping, see \cite{DHB} and \cite{DH}. It is worth mentioning that the affine majorant condition used  in  \cite{FS09} is equivalent to Wang's condition (see  \cite{W99}) whenever   the derivative of majorant function is convex.  

In  \cite{FS2002} a new technique for analyzing the convergence properties  of Newton's Method which simplifies the analyses and proof of the results has been introduced.  After that, this technique was successfully employed for analyzing Newton's method in difference context, \cite{ABM2004}, \cite{FS09}, \cite{WL09},  \cite{Li2009}, \cite{LW07} and  \cite{LW06}. In the present work, we will use the technique introduced in  \cite{FS2002} to present a  semi-local convergence analysis of Newton's method  for  solving a nonlinear  inclusion problems of the form \eqref{eq:ipi}.  In our analysis, the classical  Lipschitz continuity of the derivative of the nonlinear function, which define  the nonlinear inclusion in consideration,  is relaxed by using a affine majorant function.  Although the semi-local convergence analysis of the Newton method  for solving nonlinear inclusion  under  Lipschitz-like (see,  \cite{FGO13},   \cite{LiNg2012} and  \cite{Rob1972-2}) or  Wang's   condition (see \cite{LiNg2014}, \cite{LiNg2007} and \cite{WHL2009})    are well known, as far as we know,   the {\it robust} semi-local convergence analysis  of the Newton's method   for solving nonlinear inclusion problems in Banach space  under general affine invariant assumptions  is a new contribution of this paper. 

 The basic idea of the analysis is to find a good region for Newton's Method.  In this region, the  majorant function indeed bounds the nonlinear function  associated to the inclusion problem, the behavior of Newton's method is controlled by Newton's iteration of the majorant  function and, as a consequence,   the region  is  invariant under Newton's iteration  multifunction. Besides,  in this  region,   the analysis  provides a clear relationship between the majorant function and the nonlinear function  under consideration and allow us to obtain bounds for the $Q$-quadratic convergence of the  method,  which depend on the behavior  of the majorant function at its smallest zero.  It is worth pointing out that, the technique employed also makes it possible to analyze the method in the presence of errors in the initial point, which shows that the assumption on the initial point is robust.  Finally, using this convergence result we obtain a robust analysis of the Newton's method for solving nonlinear inclusion problems under affine invariant Lipschitz-like and Smale's conditions as a special case of the general theory. We remark that,  for the special case in which $C$ is the degenerate cone $\{0\}\subset \banachb$,  the analysis presented merge in the usual local  convergence analysis on Newton's  method, see \cite{FS09}.

The organization of the paper is as follows.
In Section \ref{sec:int.1},  some notations and  basic results  used in the paper are presented. In Section \ref{lkant}, the main results are stated and  in   Section \ref{sec:PR} some properties of the majorant function are established and the main relationships  between the majorant function and the nonlinear operator used in the paper are presented. In Section \ref{sec:convxk},  the main results are proved and the applications of this results are given in Section \ref{apl}. Some final remarks are made in Section~\ref{fr}.
\subsection{Notation and auxiliary results} \label{sec:int.1}
The following notations and results are used throughout our presentation. We beginning  with the following  elementary convex analysis result:
\begin{proposition} \label{pr:conv.aux1}
Let $I\subset \mathbb{R}$ be an interval and $\varphi:I\to \mathbb{R}$ be convex. For any $s_0\in \mathrm{int}(I)$,   the left derivative there exist (in $\mathbb{R}$)
$$
D^- \varphi(s_0):={\lim}_{s\to s_0 ^-} \; \frac{\varphi(s_0)-\varphi(s)}{s_0-s}
={\sup}_{s<s_0} \;\frac{\varphi(s_0)-\varphi(s)}{s_0-s}. \\
$$
Moreover,  if $s,t,r\in I$, $s<r$, and $s\leqslant t\leqslant r$ then $\varphi(t)-\varphi(s) \leqslant \left[\varphi(r)-\varphi(s)\right][(t-s)/(r-s)].$
\end{proposition}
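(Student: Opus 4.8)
The plan is to establish the second (chord) inequality first, since it is the elementary engine from which the assertion about the left derivative follows, and then to invoke that inequality twice: once to get monotonicity of the relevant difference quotient, and once to bound it from above.

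First I would dispose of the chord inequality. If $t=s$ or $t=r$ both sides coincide, so I may assume $s<t<r$ and set $\lambda=(r-t)/(r-s)\in(0,1)$; a direct computation gives $t=\lambda s+(1-\lambda)r$ and $1-\lambda=(t-s)/(r-s)$. Convexity of $\varphi$ then yields $\varphi(t)\le\lambda\varphi(s)+(1-\lambda)\varphi(r)$, and subtracting $\varphi(s)$ and rearranging gives $\varphi(t)-\varphi(s)\le(1-\lambda)\left[\varphi(r)-\varphi(s)\right]=\left[(t-s)/(r-s)\right]\left[\varphi(r)-\varphi(s)\right]$, which is exactly the stated inequality.

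Next, for the left derivative, fix $s_0\in\mathrm{int}(I)$ and consider the difference quotient $g(s):=\left[\varphi(s_0)-\varphi(s)\right]/(s_0-s)$ for $s\in I$ with $s<s_0$. Applying the chord inequality to a triple $s_1<s_2<s_0$ (with the roles $s=s_1$, $t=s_2$, $r=s_0$) and using $\varphi(s_0)-\varphi(s_2)=\left[\varphi(s_0)-\varphi(s_1)\right]-\left[\varphi(s_2)-\varphi(s_1)\right]$, after dividing by $s_0-s_2>0$ one obtains $g(s_1)\le g(s_2)$; hence $g$ is nondecreasing on $I\cap(-\infty,s_0)$, so $\lim_{s\to s_0^-}g(s)=\sup_{s<s_0}g(s)$ exists in $\mathbb{R}\cup\{+\infty\}$. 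To see the supremum is finite I use that $s_0$ is interior: choose $r\in I$ with $r>s_0$. Applying the chord inequality to $s<s_0<r$ and doing the elementary algebra — with $A=\varphi(s_0)-\varphi(s)$, $B=\varphi(r)-\varphi(s_0)$, $a=s_0-s>0$, $b=r-s_0>0$, the inequality $A\le(A+B)\,a/(a+b)$ reduces to $A/a\le B/b$ — gives $g(s)\le\left[\varphi(r)-\varphi(s_0)\right]/(r-s_0)$ for every $s<s_0$. Thus the supremum is bounded by this finite constant, so the left limit exists in $\mathbb{R}$ and equals the supremum, as claimed.

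I do not anticipate any real obstacle here; the only points requiring care are bookkeeping ones — getting the convex-combination coefficients right and tracking the direction of inequalities when dividing by the negative quantity $s_0-s$ (equivalently by $s_0-s_2$), and explicitly using $s_0\in\mathrm{int}(I)$ to produce the point $r>s_0$ that forces the supremum to be finite rather than $+\infty$.
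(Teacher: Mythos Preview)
Your argument is correct. The paper does not actually supply a proof of this proposition---it is stated as an ``elementary convex analysis result'' and left without proof---so there is nothing to compare against; your derivation (chord inequality from the definition of convexity, then monotonicity and boundedness of the left difference quotient) is the standard one and fills the gap cleanly. One minor slip in your closing remarks: the quantity $s_0-s$ (and $s_0-s_2$) is positive, not negative, for $s<s_0$; your actual computations handle the signs correctly, so this does not affect the proof.
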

Let $\banacha$  be a  Banach space. The {\it open} and {\it closed ball} at $x$ with radius $\delta>0$ are denoted, respectively, by
$$
B(x,\delta) := \{ y\in \banacha ~:~ \|x-y\|<\delta \}\;\;\; \mbox{and}\;\;\;B[x,\delta] := \{ y\in \banacha  ~:~\|x-y\|\leqslant \delta
\}.
$$
\begin{proposition} \label{pr:qc}
Let $\{z_k\}$ be a sequence in $\banacha$ and  $\Theta >0$. If  $\{z_k\}$ converges to $z_*$ and  satisfies
\begin{equation} \label{eq:qcfr}
\|z_{k+1}-z_{k}\|\leq \Theta  \|z_{k}-z_{k-1}\|^2, \qquad k=1,2, \ldots.
\end{equation}
then $\{z_k\}$ converges $Q$-quadratically to $z_*$ as follows
$$
\limsup_{k\to \infty } \frac{\|z_{k+1}-z_{*}\|}{\|z_{k}-z_{*}\|^2}\leq \Theta.
$$
\end{proposition}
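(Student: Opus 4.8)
The plan is to exploit the Cauchy-telescoping argument: since $\{z_k\}$ converges to $z_*$, for any fixed index $k$ we can write $z_* - z_k$ as the sum of the tail of consecutive differences, and then estimate each term using the recursive bound \eqref{eq:qcfr}. First I would fix $k$ and, for $j \geq k$, iterate \eqref{eq:qcfr} to control $\|z_{j+1}-z_j\|$ in terms of $\|z_k - z_{k-1}\|$. Writing $a_j := \|z_{j+1}-z_j\|$, the hypothesis gives $a_j \leq \Theta a_{j-1}^2$, so setting $t_j := \Theta a_j$ one gets $t_j \leq t_{j-1}^2$, hence $t_j \leq t_{k-1}^{2^{j-k+1}}$ for $j \geq k-1$. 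Thus, once $\|z_k-z_{k-1}\|$ is small enough that $q_k := \Theta\|z_k-z_{k-1}\| < 1$ (which happens for all large $k$ since $a_k \to 0$ by convergence), the differences $a_j$ for $j \geq k$ decay doubly exponentially and the geometric-type series $\sum_{j \geq k} a_j$ converges.

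The key step is then the bound
$$
\|z_* - z_k\| \;\leq\; \sum_{j=k}^{\infty} \|z_{j+1}-z_j\|
\;=\; \frac{1}{\Theta}\sum_{j=k}^{\infty} t_j
\;\leq\; \frac{1}{\Theta}\sum_{i=1}^{\infty} q_k^{\,2^i}
\;\leq\; \frac{q_k^2}{\Theta}\cdot\frac{1}{1-q_k},
$$
where in the last inequality I bound $q_k^{2^i} \leq q_k^{i+1}$ for $q_k<1$ and $i\geq 1$, then sum the geometric series. Applying the same reasoning one index up gives a lower-order control, but what I actually need is an upper bound on $\|z_{k+1}-z_*\|$: by the same telescoping starting at $k+1$,
$$
\|z_{k+1}-z_*\| \;\leq\; \frac{1}{\Theta}\sum_{i=1}^{\infty} q_{k+1}^{\,2^i},
\qquad q_{k+1} = \Theta\|z_{k+1}-z_k\| \leq \Theta\,(\Theta\|z_k-z_{k-1}\|^2) = q_k^2.
$$
Hence $\|z_{k+1}-z_*\| \leq \frac{1}{\Theta}\sum_{i\geq 1} q_k^{2^{i+1}} \leq \frac{q_k^4}{\Theta(1-q_k^2)}$.

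Finally I would compare this with $\|z_k - z_*\|^2$. The subtle point — and the main obstacle — is getting a clean \emph{lower} bound on $\|z_k-z_*\|$, or rather arranging the inequalities so the ratio $\|z_{k+1}-z_*\|/\|z_k-z_*\|^2$ is controlled by something tending to $\Theta$. One route: from $\|z_{k+1}-z_k\| \leq \Theta\|z_k-z_{k-1}\|^2$ and the triangle inequality $\|z_k-z_*\| \geq \|z_k - z_{k-1}\| - \|z_{k-1}-z_*\| $ is awkward; instead I would use that $\|z_{k+1}-z_*\| \leq \|z_{k+1}-z_k\| + \|z_k - z_*\|$ together with the fact that $\|z_k - z_*\| = o(\|z_k-z_{k-1}\|)$ is \emph{not} automatic, so the cleaner path is: write $\|z_{k+1}-z_*\| \leq \sum_{j\geq k+1} a_j = a_{k+1}\big(1 + \sum_{j\geq k+2} a_j/a_{k+1}\big)$, and since $a_{j+1}/a_j \leq \Theta a_j \to 0$, the bracket tends to $1$; similarly $\|z_k-z_*\| \geq a_k - \sum_{j\geq k+1}a_j = a_k\big(1 - o(1)\big)$ provided $a_k>0$ (if some $a_k=0$ the sequence is eventually constant and the claim is trivial). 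Therefore
$$
\frac{\|z_{k+1}-z_*\|}{\|z_k-z_*\|^2} \;\leq\; \frac{a_{k+1}\,(1+o(1))}{a_k^2\,(1-o(1))^2} \;\leq\; \frac{\Theta\,a_k^2\,(1+o(1))}{a_k^2\,(1-o(1))^2} \;=\; \Theta\,(1+o(1)),
$$
using \eqref{eq:qcfr} in the numerator, and taking $\limsup_{k\to\infty}$ yields the stated bound. The only real care needed is the case distinction on whether the $a_k$ vanish and the verification that all the $o(1)$ error terms genuinely vanish, which follows from $a_k \to 0$.
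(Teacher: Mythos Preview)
Your argument is correct. The paper itself does not give a proof here; it simply cites Proposition~2 of \cite{FGO13} and remarks that finite dimensionality plays no role, so there is nothing to compare at the level of detail you have written. Your final argument---telescope $z_*-z_k$ into $\sum_{j\ge k}a_j$ with $a_j=\|z_{j+1}-z_j\|$, use $a_{j+1}\le\Theta a_j^2$ to show $\sum_{j\ge k+1}a_j=o(a_k)$, hence $\|z_{k+1}-z_*\|\le a_{k+1}(1+o(1))\le\Theta a_k^2(1+o(1))$ and $\|z_k-z_*\|\ge a_k(1-o(1))$, then divide---is the standard one and is essentially what one finds in \cite{FGO13}.

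Two minor comments on presentation. First, the opening block with $q_k$ and the bounds $\frac{q_k^2}{\Theta(1-q_k)}$, $\frac{q_k^4}{\Theta(1-q_k^2)}$ is superseded by your cleaner second pass; you can drop it. Second, you should make explicit that for all sufficiently large $k$ one has $z_k\neq z_*$ whenever $a_k>0$: indeed your own tail bound gives $\sum_{j\ge k+1}a_j\le \Theta a_k^2/(1-\Theta a_{k+1})<a_k$ once $\Theta a_k$ is small, so the lower bound $\|z_k-z_*\|\ge a_k-\sum_{j\ge k+1}a_j>0$ holds eventually and the ratio in the $\limsup$ is well defined. With that noted, the proof is complete.
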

\begin{proof}
The proof follows the same pattern as the proof of Proposition 2  of \cite{FGO13},    since finite dimensionality plays no role.
\end{proof}
Let $\banacha$ and $\banachb$ be Banach spaces. A  set valued mapping $T: \banacha \rightrightarrows  \banachb $ is called {\it sublinear} or   {\it convex precess}    when its graph is a convex cone, i.e., 
\begin{equation} \label{eq:dsblm}
0\in T(0), \qquad T(\lambda x)=\lambda T(x) \quad  \forall~ \lambda>0, \qquad T(x+x') \supseteq T(x) + T(x'),  ~\quad  \forall ~ x, x'\in \banacha.
\end{equation}
 (sublinear mapping has been extensively studied in \cite{DR2009},  \cite{Rob1972},  \cite{Rocka2} and  \cite{Rocka}) the following definitions and results about  sublinear mappings will be need:  The {\it domain} and  {\it range}   of  a sublinear mapping $T$ are defined, respectively, by
$$
 \mbox{dom\,}T:=\{d\in \banacha~:~Td \neq \varnothing \}, \qquad   \mbox{rge\,}T:=\{y\in \banachb ~:~ y\in T(x) ~\mbox{for some} ~x\in\banacha \}, 
 $$
and the  {\it inverse} $T^{-1}:\banachb \rightrightarrows  \banacha $  of a sublinear mapping $T$  is another sublinear mapping  defined by 
\begin{equation} \label{eq:ding}
 T^{-1}y:=\{d\in \banacha~:~y\in T d\}, \quad  y\in \banachb.
 \end{equation}
The {\it norm} (or inner norm as is called in  see \cite{DR2009})   of  a sublinear mapping $T$ is defined  by
 \begin{equation} \label{eq;dn}
 \|T\|:=\sup \; \{ \|T d\|~: ~d\in  \mbox{dom\,}T, \; \|d\| \leqslant 1 \},
 \end{equation}
where $ \|T d\|:=\inf \{\|v\|~: ~v\in T d  \}$ for  $Td \neq \varnothing $. We use the convention $\|Td \|=+\infty$ for $Td = \varnothing $,   it will be also convenient to use the convention  $Td+ \varnothing=\varnothing$ for all $d\in \banacha$.
\begin{lemma}\label{L:Rocka1}
Let $T: \banacha \rightrightarrows  \banachb $ be a sublinear mapping with closed graph. Then $ \mbox{dom\,}T=\banacha$  if and only if $\|T\|<+\infty$ and $\mbox{rge\,}T=\banachb$ if and only if $ \|T^{-1}\|<+\infty$.
\end{lemma}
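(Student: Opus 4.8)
The plan is to reduce the second equivalence to the first by passing to $T^{-1}$, and to establish the first equivalence by an open-mapping-theorem type argument resting on Baire's category theorem. For the reduction, note that $T^{-1}$ is again a sublinear mapping (as recalled above) whose graph is the image of $\mbox{gph\,}T$ under the homeomorphism $(x,y)\mapsto(y,x)$ of $\banacha\times\banachb$ onto $\banachb\times\banacha$, hence closed; since moreover $\mbox{dom\,}T^{-1}=\mbox{rge\,}T$ by \eqref{eq:ding}, the assertion $\mbox{rge\,}T=\banachb\Leftrightarrow\|T^{-1}\|<+\infty$ is precisely the first equivalence applied to $T^{-1}$. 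Thus it suffices to prove that $\mbox{dom\,}T=\banacha$ if and only if $\|T\|<+\infty$. The implication ``$\Leftarrow$'' is immediate: by the convention $\|Td\|=+\infty$ whenever $Td=\varnothing$, finiteness of $\|T\|$ forces $Td\neq\varnothing$ for every $d$ with $\|d\|\leqslant1$, and then positive homogeneity $T(\lambda d)=\lambda Td$ for $\lambda>0$ together with $0\in T(0)$ yields $\mbox{dom\,}T=\banacha$.

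For the nontrivial implication, assume $\mbox{dom\,}T=\banacha$. For $n\in\mathbb{N}$ put $A_n:=\{d\in\banacha:Td\cap B[0,n]\neq\varnothing\}$; this is the projection onto $\banacha$ of the convex set $\mbox{gph\,}T\cap(\banacha\times B[0,n])$, so each $A_n$ is convex, contains $0$ (because $0\in T(0)$), and satisfies $A_n=nA_1$ since $\mbox{gph\,}T$ is invariant under multiplication by positive scalars. Every $d\in\banacha=\mbox{dom\,}T$ belongs to $A_n$ for all large $n$, hence $\banacha=\bigcup_n A_n\subseteq\bigcup_n n\overline{A_1}$, a countable union of closed sets. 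By Baire's theorem $\overline{A_1}$ has nonempty interior; choosing an interior point $x_0$ and, using that $\overline{A_1}$ is absorbing, a scalar $m>0$ with $-x_0/m\in\overline{A_1}$, the origin $0=\frac{1}{m+1}x_0+\frac{m}{m+1}(-x_0/m)$ is a convex combination of an interior point of the convex set $\overline{A_1}$ with a point of $\overline{A_1}$, so $0\in\mathrm{int}(\overline{A_1})$. Therefore $B(0,\rho)\subseteq\overline{A_1}$ for some $\rho>0$, and by positive homogeneity $B(0,t\rho)\subseteq\overline{A_t}$ for all $t>0$.

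The remaining step --- removing the closure --- is the main point, and it proceeds exactly as in the classical open mapping theorem. Fixing $d$ with $\|d\|<\rho$ and using the inclusions $B(0,2^{1-k}\rho)\subseteq\overline{A_{2^{1-k}}}$, one constructs inductively $u_k\in A_{2^{1-k}}$ with $\|d-\sum_{j=1}^{k}u_j\|<2^{-k}\rho$, and for each $k$ a vector $v_k\in Tu_k$ with $\|v_k\|\leqslant2^{1-k}$. Then $\sum_{j=1}^{k}u_j\to d$, the series $\sum_k v_k$ converges absolutely (with $\sum_k\|v_k\|\leqslant2$) to some $w$ with $\|w\|\leqslant2$, and, since $\mbox{gph\,}T$ is a convex cone and hence closed under addition, each partial sum $(\sum_{j\leqslant k}u_j,\sum_{j\leqslant k}v_j)$ lies in $\mbox{gph\,}T$; closedness of $\mbox{gph\,}T$ then gives $(d,w)\in\mbox{gph\,}T$, i.e.\ $w\in Td$, whence $\|Td\|\leqslant2$. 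Applying this to $\frac{\rho}{2\|d\|}d$ for arbitrary $d\neq0$ and invoking positive homogeneity once more yields $\|Td\|\leqslant(4/\rho)\|d\|$, and in particular $\|T\|\leqslant4/\rho<+\infty$. I expect this last iteration to be the only delicate part: it is there that the closedness of the graph together with its stability under addition (coming from the convex-cone structure) are essential, while the preceding steps are Baire category and elementary convexity.
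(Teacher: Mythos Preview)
Your proof is correct and essentially reproduces the Robinson--Ursescu open mapping theorem for convex processes in the special case needed here; the reduction of the second equivalence to the first via $T^{-1}$ is clean, the Baire-category step locating $0$ in the interior of $\overline{A_1}$ is handled properly (including the convexity argument that pushes an arbitrary interior point to the origin), and the iterative closure-removal is carried out carefully, exploiting both the closedness of $\mbox{gph\,}T$ and its stability under addition.

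The paper, by contrast, gives no argument at all: it simply cites Corollary~5C.2 of Dontchev--Rockafellar's monograph, where this result appears as a consequence of the Robinson--Ursescu theorem. So your approach is genuinely different in that it is self-contained, supplying the full open-mapping-type proof rather than invoking a black box. What the citation buys is brevity; what your argument buys is transparency about exactly which structural features of $T$ (closed graph, convex-cone graph, completeness of $\banacha$) are doing the work, and it makes the paper independent of an external reference for this foundational lemma. One small remark: your ``easy'' implication reads the norm as $\sup_{\|d\|\leqslant 1}\|Td\|$ with the convention $\|T\varnothing\|=+\infty$, which is the inner-norm definition from \cite{DR2009}; the paper's displayed formula \eqref{eq;dn} restricts the supremum to $d\in\mbox{dom\,}T$, which taken literally would make that direction fail, but the subsequent convention and the cited source make clear that the unrestricted inner norm is what is intended.
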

\begin{proof}
See Corollary 5C.2 of  \cite{DR2009}.
\end{proof}
Let $S, T: \banacha \rightrightarrows  \banachb $ and $U: \banachb \rightrightarrows  \banachc$  be sublinear mappings.  The   scalar {\it multiplication}, {\it addition}  and {\it composition} of sublinear mappings are sublinear mappings defined, respectively,  by
$$
(\alpha S)(x):=\alpha S(x),  \quad  (S+T)(x):= S(x) + T(x),  ~\quad  UT(x):=\bigcup \left\{ U(y)~: ~ y\in T(x) \right \}, 
$$
 for all $x\in \banacha$ and  $\alpha>0$  and the following norm properties there hold:
\begin{equation} \label{eq;pnor}
\|\alpha S\|=|\alpha| \|S\|, \quad \|S+T\|\leqslant \|S\|+\|T\|, \quad  \|UT\|\leqslant \|U\| \|T\|.
\end{equation}
\begin{remark} \label{r:pn}
Note that definition of the norm in \eqref{eq;dn} implies that  if $ \mbox{dom\,}T=\banacha$ and $A$ is a linear mapping from $\banachc$ to $\banacha$ then 
$\|T(-A)\|=\|TA\|$.
\end{remark}
\begin{lemma}\label{L:Banach}
Let $S, T: \banacha \rightrightarrows  \banachb $ be a sublinear mappings with closed graph such that  $\mbox{dom\,}S= \mbox{dom\,}T=X$ and 
 $
 \|T^{-1}\|<+\infty.
 $
Suppose that $\|T^{-1}\|\|S\|< 1$ and $(S+T)(x)$ is closed for each $x\in \banacha$ then   $\mbox{rge\,}(S+T)=\banacha $ and 
$$
\|(S+T)^{-1}\|\leqslant\frac{\|T^{-1}\|}{1-\|T^{-1}\|\|S\|}.
$$
\end{lemma}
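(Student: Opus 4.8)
Write $M:=\|T^{-1}\|$ and $N:=\|S\|$. The plan is to prove the lemma by a Neumann-type successive approximation that replaces the linearity used in the classical Banach perturbation lemma by the superadditivity $T(x+x')\supseteq T(x)+T(x')$ of convex processes. First I would record, via Lemma~\ref{L:Rocka1}, that the assumptions $\mathrm{dom}\,S=\banacha$ (together with closed graph of $S$) and $\|T^{-1}\|<+\infty$ give $N<+\infty$ and $\mathrm{rge}\,T=\banachb$; hence $q:=MN$ is a well-defined number with $q<1$, and $T^{-1}(w)\neq\varnothing$ for every $w\in\banachb$ while $S(x)\neq\varnothing$ for every $x\in\banacha$. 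It then suffices to show that for every $y\in\banachb$ and every prescribed $\varepsilon>0$ there is $x\in\banacha$ with $y\in(S+T)(x)$ and $\|x\|\leqslant \frac{M}{1-q}\|y\|+\varepsilon$: letting $\varepsilon\downarrow 0$ yields $\|(S+T)^{-1}y\|\leqslant \frac{M}{1-q}\|y\|$, which (taking the supremum over $\|y\|\leqslant 1$) gives both $\mathrm{rge}\,(S+T)=\banachb$ and the stated bound on $\|(S+T)^{-1}\|$.

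Fix $y\in\banachb$; the case $y=0$ is trivial since $0\in(S+T)(0)$ by \eqref{eq:dsblm}. I would construct sequences $\{u_k\}\subset\banacha$ and $\{v_k\}\subset\banachb$ recursively: choose $u_0\in T^{-1}(y)$ with $\|u_0\|$ within a small error of $M\|y\|$ (possible because $\mathrm{rge}\,T=\banachb$ and $T^{-1}(y)=\|y\|\,T^{-1}(y/\|y\|)$ by positive homogeneity of the sublinear map $T^{-1}$); then, given $u_k$, choose $v_k\in S(u_k)$ with $\|v_k\|$ within a small error of $N\|u_k\|$, and choose $u_{k+1}\in T^{-1}(-v_k)$ with $\|u_{k+1}\|$ within a small error of $M\|v_k\|$. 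Telescoping the estimates gives $\|u_{k+1}\|\leqslant q\|u_k\|+(\text{error}_k)$, so, after choosing the errors summable and sufficiently small, $\sum_k\|u_k\|\leqslant \frac{M}{1-q}\|y\|+\varepsilon$ and likewise $\sum_k\|v_k\|<+\infty$; by completeness the series $x:=\sum_{k\geqslant 0}u_k$ and $v:=\sum_{k\geqslant 0}v_k$ converge, with $\|x\|\leqslant \frac{M}{1-q}\|y\|+\varepsilon$.

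To finish, I would check $y\in(S+T)(x)$. Put $U_n:=\sum_{k=0}^{n}u_k$ and $V_n:=\sum_{k=0}^{n}v_k$. Since $y\in T(u_0)$ and $-v_k\in T(u_{k+1})$, repeated application of superadditivity of $T$ gives $y-V_{n-1}\in T(U_n)$; since $v_k\in S(u_k)$, superadditivity of $S$ gives $V_n\in S(U_n)$. Now $U_n\to x$, $V_{n-1}\to v$ and $V_n\to v$, so the closedness of the graphs of $T$ and of $S$ yields $y-v\in T(x)$ and $v\in S(x)$, whence $y=(y-v)+v\in T(x)+S(x)=(S+T)(x)$. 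I note that the hypothesis that $(S+T)(x)$ be closed is not actually used in this argument for the range statement or the norm estimate; it is the property one would additionally invoke to conclude that $S+T$ is itself a convex process with closed graph (so that the lemma can be iterated).

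The step I expect to be the main obstacle is not conceptual but a matter of careful bookkeeping: the infimum defining the norm of a sublinear mapping (in \eqref{eq;dn}) need not be attained, so each choice of $u_k$ and $v_k$ can only be made approximately, and the accumulated errors must be chosen summable and small enough that the geometric series still delivers exactly the constant $\frac{M}{1-q}$ in the limit $\varepsilon\downarrow 0$. Two minor points also need attention: the degenerate values ($y=0$, or some $u_k=0$ or $v_k=0$), handled by $0\in T(0)$ and $0\in S(0)$ from \eqref{eq:dsblm}; and the fact that superadditivity yields only an inclusion $T(U_n)\supseteq\sum_k T(u_k)$, which is exactly what is needed here because we are producing a single point of $T(U_n)$ rather than describing that image.
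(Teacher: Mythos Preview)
Your argument is correct. The paper does not give a proof at all: it simply cites Theorem~5 of Robinson~\cite{Rob1972} and appeals to Lemma~\ref{L:Rocka1}. What you have written is, in effect, a self-contained reconstruction of Robinson's perturbation theorem for convex processes via a Neumann-type successive approximation, using superadditivity in place of linearity and the closed-graph property in place of continuity to pass to the limit. So the route is not genuinely different from the paper's; you have supplied the proof of the result the paper invokes as a black box. The bookkeeping you flag (approximate selections because the infima in \eqref{eq;dn} need not be attained, handling of zero arguments via $0\in T(0)$, $0\in S(0)$) is exactly what Robinson's proof also has to manage, and your treatment of it is sound. Your observation that the closedness of $(S+T)(x)$ is not actually needed for the surjectivity and norm estimate is correct as well; that hypothesis serves only to make $S+T$ again a closed-graph convex process, which is how the lemma is later applied in Proposition~\ref{wdns}.
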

\begin{proof}
These results follows from Theorem~5 of \cite{Rob1972} by taking in account  Lemma~\ref{L:Rocka1}.
\end{proof}
\begin{lemma}\label{LiNg}
Let  $G:[0,1] \to  \banachb$ and  $g:[0,1] \to \mathbb{R}$ be continuous function and $ \banachc$ a reflexive Banach space. Suppose that  $U: \banachb \rightrightarrows  \banachc$ is a sublinear mapping with closed graphic such that $\mbox{dom\,} U \supseteq \mbox{rge\,}G$. If 
$$
\|UG(\tau)\|\leqslant g(\tau), \qquad \tau\in [0,1],
$$
then   there hold
$$
U\int_{0}^{1}G(\tau)d\tau\neq \varnothing, \qquad \qquad \left\|U\int_{0}^{1}G(\tau)d\tau \right\|\leqslant \int_{0}^{1}g(\tau)d\tau.
$$
\end{lemma}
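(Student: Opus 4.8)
The plan is to reduce the statement to a Riemann-sum approximation of the vector integral and then to pass to a limit using the reflexivity of $\banachc$. First I would recall that, since $G$ is continuous on the compact interval $[0,1]$ and $\banachb$ is complete, the integral $y:=\int_{0}^{1}G(\tau)\,d\tau$ exists as a limit of Riemann sums. For each $n\in\mathbb{N}$ I would fix a partition $0=\tau_0<\tau_1<\cdots<\tau_{m_n}=1$ of $[0,1]$ of mesh at most $1/n$, write $\Delta_i:=\tau_i-\tau_{i-1}>0$, and set $y_n:=\sum_{i}\Delta_i G(\tau_i)$. Because $\mbox{dom\,}U\supseteq\mbox{rge\,}G$, each set $U(G(\tau_i))$ is nonempty and, by hypothesis, $\|U G(\tau_i)\|\leqslant g(\tau_i)<+\infty$; hence, by the definition of the norm in \eqref{eq;dn}, I can choose $v_i\in U(G(\tau_i))$ with $\|v_i\|\leqslant g(\tau_i)+1/n$.

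Next I would exploit the sublinearity of $U$. From the properties $\lambda U(x)=U(\lambda x)$ for $\lambda>0$ and $U(x)+U(x')\subseteq U(x+x')$ in \eqref{eq:dsblm}, a straightforward induction gives $\sum_i\Delta_i U(G(\tau_i))\subseteq U\big(\sum_i\Delta_i G(\tau_i)\big)=U(y_n)$; therefore $w_n:=\sum_i\Delta_i v_i\in U(y_n)$ and $\|w_n\|\leqslant\sum_i\Delta_i\big(g(\tau_i)+1/n\big)=\sum_i\Delta_i g(\tau_i)+1/n$. Letting $n\to\infty$, the mesh tends to $0$, so $y_n\to y$ in $\banachb$ and $\sum_i\Delta_i g(\tau_i)\to\int_{0}^{1}g(\tau)\,d\tau$; in particular $\limsup_{n}\|w_n\|\leqslant\int_{0}^{1}g(\tau)\,d\tau$, so $\{w_n\}$ is bounded in $\banachc$.

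Now comes the step I expect to be the crux: the pairs $(y_n,w_n)$ lie in $\mbox{graph\,}U$, which is closed only for the norm topology, so one cannot conclude directly from a merely bounded sequence. Here I would use that $\banachc$ is reflexive: after passing to a subsequence, $w_n$ converges weakly to some $w\in\banachc$, and $\|w\|\leqslant\liminf_{n}\|w_n\|\leqslant\int_{0}^{1}g(\tau)\,d\tau$ by weak lower semicontinuity of the norm. Since $\mbox{graph\,}U$ is a closed convex cone in $\banachb\times\banachc$, it is weakly closed (a closed convex set is weakly closed); as $y_n\to y$ strongly and $w_n\rightharpoonup w$ weakly, the pair $(y_n,w_n)$ converges weakly to $(y,w)$ in $\banachb\times\banachc$, whence $(y,w)\in\mbox{graph\,}U$, i.e. $w\in U(y)=U\!\int_{0}^{1}G(\tau)\,d\tau$. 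This shows $U\!\int_{0}^{1}G(\tau)\,d\tau\neq\varnothing$, and from $w\in U(y)$ together with \eqref{eq;dn} we obtain $\big\|U\!\int_{0}^{1}G(\tau)\,d\tau\big\|\leqslant\|w\|\leqslant\int_{0}^{1}g(\tau)\,d\tau$. The remaining ingredients — existence of the Riemann integral, the induction on the sublinearity axioms, and the bookkeeping with the infimum in the definition of $\|\cdot\|$ for sublinear maps — are routine.
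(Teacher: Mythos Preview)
Your argument is correct. The paper does not give its own proof of this lemma; it simply refers the reader to Lemma~2.1 of \cite{LiNg2012}. The route you outline---approximating the Bochner integral by Riemann sums, using the axioms in \eqref{eq:dsblm} to place the finite sums inside $U$, extracting a weakly convergent subsequence of the $w_n$ via reflexivity of $\banachc$, and then passing to the limit through the weak closedness of the convex, norm-closed graph of $U$---is the standard way such results are established, so there is nothing to compare beyond noting that you have supplied the details the paper outsources.
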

\begin{proof}
see Lemma 2.1 of  \cite{LiNg2012}.
\end{proof}
Let $\Omega\subseteq \banacha$ be an open set and $F:{\Omega}\to \banachb$  a continuously Fr\'echet differentiable function.   The   linear map $F'(x):\banacha \to \banachb$ denotes the  Fr\'echet derivative of $F:{\Omega}\to \banachb$ at $x\in \Omega$.  Let  $C \subset \banachb $ be  a  nonempty closed convex cone,  $z\in\Omega$ and   $T_z: \banacha \rightrightarrows  \banachb $ a  mapping  defined  as
\begin{equation}\label{ro2}
T_{z}d=F'(z)d-C.
\end{equation}
It is well known that the mappings  $T_{z}$  and  $T^{-1}_{z}$ are sublinear  with closed graphic, $ \mbox{dom\,}T_z=X$, $\|T_z\|<+\infty$ and $\mbox{rge\,}T=Y$ if and only if $ \|T^{-1}_{z}\|<+\infty$ (see Lemma~\ref{L:Rocka1} above and  Corollary 4A.7, Corollary 5C.2 and Example 5C.4 of \cite{DR2009} ). Note that 
 \begin{equation}\label{ro3}
 T^{-1}_zy:=\{d\in \banacha~:~ F'(z)d-y\in C\}, \quad  \forall~ z\in  \Omega, ~ \forall~ y\in \banachb.
\end{equation}
\begin{lemma} \label{l:incltr}
Let $\banacha$ and $\banachb$ be  Banach spaces. $\Omega\subseteq \banacha$  an open set and $F:{\Omega}\to \banachb$  a continuously Fr\'echet differentiable function.  Then the following inclusion holds
$$
T_{z}^{-1}F'(v)T_{v}^{-1}w \subseteq T_{z}^{-1}w, \qquad \forall ~ v, z\in  \Omega, ~ \forall~ w\in \banachb. 
$$
As a consequence, 
$$
 \left\|T_{z}^{-1}\left[ F'(y)-F'(x)\right]\right\|\leq \left\|  T_{z}^{-1}F'(v)T_{v}^{-1}\left[ F'(y)-F'(x)\right] \right\|, \qquad \forall ~ v, x, y, z\in  \Omega.
$$
\end{lemma}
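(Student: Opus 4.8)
The plan is to derive the displayed inclusion directly from the definitions of the inverse of a sublinear mapping, of composition of sublinear mappings, and from the fact that the convex cone $C$ is stable under addition; the norm estimate is then an immediate consequence of the monotonicity of the quantity $\|Td\|$ with respect to inclusion of the values of a sublinear mapping.

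First I would take an arbitrary $e\in T_z^{-1}F'(v)T_v^{-1}w$. Unwinding the composition $UT(x):=\bigcup\{U(y):y\in T(x)\}$, with $F'(v)$ regarded as a single-valued linear map, this means precisely that there is $d\in T_v^{-1}w$ with $e\in T_z^{-1}\bigl(F'(v)d\bigr)$. By \eqref{ro3} the first membership reads $F'(v)d-w\in C$ and the second reads $F'(z)e-F'(v)d\in C$. Since $C$ is a convex cone it is closed under addition, so
$$
F'(z)e-w=\bigl(F'(z)e-F'(v)d\bigr)+\bigl(F'(v)d-w\bigr)\in C,
$$
which by \eqref{ro3} is exactly the statement $e\in T_z^{-1}w$. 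As $e$, $v$, $z$ and $w$ were arbitrary, this establishes the first assertion.

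For the consequence, fix $v,x,y,z\in\Omega$ and let $d\in\banacha$. Applying the inclusion just proved with $w:=[F'(y)-F'(x)]d$, and using that the composition $T_v^{-1}[F'(y)-F'(x)]$ evaluated at $d$ equals $T_v^{-1}\bigl([F'(y)-F'(x)]d\bigr)$ because $F'(y)-F'(x)$ is single-valued, I obtain
$$
\bigl(T_z^{-1}F'(v)T_v^{-1}[F'(y)-F'(x)]\bigr)(d)\subseteq\bigl(T_z^{-1}[F'(y)-F'(x)]\bigr)(d).
$$
Since $A\subseteq B$ in $\banacha$ forces $\inf\{\|a\|:a\in A\}\geqslant\inf\{\|b\|:b\in B\}$ — consistent with the convention $\|Td\|=+\infty$ when $Td=\varnothing$ — the value at $d$ of the left-hand multifunction dominates that of the right-hand one; taking the supremum over $\|d\|\leqslant 1$ in the definition \eqref{eq;dn} of the sublinear norm yields the claimed inequality $\|T_z^{-1}[F'(y)-F'(x)]\|\leqslant\|T_z^{-1}F'(v)T_v^{-1}[F'(y)-F'(x)]\|$.

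The whole argument is essentially bookkeeping: no completeness, continuity, or finiteness of any norm is invoked. The only points requiring a little care are parsing the triple composition correctly — keeping in mind that $F'(v)$ and $F'(y)-F'(x)$ act as single-valued linear maps while $T_v^{-1}$ and $T_z^{-1}$ are genuinely set-valued — and invoking the cone property $C+C\subseteq C$ in the single displayed computation; these, rather than any analytic subtlety, are where the proof could slip.
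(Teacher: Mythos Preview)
Your proof is correct and follows essentially the same approach as the paper: unwind the composition, use that $d\in T_v^{-1}w$ means $F'(v)d-w\in C$, and add the two cone elements to conclude $e\in T_z^{-1}w$; then pass to the pointwise norm inequality and take the supremum. Your element-chasing presentation is in fact slightly cleaner than the paper's, which asserts the equality $T_z^{-1}F'(v)d = T_z^{-1}w$ for $d\in T_v^{-1}w$ where only the inclusion $\subseteq$ actually holds (and only the inclusion is needed).
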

\begin{proof}
If $T_{v}^{-1}w=\varnothing$ for some $v \in \Omega$ and  $w\in \banachb$   then the inclusion hods trivially. Assume that  $T_{v}^{-1}w\neq\varnothing$, i.e.,  there exists $d\in T_{v}^{-1}w$ for each $v \in \Omega$ and  $w\in \banachb$.  Note that  definition in  \eqref{ro3} and simples algebraic manipulation gives 
$$
T_{z}^{-1}F'(v)d=\left\{ u\in \banacha~: ~  F'(z)u -w\in C+[ F'(v)d-w] \right\}, \qquad  \forall ~ d\in \banacha, ~ \forall~   w\in \banachb. 
$$
Defition in  \eqref{ro3} implies that,  for each $v \in \Omega$,   $w\in \banachb$ and   $d\in T_{v}^{-1}w$ there holds $F'(v)d-w \in C$.  Thus,  the last equality and   \eqref{ro3} imply that 
$$
T_{z}^{-1}F'(v)d = T_{z}^{-1}w, \qquad  \forall~w\in \banachb, \quad  \forall~ d\in T_{v}^{-1}w, 
$$
which implies the desired inclusion.   To end the proof, note that  the first part of the lemma implies
$$
\left\|T_{z}^{-1}\left[ F'(y)-F'(x)\right]u\right\|\leq \left\|  T_{z}^{-1}F'(v)T_{v}^{-1}\left[ F'(y)-F'(x)\right] u\right\|, \qquad \forall ~ v, x, y, z\in  \Omega,  \quad \forall ~ u\in\banachb. 
$$
Hence,  the inequality of the lemma  follows from the definition of the nom in \eqref{eq;dn}.
\end{proof}
\section{Semi-local analysis  for  Newton's method } \label{lkant}
Our goal is to state and prove a robust semi-local affine invariant theorem for Newton's method to solve nonlinear inclusion  of the form
\begin{equation} \label{eq:Inc}
F(x) \in C, 
\end{equation}
where $F:{\Omega}\to \banachb$ is a  nonlinear continuously   differentiable function,   $\banacha$  and $\banachb$ are Banach spaces and  $\banacha$ is reflexive, $\Omega\subseteq \banacha$ an open set and  $C\subset \banachb $ a nonempty closed convex cone.   For state the theorem we need some definitions. 

  A  nonlinear continuously  Fr\'echet differentiable function $F:{\Omega}\to \banachb$ satisfies the {\it Robson's  Condition} at $\tilde{x}\in \Omega$ if 
\begin{equation} \label{eq:RobCond}
\mbox{rge\,}T_{\tilde{x}}=\banachb, 
\end{equation}
where $T_{\tilde{x}}: \banacha \rightrightarrows  \banachb $  is a sublinear   mapping  as defined in \eqref{ro2}.

Let    $\banacha$  and $\banachb$   be a  Banach spaces,  $\Omega\subseteq \banacha$ an open set and  $R>0$ a scalar constant. A  continuously differentiable scalar function $f:[0,R)\to \mathbb{R}$ is a {\it majorant function}   at a point  $\tilde{x}\in \Omega$ for   a   continuously   differentiable function  $F:{\Omega}\to \banachb$     if 
  \begin{equation}\label{eq:MCAI}
   B(\tilde{x},R)\subseteq \Omega, \qquad \qquad  \left\|T_{\tilde{x}}^{-1}\left[F'(y)-F'(x)\right]\right\| \leqslant  f'(\|y-x\|+\|x-\tilde{x}\|)-f'(\|x-\tilde{x}\|),
  \end{equation}
  for all $x,y\in B(\tilde{x},R)$ such that  $\|x-\tilde{x}\|+\|y-x\|< R$ and satisfies the  following conditions:
  \begin{itemize}
  \item[{\bf h1)}]  $f(0)>0$,   $f'(0)=-1$;
  \item[{\bf h2)}]  $f'$ is convex and strictly increasing;
  \item[{\bf h3)}]  $f(t)=0$ for some $t\in (0,R)$.
  \end{itemize}
We also need of the following  condition on the majorant condition $f$  which will be considered to hold
only when explicitly stated
  \begin{itemize}
  \item[{\bf h4)}]  $f(t)<0$ for some $t\in (0,R)$.
  \end{itemize}
  \begin{remark}
  Since $f(0)>0$ and $f$ is continuous then   condition {\bf h4} implies condition {\bf h3}.
  \end{remark}
  \begin{theorem}\label{th:knt1}
Let $\banacha$, $\banachb$ be Banach spaces  and $\banacha$ reflexive, $\Omega\subseteq \banacha$ an open set,    $F:{\Omega}\to \banachb$  a continuously Fr\'echet differentiable function,   $C\subset \banachb $ a nonempty closed convex cone, $R>0$ and $ f:[0,R)\to \mathbb{R}$ a continuously differentiable function.   Suppose that $\tilde{x}\in \Omega$, $F$ satisfies  the Robson's Condition at  $\tilde{x}$,    $f$ is a majorant function for $F$ at  $\tilde{x}$ and
\begin{equation} \label{KH}
   \left \|T_{\tilde{x}}^{-1}(-F(\tilde{x}))\right\|\leqslant f(0)\,.
\end{equation}
  Then $f$ has a smallest zero $t_*\in (0,R)$, the sequences  generated by Newton's Method for solving   the inclusion 
  $
  F(x)\in C
  $ and  the equation  $f(t)=0$,  with starting
  point  $x_0=\tilde{x}$ and $t_0=0$, respectively,
  \begin{align}   \label{ns.KT}
    x_{k+1}\in x_k  +  \mbox{argmin} \left\{ \|d\|: \,  F(x_k)+F'(x_k)d \in C \right\},  \quad    t_{k+1} ={t_k}- \frac{f(t_k)}{f'(t_k)}, \qquad  k=0,1,\ldots\,.
  \end{align}  are well defined, $\{x_k\}$ is contained in $B(\tilde{x},  t_*)$, $\{t_k\}$ is strictly increasing, is contained in   $[0,t_*)$ and  converge  to  $t_*$ and   satisfy  the inequalities
\begin{equation}\label{eq:bd1}
  \|x_{k+1}-x_{k}\|   \leq  t_{k+1}-t_{k} , \qquad \qquad \|x_{k+1}-x_{k}\|\leq   \frac{t_{k+1}-t_{k}}{(t_{k}-t_{k-1})^2} \|x_k-x_{k-1}\|^2,
  \end{equation}
 for all \( k=0, 1, \ldots\, , \) and \( k=1,2, \ldots\, \), respectively. Moreover,    $\{x_k\}$ converge  to  $x_*\in B[\tilde{x}, t_*]$ such that  $F(x_*)\in C,$  
\begin{equation}\label{eq:lc1}
 \|x_*-x_{k}\|   \leq  t_*-t_{k}, \qquad \qquad t_*-t_{k+1}\leq \frac{1}{2}( t_*-t_{k}), \qquad k=0,1, \ldots\,
\end{equation}
and, therefore,    $\{t_{k}\}$  converges $Q$-linearly to $t_*$ and   $\{x_k\}$   converge $R$-linearly to $x_*$. If, additionally, $f$ satisfies {\bf h4} then the following inequalities hold:
\begin{equation}\label{eq:qcct}
\|x_{k+1}-x_{k}\|\leq    \frac{D^-f'(t_*)}{-2f'(t_*)}  \|x_k-x_{k-1}\|^2,  \qquad t_{k+1}-t_{k} \leq \frac{D^-f'(t_*)}{-2f'(t_*)} ({t_k}-t_{k-1})^2, \qquad k=1,2, \ldots\,, 
  \end{equation}
and, as  a consequence,    $\{x_k\}$ and  $\{t_k\}$  converge $Q$-quadratically
 to $x_*$ and $t_*$, respectively, as follow
 \begin{equation}\label{eq:qcs}
\limsup_{k\to \infty}\frac{\|x_{*}-x_{k+1}\|} {\|x_*-x_{k}\|^2}\leq   \frac{D^-f'(t_*)}{-2f'(t_*)}, \qquad  \qquad t_{*}-t_{k+1} \leq \frac{D^-f'(t_*)}{-2f'(t_*)} ({t_*}-t_{k})^2,  \quad k=0,1, \ldots\,.
  \end{equation}
\end{theorem}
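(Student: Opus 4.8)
The plan is to run the classical Newton--Kantorovich majorant argument in the set-valued setting, following the technique of \cite{FS2002} and its affine-invariant refinement in \cite{FS09}, with the sublinear-mapping machinery (Lemmas~\ref{L:Banach}, \ref{LiNg}, \ref{l:incltr}) replacing the usual Banach lemma on inverses. First I would record the real-variable facts about $f$: by {\bf h1}--{\bf h3}, since $f(0)>0$, $f'(0)=-1$ and $f'$ is strictly increasing and convex, $f$ is strictly convex, strictly decreasing near $0$, and has a smallest zero $t_*\in(0,R)$ with $f'(t_*)\le 0$; the scalar Newton iteration $t_{k+1}=t_k-f(t_k)/f'(t_k)$ from $t_0=0$ is well defined, strictly increasing, stays in $[0,t_*)$, converges to $t_*$, and $f(t_k)\ge 0$, $f'(t_k)<0$ throughout. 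I would also isolate the quantity $e_f(t):=(t_*-t)\mathrm{D}^-f'(t_*)/(-2f'(t_*))$-type bounds and the halving estimate $t_*-t_{k+1}\le \tfrac12(t_*-t_k)$, and — under {\bf h4}, which forces $f'(t_*)<0$ strictly — the quadratic bound $t_{k+1}-t_k\le \frac{\mathrm{D}^-f'(t_*)}{-2f'(t_*)}(t_k-t_{k-1})^2$; these are exactly the estimates proved in Section~\ref{sec:PR} for the majorant function, so I would simply cite them. The final $Q$-quadratic conclusions for $\{t_k\}$ and (via Proposition~\ref{pr:qc}) for $\{x_k\}$ then follow once \eqref{eq:bd1} and \eqref{eq:qcct} are in place.

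The core is an induction showing simultaneously that $x_k\in B(\tilde x,t_*)$, that the Newton iteration multifunction in \eqref{ns.KT} is well defined at $x_k$ (i.e. the argmin is attained and nonempty), and that the two inequalities in \eqref{eq:bd1} hold. For the \emph{well-definedness} step I would use the majorant condition \eqref{eq:MCAI} to control $\|T_{\tilde x}^{-1}[F'(x_k)-F'(\tilde x)]\|\le f'(\|x_k-\tilde x\|)-f'(0)=f'(\|x_k-\tilde x\|)+1$, then write $T_{x_k}=T_{\tilde x}+(T_{x_k}-T_{\tilde x})$ where $(T_{x_k}-T_{\tilde x})d=[F'(x_k)-F'(\tilde x)]d$, observe $\mathrm{dom}=\banacha$ for both, and apply Lemma~\ref{L:Banach} with $S=T_{x_k}-T_{\tilde x}$, $T=T_{\tilde x}$: since $\|x_k-\tilde x\|<t_*\le$ (the zero), $f'(\|x_k-\tilde x\|)<0$, so $\|T_{\tilde x}^{-1}\|\,\|S\|\le f'(\|x_k-\tilde x\|)+1<1$, giving $\mathrm{rge\,}T_{x_k}=\banachb$ and the bound
\begin{equation*}
\|T_{x_k}^{-1}\|\le \frac{\|T_{\tilde x}^{-1}\|}{1-(f'(\|x_k-\tilde x\|)+1)}=\frac{\|T_{\tilde x}^{-1}\|}{-f'(\|x_k-\tilde x\|)}.
\end{equation*}
Since $\|T_{\tilde x}^{-1}(-F(\tilde x))\|\le f(0)$ is assumed, one needs the analogous affine bound $\|T_{x_k}^{-1}(-F(x_k))\|\le$ something controlled by $f$; here Lemma~\ref{LiNg} (reflexivity of $\banacha$ is used) applied to the integral representation $F(x_k)=F(\tilde x)+\int_0^1 F'(\tilde x+\tau(x_k-\tilde x))(x_k-\tilde x)\,d\tau$ together with Lemma~\ref{l:incltr} transfers the $T_{\tilde x}^{-1}$-bound into a $T_{x_k}^{-1}$-bound, and the argmin is then attained by reflexivity (weak compactness of balls). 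The Newton step $\|x_{k+1}-x_k\|=\|T_{x_k}^{-1}(-F(x_k))\|$ is thereby bounded by $-f(t_k)/f'(t_k)=t_{k+1}-t_k$ once $\|x_k-\tilde x\|\le t_k$, which is maintained by the triangle inequality $\|x_{k+1}-\tilde x\|\le\sum_{j\le k}(t_{j+1}-t_j)=t_{k+1}<t_*$.

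For the \emph{second} inequality in \eqref{eq:bd1} (and then \eqref{eq:qcct}) I would estimate the residual: from $F(x_k)+F'(x_{k-1})(x_k-x_{k-1})\in C$ (the defining inclusion at step $k-1$) and $F(x_k)=F(x_{k-1})+F'(x_{k-1})(x_k-x_{k-1})+\int_0^1[F'(x_{k-1}+\tau(x_k-x_{k-1}))-F'(x_{k-1})](x_k-x_{k-1})\,d\tau$, one gets $-F(x_k)\in -C-\int_0^1[\cdots]d\tau$, hence $\|T_{x_k}^{-1}(-F(x_k))\|$ is bounded, via Lemmas~\ref{LiNg} and \ref{l:incltr} and the majorant condition \eqref{eq:MCAI}, by $\frac{1}{-f'(t_k)}\int_0^1[f'(t_{k-1}+\tau(t_k-t_{k-1}))-f'(t_{k-1})]\,d\tau\cdot\frac{\|x_k-x_{k-1}\|^2}{(t_k-t_{k-1})^2}$, and the scalar quantity equals $\frac{t_{k+1}-t_k}{(t_k-t_{k-1})^2}$ by the same computation for $f$; under {\bf h4} the convexity/monotonicity estimate from Section~\ref{sec:PR} replaces $\frac{t_{k+1}-t_k}{(t_k-t_{k-1})^2}$ by the uniform constant $\frac{\mathrm{D}^-f'(t_*)}{-2f'(t_*)}$. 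Finally, $\{x_k\}$ is Cauchy because $\sum(t_{k+1}-t_k)=t_*<\infty$, so it converges to some $x_*\in B[\tilde x,t_*]$ with $\|x_*-x_k\|\le t_*-t_k$; passing to the limit in $F(x_k)+F'(x_k)(x_{k+1}-x_k)\in C$ using $x_{k+1}-x_k\to 0$, continuity of $F$ and $F'$, and closedness of $C$ yields $F(x_*)\in C$. The $R$-linear and $R$-quadratic rates for $\{x_k\}$ follow from the corresponding rates for $t_*-t_k$ and $\|x_*-x_k\|\le t_*-t_k$, and Proposition~\ref{pr:qc} upgrades \eqref{eq:qcct} to \eqref{eq:qcs}.

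The main obstacle, I expect, is the well-definedness/solvability step: showing that at each $x_k$ the linearized inclusion $F(x_k)+F'(x_k)d\in C$ is solvable with the argmin attained, \emph{and} with a norm bound expressed through $f$. This is where all three sublinear-mapping lemmas must be combined carefully — Lemma~\ref{L:Banach} to get $\mathrm{rge\,}T_{x_k}=\banachb$ and invert, Lemma~\ref{l:incltr} to relate $T_{x_k}^{-1}$ to $T_{\tilde x}^{-1}$ so that the majorant hypothesis \eqref{eq:MCAI} (stated only for $T_{\tilde x}^{-1}$) is usable, and Lemma~\ref{LiNg} (hence reflexivity of $\banacha$) both to integrate the derivative bound and to guarantee the minimizer exists. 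Keeping the affine invariance — everything measured through $T_{\tilde x}^{-1}$, never through $F'$ directly — while passing between iterates is the delicate bookkeeping.
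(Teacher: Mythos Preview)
Your overall architecture matches the paper's: the scalar facts for $\{t_k\}$ are quoted from Section~\ref{sec:PR}; the induction $x_k\in K(t_k)$ (implicitly, via $\|x_k-\tilde x\|\le t_k$ and $\|T_{x_k}^{-1}(-F(x_k))\|\le -f(t_k)/f'(t_k)$) drives the step bound \eqref{eq:bd1}; the linearization-error estimate together with Lemma~\ref{l:incltr} yields the quadratic inequality; and the Cauchy argument plus closedness of $C$ gives $F(x_*)\in C$. All of that is correct and essentially what the paper does in Lemmas~\ref{l:iset1}--\ref{NfNF} and the proof of Theorem~\ref{th:knt1}.

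The genuine gap is precisely where you suspected: the invertibility step. You propose to apply Lemma~\ref{L:Banach} with $T=T_{\tilde x}$ and $S=F'(x_k)-F'(\tilde x)$, and you write $\|T_{\tilde x}^{-1}\|\,\|S\|\le f'(\|x_k-\tilde x\|)+1$. But the majorant condition \eqref{eq:MCAI} only bounds the \emph{composite} norm $\|T_{\tilde x}^{-1}S\|$, not the \emph{product} $\|T_{\tilde x}^{-1}\|\cdot\|S\|$; the two quantities $\|T_{\tilde x}^{-1}\|$ and $\|S\|=\|F'(x_k)-F'(\tilde x)\|$ separately are not affine-invariant and are not controlled by the hypotheses. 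So the hypothesis of Lemma~\ref{L:Banach} as stated is not met, and even if it were, the conclusion would carry an uncontrolled factor $\|T_{\tilde x}^{-1}\|$ in the numerator, which is not what the downstream estimates need.

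The paper's fix (Proposition~\ref{wdns}) is to move the Banach-lemma argument from $\banachb$ to $\banacha$: set $S:=T_{\tilde x}^{-1}[F'(x)-F'(\tilde x)]$ and apply Lemma~\ref{L:Banach} to $S+I$ with $T=I$, so that $\|T^{-1}\|\,\|S\|=\|S\|\le f'(\|x-\tilde x\|)+1<1$ is exactly the majorant bound. This yields $\|(S+I)^{-1}\|\le -1/f'(t)$. Since $0\in C$ one has $T_{\tilde x}^{-1}F'(x)\supseteq S+I$, hence $\|(T_{\tilde x}^{-1}F'(x))^{-1}\|\le -1/f'(t)$; an elementary set identity then gives $-(T_{\tilde x}^{-1}F'(x))^{-1}=T_{x}^{-1}(-F'(\tilde x))$, so $\|T_x^{-1}F'(\tilde x)\|\le -1/f'(t)$. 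That is the correct affine-invariant quantity, and it is precisely the factor you later plug into Lemma~\ref{l:incltr} to get the $\tfrac{1}{-f'(t_k)}$ coefficient in your residual bound. Once you replace your application of Lemma~\ref{L:Banach} by this argument, the rest of your proof goes through as written.
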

  We will  use the above result to  prove a robust semi-local affine invariant theorem for Newton's method for solving nonlinear inclusion  of the form \eqref{eq:Inc}.  The statement of the  theorem~is:
 \begin{theorem}\label{th:knt2}
Let $\banacha$, $\banachb$ be Banach spaces  and $\banacha$ reflexive, $\Omega\subseteq \banacha$ an open set,    $F:{\Omega}\to \banachb$  a continuously Fr\'echet differentiable function,   $C\subset \banachb $ a nonempty closed convex cone, $R>0$ and $ f:[0,R)\to \mathbb{R}$ a continuously differentiable function.   Suppose that $\tilde{x}\in \Omega$, $F$ satisfies  the Robson's Condition at  $\tilde{x}$,    $f$ is a majorant function for $F$ at  $\tilde{x}$  satisfying  {\bf h4} and
 \begin{equation}
    \label{KH.2}
   \left \|T_{\tilde{x}}^{-1}(-F(\tilde{x}))\right\|\leqslant f(0)\,.
  \end{equation}
Define   $\beta:=\sup\{ -f(t) ~:~ t\in[0,R)  \}$. Let $0\leq \rho<  \beta/2$  and   $g:[0,R-\rho)\to \mathbb{R}$, 
\begin{equation} \label{eq:maj2}
  g(t):=\frac{-1}{f'(\rho)}[f(t+\rho)+2\rho].
\end{equation}
Then $g$ has a smallest zero $t_{*,\rho}\in (0,R-\rho)$, the sequences  generated by Newton's Method for solving   the inclusion 
  $
  F(x)\in C
  $ and  the equation  $g(t)=0$,  with starting  point $x_0=\hat{x}$ for any $\hat{x}\in  B(\tilde{x},  \rho)$ and $t_0=0$, respectively,
  \begin{align}   \label{ns.KT2}
    x_{k+1}\in x_k  +  \mbox{argmin} \left\{ \|d\|: \,  F(x_k)+F'(x_k)d \in C \right\},  \quad    t_{k+1} ={t_k}- \frac{g(t_k)}{g'(t_k)}, \qquad  k=0,1,\ldots\,.
  \end{align}  are well defined, $\{x_k\}$ is contained in $B(\tilde{x},  t_{*,\rho})$, $\{t_k\}$ is strictly increasing, is contained in   $[0,t_{*,\rho})$ and  converge  to  $t_{*,\rho}$ and   satisfy  the inequalities
\begin{equation}\label{eq:bdct2}
  \|x_{k+1}-x_{k}\|   \leq  t_{k+1}-t_{k}, \qquad k=0, 1, \ldots\, , 
  \end{equation}
  \begin{equation}\label{eq:qbdct2}
 \|x_{k+1}-x_{k}\|\leq   \frac{t_{k+1}-t_{k}}{(t_{k}-t_{k-1})^2} \|x_k-x_{k-1}\|^2\leq  \frac{D^-g'(t_{*,\rho})}{-2g'(t_{*,\rho})}  \|x_k-x_{k-1}\|^2,  \qquad k=1, 2, \ldots\,
  \end{equation}
   Moreover,    $\{x_k\}$ converge  to  $x_*\in B[\tilde{x}, t_{*,\rho}]$ such that  $F(x_*)\in C$,  satisfies the inequalities
\begin{equation}\label{eq:lcr2}
 \|x_*-x_{k}\|   \leq  t_{*,\rho}-t_{k}, \qquad \qquad t_{*,\rho}-t_{k+1}\leq \frac{1}{2}( t_{*,\rho}-t_{k}), \qquad k=0,1, \ldots\,
\end{equation}
and the convergence of $\{x_k\}$ and  $\{t_k\}$ to  $x_*$ and $t_{*,\rho}$, respectively,  are  $Q$-quadratic as follows
 \begin{equation}\label{eq:qcrt2}
\limsup_{k\to \infty}\frac{\|x_{*}-x_{k+1}\|} {\|x_*-x_{k}\|^2}\leq   \frac{D^-g'(t_{*,\rho})}{-2g'(t_{*,\rho})},   \qquad t_{*,\rho}-t_{k+1} \leq \frac{D^-g'(t_{*,\rho})}{-2g'(t_{*,\rho})} ({t_{*,\rho}}-t_{k})^2,  \quad k=0,1, \ldots\,.
  \end{equation}
\end{theorem}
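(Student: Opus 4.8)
\emph{Overall approach.} The statement will be obtained as a corollary of Theorem~\ref{th:knt1}. Fixing $\hat x\in B(\tilde x,\rho)$, the plan is to verify three things: (i) $g$ in \eqref{eq:maj2} is a majorant function for $F$ at $\hat x$ on $[0,R-\rho)$ satisfying \textbf{h1}--\textbf{h4}; (ii) $F$ satisfies Robson's condition at $\hat x$; (iii) $\|T_{\hat x}^{-1}(-F(\hat x))\|\le g(0)$. Granting these, Theorem~\ref{th:knt1} applies with $\tilde x,R,f,t_*$ replaced by $\hat x,R-\rho,g,t_{*,\rho}$. Since the Newton multifunction in \eqref{ns.KT2} is independent of the majorant function, the sequence $\{x_k\}$ there is exactly the one Theorem~\ref{th:knt1} governs, $t_{*,\rho}\in(0,R-\rho)$ is the smallest zero of $g$, and the estimates \eqref{eq:bdct2}--\eqref{eq:qcrt2}, together with the localization of $\{x_k\}$ and its convergence to a solution $x_*$ of $F(x)\in C$, are precisely \eqref{eq:bd1}, \eqref{eq:lc1}, \eqref{eq:qcct}, \eqref{eq:qcs} of Theorem~\ref{th:knt1} read off for $g$, $t_{*,\rho}$ and base point $\hat x$.

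\emph{Step 1: the majorant $g$ and its hypotheses.} From $f'(0)=-1$ and $f'$ strictly increasing one gets $f'(t)>-1$, hence $f(t)>f(0)-t>-t$ on $(0,R)$; so $\beta=\sup\{-f(t):t\in[0,R)\}\le R$, whence $R-\rho>R/2>0$, and $t\mapsto f(t)+t$, $t\mapsto f(t)+2t$ are increasing, giving $f(\rho)+2\rho\ge f(0)>0$ and $f(\rho)+\rho\ge f(0)$. Next, $\rho<\beta/2$ forces $f'(\rho)<0$: were $f'(\rho)\ge0$, $f$ would be nondecreasing on $[\rho,R)$, its infimum on $[0,R)$ attained at some $\tau^*\in(0,\rho]$, and then $\beta=-f(\tau^*)<\tau^*-f(0)<\rho$, contradicting $\beta>2\rho$. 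With $f'(\rho)<0$ fixed, $g(0)=-[f(\rho)+2\rho]/f'(\rho)>0$, $g'(0)=-1$, $g'(t)=-f'(t+\rho)/f'(\rho)$ is convex and strictly increasing, and $B(\hat x,R-\rho)\subseteq B(\tilde x,R)\subseteq\Omega$; choosing $t_1\in[0,R)$ with $-f(t_1)>2\rho$ (so $t_1>2\rho$, since $-f(t_1)<t_1$) gives $g(t_1-\rho)<0$ with $t_1-\rho\in(0,R-\rho)$. Thus $g$ satisfies \textbf{h1}--\textbf{h4}.

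\emph{Step 2 (the crux): the affine majorant inequality for $g$ at $\hat x$, and Robson's condition at $\hat x$.} Both reduce to the perturbation estimate $\mbox{rge}\,T_{\hat x}=\banachb$ and $\|T_{\hat x}^{-1}F'(\tilde x)\|\le 1/(-f'(\rho))$. I would derive it from the identity $T_{\hat x}^{-1}F'(\tilde x)(d)=d+T_{\hat x}^{-1}\!\big(-[F'(\hat x)-F'(\tilde x)]d\big)$ (read off \eqref{ro3}), Lemma~\ref{l:incltr} with $v=\tilde x$, Remark~\ref{r:pn}, and the bound $\|T_{\tilde x}^{-1}[F'(\hat x)-F'(\tilde x)]\|\le f'(\|\hat x-\tilde x\|)+1\le f'(\rho)+1<1$ coming from \eqref{eq:MCAI} at $\tilde x$ with $x=\tilde x,y=\hat x$: these combine to the self-improving inequality $\|T_{\hat x}^{-1}F'(\tilde x)\|\le 1+(f'(\rho)+1)\|T_{\hat x}^{-1}F'(\tilde x)\|$, whose resolution needs $\mbox{rge}\,T_{\hat x}=\banachb$ (equivalently $\|T_{\hat x}^{-1}\|<\infty$), itself supplied by a Banach-perturbation argument for $T_{\hat x}=T_{\tilde x}+(F'(\hat x)-F'(\tilde x))$. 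Granted the estimate, for $x,y\in B(\hat x,R-\rho)$ with $\|x-\hat x\|+\|y-x\|<R-\rho$, Lemma~\ref{l:incltr} ($v=\tilde x$) and \eqref{eq;pnor} give $\|T_{\hat x}^{-1}[F'(y)-F'(x)]\|\le\|T_{\hat x}^{-1}F'(\tilde x)\|\,\|T_{\tilde x}^{-1}[F'(y)-F'(x)]\|$; applying \eqref{eq:MCAI} at $\tilde x$ (licit since $\|x-\tilde x\|+\|y-x\|<\|x-\hat x\|+\rho+\|y-x\|<R$) and the convexity of $f'$ (Proposition~\ref{pr:conv.aux1}) to replace $\|x-\tilde x\|$ by $\|x-\hat x\|+\rho$ bounds the second factor by $-f'(\rho)\big[g'(\|x-\hat x\|+\|y-x\|)-g'(\|x-\hat x\|)\big]$, and multiplying by $\|T_{\hat x}^{-1}F'(\tilde x)\|\le 1/(-f'(\rho))$ yields \eqref{eq:MCAI} for $g$ at $\hat x$.

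\emph{Step 3: the Kantorovich-type condition, and the main obstacle.} Writing $-F(\hat x)=-F(\tilde x)-\int_0^1 F'(x_\tau)(\hat x-\tilde x)\,d\tau$ with $x_\tau=\tilde x+\tau(\hat x-\tilde x)$ and using sublinearity of $T_{\hat x}^{-1}$, I would bound $\|T_{\hat x}^{-1}(-F(\tilde x))\|\le f(0)/(-f'(\rho))$ via Lemma~\ref{l:incltr}, \eqref{eq;pnor}, the perturbation estimate and \eqref{KH.2}, and the integral term via Lemma~\ref{LiNg} (with $\banachc=\banacha$) after bounding $\|T_{\hat x}^{-1}(-F'(x_\tau)(\hat x-\tilde x))\|$ by splitting off $-F'(\hat x)(\hat x-\tilde x)$ (contributing $\le\|\hat x-\tilde x\|$, since $\tilde x-\hat x\in T_{\hat x}^{-1}(-F'(\hat x)(\hat x-\tilde x))$ because $0\in C$) and estimating $\|T_{\hat x}^{-1}[F'(\hat x)-F'(x_\tau)]\|$ through Lemma~\ref{l:incltr} and \eqref{eq:MCAI} at $\tilde x$. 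Integrating in $\tau$ and adding gives $\|T_{\hat x}^{-1}(-F(\hat x))\|\le\psi(\|\hat x-\tilde x\|)$, where $\psi(r)=r+[2f(0)+rf'(r)-f(r)]/(-f'(\rho))$ is increasing on $[0,\rho)$ (because $r\mapsto rf'(r)-f(r)$ is nondecreasing, again by convexity of $f'$); since $\psi(\rho)=[2f(0)-f(\rho)]/(-f'(\rho))\le g(0)=[f(\rho)+2\rho]/(-f'(\rho))$ is exactly the already-proved inequality $f(0)\le f(\rho)+\rho$, (iii) follows and Theorem~\ref{th:knt1} delivers the conclusion. I expect the \textbf{main obstacle} to be precisely the perturbation estimate $\|T_{\hat x}^{-1}F'(\tilde x)\|\le 1/(-f'(\rho))$ with $\mbox{rge}\,T_{\hat x}=\banachb$: Lemma~\ref{L:Banach} as stated requires $\|T_{\tilde x}^{-1}\|\,\|F'(\hat x)-F'(\tilde x)\|<1$, whereas the affine majorant condition only controls the composite $\|T_{\tilde x}^{-1}[F'(\hat x)-F'(\tilde x)]\|$, so the Banach-lemma fixed-point argument must be rerun with the sublinear mapping $T_{\tilde x}^{-1}[F'(\hat x)-F'(\tilde x)]$ in the role of $\|F'(\hat x)-F'(\tilde x)\|$; the remaining steps are routine manipulations with \eqref{eq;pnor}, Lemma~\ref{l:incltr}, Lemma~\ref{LiNg} and the convexity of $f'$.
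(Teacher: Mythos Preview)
Your proposal is correct and follows the same overall strategy as the paper: reduce to Theorem~\ref{th:knt1} by checking that $g$ is a majorant for $F$ at $\hat x$ satisfying \textbf{h1}--\textbf{h4}, that Robinson's condition holds at $\hat x$, and that $\|T_{\hat x}^{-1}(-F(\hat x))\|\le g(0)$. Your Steps~1 and~2 match the content of the paper's Propositions~\ref{pr:maj.f} and~\ref{pr:ar2} and (for the perturbation estimate) Proposition~\ref{wdns}; the ``main obstacle'' you flag is exactly what Proposition~\ref{wdns} resolves, and it does so precisely as you anticipate---Lemma~\ref{L:Banach} is applied with $S=T_{\tilde x}^{-1}[F'(\hat x)-F'(\tilde x)]$ and $T=I$ on $\banacha$, then the identity $-(T_{\tilde x}^{-1}F'(\hat x))^{-1}=T_{\hat x}^{-1}(-F'(\tilde x))$ transfers the bound.

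The one genuine difference is in Step~3. The paper first bounds $\|T_{\tilde x}^{-1}(-F(y))\|$ (Proposition~\ref{pr:ar1}) via the linearization error $E(\tilde x,y)$ and Lemma~\ref{pr:taylor}, obtaining the clean estimate $f(\|y-\tilde x\|)+2\|y-\tilde x\|$, and only then passes to $T_{\hat x}^{-1}$ through Lemma~\ref{l:incltr}; this yields $[f(r)+2r]/(-f'(\rho))$ with $r=\|\hat x-\tilde x\|$, and the conclusion follows from monotonicity of $t\mapsto f(t)+2t$. Your direct decomposition at $T_{\hat x}^{-1}$ produces the alternative bound $\psi(r)=r+[2f(0)+rf'(r)-f(r)]/(-f'(\rho))$, which still closes because $\psi(\rho)\le g(0)$ reduces to $f(0)\le f(\rho)+\rho$. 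Both routes are valid; the paper's is slightly shorter and gives the sharper intermediate inequality. (One small correction: monotonicity of $r\mapsto rf'(r)-f(r)$ follows from $f'$ being increasing, not from convexity of $f'$.)
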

\begin{remark}
Using Lemma~\ref{L:Rocka1} its easy to see that the inequalities  \eqref{eq:MCAI},  \eqref{KH}  and \eqref{KH.2}  are well defined.  
\end{remark}
\begin{remark} \label{re:edef}
It follows from \eqref{ro2} and \eqref{ro3} that definition of the sequence $\{x_k\}$ in  \eqref{ns.KT} is equivalent to  the conditions
$$
x_{k+1}-x_k \in T_{x_k}^{-1}(-F(x_k)) \qquad \mbox{and} \qquad \left\|x_{k+1}-x_k \right\|=\left\|T_{x_k}^{-1}(-F(x_k))\right\|,  \qquad k=0,1, \ldots .
$$
\end{remark}
\begin{remark} \label{re:afilc}
Theorems~\ref{th:knt1} and \ref{th:knt2}  are  affine-invariant in the following sense: Letting  $A:  \banachb \to  \banachb$ be an invertible continuous linear mapping,     $\tilde{F} := A \circ F$ and the set $\tilde{C} := A(C)$, the corresponding inclusion problem \eqref{eq:Inc} is given by
$$
\tilde{F}(x) \in \tilde{C}, 
$$
and the convex process associated   is  denoted by $\tilde{T}_{\tilde{x}}d := \tilde{F} (\tilde{x})d - \tilde{C}$. Then $\tilde{T}_{\tilde{x}}= A \circ T_{\tilde{x}}$ and $\tilde{T}^{-1}_{\tilde{x}}= T^{-1}_{\tilde{x}} \circ A^{-1}$. Moreover, the conditions  $\mbox{rge\,}\tilde{T}_{\tilde{x}}=Y$, 
$$
 \left \|\tilde{T}_{\tilde{x}}^{-1}(- \tilde{F}(\tilde{x}))\right\|\leqslant f(0), 
$$
and  the affine majorant  condition (Lipschitz-like condition)
$$
\left\|\tilde{T}_{\tilde{x}}^{-1}\left[|\tilde{F}'(y)-|\tilde{F}'(x)\right]\right\| \leqslant  f'(\|y-x\|+\|x-\tilde{x}\|)-f'(\|x-\tilde{x}\|), 
$$  
for $x,y\in B(\tilde{x},R)$,  $\|x-\tilde{x}\|+\|y-x\|< R$, are equivalent to $\mbox{rge\,}T_{\tilde{x}}=\banachb$, \eqref{KH}  and   \eqref{eq:MCAI} respectively.   Therefore, the assumptions in Theorems~\ref{th:knt1} and \ref{th:knt2}  are insensitive with respect to invertible continuous linear mappings.  Note that the results of \cite{Rob1972-2}  do not have such property. For more details about  affine invariant theorems  see  \cite{DHB} and \cite{DH}.
\end{remark}
\subsection{Preliminary results} \label{sec:PR}
In this section, we will prove all the statements in Theorems~\ref{th:knt1} and \ref{th:knt2} regarding to the majorant function and  the sequence $\{t_k\}$ associated. The main relationships between the majorant function and the nonlinear operator  will be also established.

\subsection{The majorant function}
In this section we will study the majorant function $f$ and prove all
results regarding only the sequence $\{t_k\}$.  Define
\begin{equation}
  \label{eq:def.bart}
  \bar{t}:=\sup \left\{t\in [0,R): f'(t)<0 \right\}\;.
\end{equation}
\begin{proposition} \label{pr:maj.f}
  The majorant function $f$  has a smallest root $t_*\in  (0,R)$, is strictly convex and
\begin{equation}  \label{eq:n.f}
   f(t)>0, \quad f'(t)<0, \qquad t<t-f(t)/f'(t)< t_*,  \qquad\qquad  \forall \,t\in [0,t^*) .
\end{equation}
Moreover, $f'(t_*)\leqslant 0$ and
  \begin{equation}  \label{eq:pr.1.b}
     f'(t_*)<0\iff \exists\, t\in (t_*,R), \;f(t)< 0 .
  \end{equation}
  If, additionally, $f$ satisfies condition  {\bf h4} then the following statements  hold:
   \begin{itemize}
  \item[i)] $f'(t)<0$ for any $t\in [0,\bar t)$;
  \item[ii)] $0< t_* < \bar t\leq R$;
  \item[iii)] $ \beta=-\lim_{t\to \bar t_{-}} f(t),\qquad 0< \beta <\bar t$;
    \item[iv)]  If  $0\leq \rho<\beta/2$  then $\rho<\bar t/2 <\bar t$ and  $f'(\rho)<0$.
\end{itemize}
\end{proposition}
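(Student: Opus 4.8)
The plan is to extract everything from two structural consequences of \textbf{h1)}--\textbf{h3)}. First, $f$ is strictly convex, since by \textbf{h2)} its derivative $f'$ is strictly increasing, and a differentiable function with strictly increasing derivative is strictly convex. Second, because $f(0)>0$ by \textbf{h1)} while the zero set $Z:=\{t\in(0,R):f(t)=0\}$ is nonempty by \textbf{h3)}, I would set $t_*:=\inf Z$ and use continuity of $f$: since $f>0$ on a right neighbourhood of $0$ we get $t_*>0$, since $t_*$ is a limit of zeros (or a zero itself) we get $f(t_*)=0$, and since $Z\subseteq(0,R)$ we get $t_*<R$; finally, if $f(s)\le 0$ for some $s\in(0,t_*)$ the intermediate value theorem between $0$ and $s$ would produce a zero below $t_*$, contradicting minimality, so $f(t)>0$ for all $t\in[0,t_*)$. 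This already yields the first two claims of the proposition.

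Next I would establish $f'(t)<0$ on $[0,t_*)$ by contradiction: if $f'(\tau)\ge 0$ for some $\tau\in[0,t_*)$, then $f'$ being strictly increasing is strictly positive on $(\tau,t_*]$, hence $f$ is strictly increasing there and $0=f(t_*)>f(\tau)>0$, which is absurd. Granting $f(t)>0$ and $f'(t)<0$ on $[0,t_*)$, the inequality $t<t-f(t)/f'(t)$ in \eqref{eq:n.f} is immediate from the signs, and $t-f(t)/f'(t)<t_*$ is the strict tangent-line inequality of the strictly convex $f$ at the point $t$, namely $0=f(t_*)>f(t)+f'(t)(t_*-t)$, divided through by $f'(t)<0$. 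Letting $t\to t_*^-$ and using continuity of $f'$ gives $f'(t_*)\le 0$. For the equivalence \eqref{eq:pr.1.b}: if $f'(t_*)<0$, continuity of $f'$ keeps it negative on a right neighbourhood of $t_*$, so $f$ falls strictly below $f(t_*)=0$ there; conversely, if $f'(t_*)=0$ then strict monotonicity of $f'$ forces $f'>0$ on $(t_*,R)$, so $f$ is strictly increasing and remains positive on $(t_*,R)$, ruling out any point with $f<0$.

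Finally I would treat the regime \textbf{h4)}. Since $f>0$ on $[0,t_*)$ and $f(t_*)=0$, a point with $f<0$ necessarily lies in $(t_*,R)$, so \eqref{eq:pr.1.b} yields $f'(t_*)<0$; this is used throughout. For (i): $f'(0)=-1<0$ gives $\bar t>0$, and for $t\in[0,\bar t)$ the definition of $\bar t$ provides some $s$ with $t<s$ and $f'(s)<0$, whence $f'(t)<f'(s)<0$ by strict monotonicity. For (ii): $f'(t_*)<0$ and continuity of $f'$ keep $f'$ negative slightly past $t_*$, so $\bar t>t_*$; together with $\bar t\le R$ this is $0<t_*<\bar t\le R$. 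For (iii): by (i), $f$ is strictly decreasing on $[0,\bar t)$, so $L:=\lim_{t\to\bar t^-}f(t)$ exists and equals $\inf_{[0,\bar t)}f$; if $\bar t<R$, continuity of $f'$ and maximality of $\bar t$ force $f'(\bar t)=0$, hence $f'>0$ on $(\bar t,R)$ and $f$ is strictly increasing there with minimum $f(\bar t)=L$, so in every case $\inf_{[0,R)}f=L$ and $\beta=-L=-\lim_{t\to\bar t^-}f(t)$. Then $\beta>0$ because \textbf{h4)} makes $\inf_{[0,R)}f<0$, and $\beta<\bar t$ follows from the strict tangent inequality at $0$, $f(t)>f(0)+f'(0)t=f(0)-t$ for $t\in(0,\bar t)$, which on letting $t\to\bar t^-$ gives $L\ge f(0)-\bar t>-\bar t$. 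Assertion (iv) is then immediate: $\rho<\beta/2<\bar t/2<\bar t$, and $f'(\rho)<0$ by (i).

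The one genuinely delicate point is part (iii): identifying $\lim_{t\to\bar t^-}f(t)$ with the global infimum of $f$ requires the case split on whether $\bar t=R$, and, when $\bar t<R$, the observation that $f'(\bar t)=0$ with $f'>0$ afterwards; moreover the strict inequality $\beta<\bar t$ genuinely uses $f(0)>0$ through the tangent estimate at $0$. Everything else reduces to strict convexity together with the sign of $f'$ on $[0,t_*)$, once those are in place.
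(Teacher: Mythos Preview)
Your proof is correct. The paper itself does not prove this proposition but merely cites Propositions~2.3 and~5.2 of \cite{FS12} and Proposition~3 of \cite{FS09}; the elementary strict-convexity argument you have written out is precisely the kind of reasoning those references contain, so your approach coincides with the intended one. One small point you may wish to make explicit in part~(iii): the bound $L\ge f(0)-\bar t$ delivers $\beta<\bar t$ only because $\bar t<\infty$, which is automatic here (trivially if $R<\infty$, and if $R=\infty$ then $f'$, being convex and strictly increasing, is unbounded above and so cannot remain negative on all of $[0,\infty)$), but this does not affect the validity of your argument.
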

\begin{proof}
See    Propositions 2.3 and 5.2 of \cite{FS12} and Proposition 3 of \cite{FS09}.
\end{proof}
In view of the second inequality in (\ref{eq:n.f}), Newton iteration is  well defined in $[0,t_*)$. Let us call it
\begin{equation} \label{eq:n.f.2}
  \begin{array}{rcl}
  n_f:[0,t_*)&\to& \mathbb{R}\\
    t&\mapsto& t-f(t)/f'(t).
  \end{array}
\end{equation}
\begin{proposition} \label{pr:2}
  Newton iteration  $n_f$ is strictly increasing,    maps $[0,t^*)$ in $[0,t^*)$.
\end{proposition}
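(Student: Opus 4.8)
The plan is to separate the statement into its two assertions, dealing first with the inclusion $n_f([0,t_*)) \subseteq [0,t_*)$, which is essentially already packaged in Proposition~\ref{pr:maj.f}. Indeed, recalling from \eqref{eq:n.f.2} that $n_f(t) = t - f(t)/f'(t)$, the third chain of inequalities in \eqref{eq:n.f} reads $t < t - f(t)/f'(t) < t_*$, i.e.\ $t < n_f(t) < t_*$ for every $t \in [0,t_*)$; since moreover $t \geq 0$, this yields $n_f(t) \in (0,t_*) \subseteq [0,t_*)$, so that Newton's iteration indeed maps $[0,t_*)$ into itself. Hence the only real content left is the strict monotonicity of $n_f$.

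For the monotonicity I would argue geometrically, using that $f$ is strictly convex (Proposition~\ref{pr:maj.f}), that $f(t) > 0$ and $f'(t) < 0$ for all $t \in [0,t_*)$, and that $f'$ is strictly increasing (hypothesis {\bf h2}). Fix $s,t$ with $0 \leq s < t < t_*$, and for $u \in [0,t_*)$ let $L_u$ denote the tangent line to the graph of $f$ at $u$, that is $L_u(r) := f(u) + f'(u)(r-u)$; since $f'(u) \neq 0$, $L_u$ is strictly decreasing and $n_f(u)$ is its unique root. Thus $n_f(s) < n_f(t)$ is equivalent to $L_s(n_f(t)) < 0 = L_s(n_f(s))$. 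By strict convexity of $f$ one has $L_t(s) < f(s) = L_s(s)$ and $L_s(t) < f(t) = L_t(t)$, so the affine functions $L_s$ and $L_t$ cross at a unique point $u_0 \in (s,t)$; since $f'(t) > f'(s)$, the difference $L_t(r) - L_s(r) = (f'(t)-f'(s))(r-u_0)$ is strictly positive for all $r > u_0$. Because $n_f(t) > t > u_0$ by the inclusion already established, it follows that $L_s(n_f(t)) < L_t(n_f(t)) = 0$, which is exactly what was needed; hence $n_f(s) < n_f(t)$ and $n_f$ is strictly increasing on $[0,t_*)$.

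An equivalent, purely analytic route is to substitute $f(s) = f(t) - \int_s^t f'(\tau)\,d\tau$ to obtain $n_f(t) - n_f(s) = \left( (t-s) - \frac{1}{f'(s)}\int_s^t f'(\tau)\,d\tau \right) + f(t)\left( \frac{1}{f'(s)} - \frac{1}{f'(t)} \right)$, and then to note that the first parenthesis is positive (since $f'$ strictly increasing gives $\int_s^t f'(\tau)\,d\tau > (t-s)f'(s)$, and dividing by $f'(s) < 0$ reverses the inequality) while the second term is positive (since $f(t) > 0$ and $f'(s) < f'(t) < 0$). The step I expect to need the most care is precisely this reliance on first-order data alone: $f$ is only $C^1$, so one cannot simply differentiate $n_f$ and invoke $n_f' = f f''/(f')^2 \geq 0$; it is the \emph{strict convexity of $f$}, not any second-order regularity, that has to carry the argument. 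One must also check throughout that all the relevant points — $s$, $t$, the crossing point $u_0$, and $n_f(t)$ — stay inside $[0,t_*)$, where $f'$ is defined, continuous and nonvanishing; this is guaranteed by $n_f(t) < t_*$ and $u_0 \in (s,t) \subset [0,t_*)$.
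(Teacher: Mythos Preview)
Your proof is correct. Both the geometric tangent-line argument and the analytic decomposition are valid, and you are right to avoid differentiating $n_f$: hypothesis {\bf h2} only gives that $f'$ is convex and strictly increasing, so $f''$ need not exist everywhere, and the textbook identity $n_f' = f f''/(f')^2$ is not available in the stated generality.

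The paper itself does not give a proof here; it simply cites Proposition~4 of \cite{FS09}. So there is no ``paper proof'' to compare against directly. Your argument is a self-contained replacement for that citation. One minor remark: in your closing paragraph you worry about $n_f(t)$ staying in $[0,t_*)$ so that ``$f'$ is defined, continuous and nonvanishing'' there, but in the geometric argument the tangent lines $L_s, L_t$ are globally defined affine functions, so the only place this matters is in ensuring $n_f(t)$ exists and satisfies $n_f(t) > t$ --- both of which you already extracted from \eqref{eq:n.f}. The concern is legitimate but the argument does not actually need $f'$ to be evaluated at $n_f(t)$ or at $u_0$.
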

\begin{proof}
See    Proposition 4 of \cite{FS09}.
\end{proof}
The definition of $\{t_k\}$ in Theorem~\ref{th:knt1} is equivalent to the following one
\begin{equation}\label{eq:tknk}
  t_0=0,\quad t_{k+1}=n_f(t_k), \qquad k=0,1,\ldots\, .
\end{equation}
\begin{corollary} \label{cr:kanttk}
 The sequence $\{t_{k}\}$ is well defined, is strictly increasing, is contained in $[0,t_*)$ and converges $Q$-linearly to $t_*$  as second inequality in \eqref{eq:lc1}. If $f$ also satisfies  {\bf h4}, then   the second inequality in \eqref{eq:qcct} holds and, moreover,  $\{t_{k}\}$ converges  $Q$-quadratically to $t_*$  as  in \eqref{eq:qcs}.
\end{corollary}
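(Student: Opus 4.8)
The plan is to establish Corollary~\ref{cr:kanttk} by combining Propositions~\ref{pr:maj.f} and~\ref{pr:2} to control the sequence $\{t_k\}$ defined by $t_0=0$, $t_{k+1}=n_f(t_k)$, and then to extract first $Q$-linear and then (under \textbf{h4}) $Q$-quadratic rates. First I would show $\{t_k\}$ is well defined and strictly increasing, contained in $[0,t_*)$: by Proposition~\ref{pr:2}, $n_f$ maps $[0,t_*)$ into itself, so by induction every $t_k$ lies in $[0,t_*)$ and $n_f(t_k)$ makes sense; strict monotonicity follows from the third inequality in \eqref{eq:n.f}, which gives $t<n_f(t)<t_*$ for all $t\in[0,t_*)$. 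A bounded increasing sequence converges; call the limit $\ell\le t_*$. Passing to the limit in $t_{k+1}=t_k-f(t_k)/f'(t_k)$ using continuity of $f$ and $f'$ on $[0,\bar t)\supseteq[0,t_*]$ and the fact that $f'$ is bounded away from $0$ near $t_*$ when $f'(t_*)<0$ — and handled via the squeeze $t_k<t_{k+1}<t_*$ in general — forces $f(\ell)=0$; since $t_*$ is the smallest zero and $\ell\le t_*$, we get $\ell=t_*$.

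Next I would prove the $Q$-linear estimate, i.e.\ the second inequality in \eqref{eq:lc1}, $t_*-t_{k+1}\le\frac12(t_*-t_k)$. The standard approach is to write, using $f(t_*)=0$ and the definition of $n_f$,
\[
t_*-t_{k+1}=t_*-t_k+\frac{f(t_k)}{f'(t_k)}=\frac{1}{f'(t_k)}\Big[f(t_k)-f(t_*)-f'(t_k)(t_k-t_*)\Big]=\frac{1}{f'(t_k)}\int_{t_k}^{t_*}\big[f'(u)-f'(t_k)\big]\,du.
\]
Since $f'$ is increasing and negative on $[0,t_*)$, the integrand is nonnegative and bounded above by $f'(t_*)-f'(t_k)\le -f'(t_k)$ (as $f'(t_*)\le 0$), so the integral is at most $-f'(t_k)(t_*-t_k)$, giving $t_*-t_{k+1}\le t_*-t_k$; to get the factor $\tfrac12$ one uses convexity of $f'$ together with the monotonicity of $n_f$, or more directly the elementary fact from Proposition~\ref{pr:conv.aux1} that for a convex $f'$, $\int_{t_k}^{t_*}[f'(u)-f'(t_k)]\,du\le\frac12[f'(t_*)-f'(t_k)](t_*-t_k)\le -\frac12 f'(t_k)(t_*-t_k)$. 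This yields \eqref{eq:lc1} and hence $Q$-linear convergence of $\{t_k\}$.

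Finally, assuming \textbf{h4}, I would sharpen this to the $Q$-quadratic bound, the second inequality in \eqref{eq:qcct}: $t_{k+1}-t_k\le\frac{D^-f'(t_*)}{-2f'(t_*)}(t_k-t_{k-1})^2$. Here I would again use an integral representation, this time comparing consecutive steps. Since $f(t_k)+f'(t_{k-1})(t_k-t_{k-1})-f(t_{k-1})=\int_{t_{k-1}}^{t_k}[f'(u)-f'(t_{k-1})]\,du$ and $f(t_{k-1})+f'(t_{k-1})(t_k-t_{k-1})=0$ by definition of $t_k$, we get $f(t_k)=\int_{t_{k-1}}^{t_k}[f'(u)-f'(t_{k-1})]\,du$, whence $t_{k+1}-t_k=-f(t_k)/f'(t_k)=\frac{1}{-f'(t_k)}\int_{t_{k-1}}^{t_k}[f'(u)-f'(t_{k-1})]\,du$. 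Bounding the integrand by convexity of $f'$ (Proposition~\ref{pr:conv.aux1}) in terms of $D^-f'(t_*)$, using $t_{k-1}<t_k<t_*$, and using $-f'(t_k)\ge -f'(t_*)>0$ (valid under \textbf{h4} by \eqref{eq:pr.1.b} and part~(i) of Proposition~\ref{pr:maj.f}), I would arrive at the stated quadratic estimate; applying Proposition~\ref{pr:qc} (finite dimensionality being irrelevant) to $\{t_k\}$ then gives the $Q$-quadratic rate in \eqref{eq:qcs}. I expect the main obstacle to be getting the constants exactly as claimed — in particular justifying the bound on $\int[f'(u)-f'(t_{k-1})]\,du$ by $\tfrac12 D^-f'(t_*)(t_k-t_{k-1})^2$, which requires careful use of the convexity inequality in Proposition~\ref{pr:conv.aux1} with the correct endpoints and a passage to the one-sided derivative at $t_*$; the convergence-to-$t_*$ argument, by contrast, should be routine once monotonicity and boundedness are in hand. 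Since all of these facts are exactly Propositions~2.3, 4, 5.2 of the cited references, I would present the proof as a short reduction: invoke Propositions~\ref{pr:maj.f} and~\ref{pr:2} for well-definedness, monotonicity, containment, and convergence; derive \eqref{eq:lc1} and \eqref{eq:qcct} from the integral identities above together with Proposition~\ref{pr:conv.aux1}; and conclude the $Q$-quadratic statements via Proposition~\ref{pr:qc}.
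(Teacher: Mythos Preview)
Your argument is correct in substance and is actually more detailed than what the paper does: the paper's own proof is a one-line reference, ``The proof follows the same pattern as the proof of Corollary~2.15 of \cite{FGO13},'' with no details given. So there is nothing in the paper to compare your method against beyond noting that you supply a self-contained argument where the paper defers to the literature.

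Two small points are worth tightening. First, there is a sign slip in your integral identity: the correct formula is
\[
t_*-t_{k+1}=\frac{1}{-f'(t_k)}\int_{t_k}^{t_*}\bigl[f'(u)-f'(t_k)\bigr]\,du,
\]
with $-f'(t_k)>0$; your subsequent bounds are consistent with this corrected version, so the error is cosmetic. Second, for the $Q$-quadratic part you plan to obtain the second inequality in \eqref{eq:qcs} by applying Proposition~\ref{pr:qc} to $\{t_k\}$, but that proposition only delivers a $\limsup$ bound, whereas \eqref{eq:qcs} asserts the \emph{pointwise} estimate $t_*-t_{k+1}\le \dfrac{D^-f'(t_*)}{-2f'(t_*)}(t_*-t_k)^2$ for every $k$. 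To get this you should instead reuse the integral identity for $t_*-t_{k+1}$ directly: bound $f'(u)-f'(t_k)\le D^-f'(t_*)(u-t_k)$ (monotonicity of difference quotients of the convex function $f'$, Proposition~\ref{pr:conv.aux1}) and use $-f'(t_k)\ge -f'(t_*)>0$, which holds under \textbf{h4} by \eqref{eq:pr.1.b}. This gives the pointwise quadratic bound immediately; Proposition~\ref{pr:qc} is then unnecessary for the scalar sequence.
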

\begin{proof}
 The proof follows the same pattern as the proof of  Corollary 2.15  of \cite{FGO13}.
\end{proof}
\subsection{Relationship between the majorant function and the nonlinear operator} \label{sec:rbmno}
In this section, we will present the main relationships between the majorant function $f$ and the nonlinear operator $F$ that we need for proving Theorem~\ref{th:knt1}.
\begin{proposition} \label{wdns}
  If \,\,$\| x-\tilde{x}\|\leqslant t< \bar{t}$ then   $\mbox{dom\,}(T_{x}^{-1}F'(\tilde{x}))=\banacha$ and there holds
$$
\left\|T_{x}^{-1}F'(\tilde{x})\right\|\leqslant  -1/f'(t). 
$$
As a consequence, $\mbox{rge\,}T_{x}=\banachb$.
\end{proposition}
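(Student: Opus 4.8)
The plan is to reduce the statement to the Banach-type perturbation lemma (Lemma~\ref{L:Banach}) applied \emph{inside} $T_{\tilde{x}}^{-1}$, namely to the sublinear mapping
\[
\Phi:=T_{\tilde{x}}^{-1}\bigl(-F'(x)\bigr)\colon\banacha\rightrightarrows\banacha,
\]
rather than to $T_x$ directly: perturbing $T_x=T_{\tilde{x}}+\bigl(F'(x)-F'(\tilde{x})\bigr)$ would only produce a bound involving $\|T_{\tilde{x}}^{-1}\|$, not the desired $-1/f'(t)$. First I would record that Robson's Condition \eqref{eq:RobCond} together with Lemma~\ref{L:Rocka1} gives $\mbox{dom\,}T_{\tilde{x}}=\banacha$ and $\|T_{\tilde{x}}^{-1}\|<+\infty$; consequently $\Phi$ is a sublinear mapping with closed graph, and since $\mbox{rge\,}T_{\tilde{x}}=\banachb$ absorbs the range of the bounded linear map $-F'(x)$, we also have $\mbox{dom\,}\Phi=\banacha$.

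Next comes the only quantitative input. Taking the pair $(\tilde{x},x)$ in the affine majorant condition \eqref{eq:MCAI}, using $f'(0)=-1$, the monotonicity of $f'$, $\|x-\tilde{x}\|\leqslant t<R$, and Remark~\ref{r:pn} to discard a sign, one obtains
\[
\bigl\|T_{\tilde{x}}^{-1}\bigl[F'(\tilde{x})-F'(x)\bigr]\bigr\|=\bigl\|T_{\tilde{x}}^{-1}\bigl[F'(x)-F'(\tilde{x})\bigr]\bigr\|\leqslant f'(\|x-\tilde{x}\|)-f'(0)\leqslant f'(t)+1,
\]
and since $t<\bar{t}$ forces $f'(t)<0$ (by \eqref{eq:def.bart} and monotonicity of $f'$), the sublinear mapping $E:=T_{\tilde{x}}^{-1}\bigl[F'(\tilde{x})-F'(x)\bigr]$ has closed graph, $\mbox{dom\,}E=\banacha$ and $\|E\|\leqslant f'(t)+1<1$. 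Writing $-F'(x)=-F'(\tilde{x})+\bigl(F'(\tilde{x})-F'(x)\bigr)$, invoking the superadditivity of $T_{\tilde{x}}^{-1}$ from \eqref{eq:dsblm}, and using the membership $-d\in T_{\tilde{x}}^{-1}\bigl(-F'(\tilde{x})d\bigr)$ (immediate from \eqref{ro3} since $0\in C$), one gets, for every $d\in\banacha$,
\[
\Phi(d)\supseteq T_{\tilde{x}}^{-1}\bigl(-F'(\tilde{x})d\bigr)+T_{\tilde{x}}^{-1}\bigl(\bigl[F'(\tilde{x})-F'(x)\bigr]d\bigr)\supseteq -d+E(d).
\]

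Now I would apply Lemma~\ref{L:Banach} to $-I+E$ with ``$T$''$=-I$ (so $\|(-I)^{-1}\|=1$) and ``$S$''$=E$, noting $\|(-I)^{-1}\|\|E\|=\|E\|<1$ and that the values $-d+E(d)$ are closed because $E(d)$ is a preimage of the closed cone $C$; this yields $\mbox{rge\,}(-I+E)=\banacha$ and $\|(-I+E)^{-1}\|\leqslant 1/(1-\|E\|)\leqslant 1/(-f'(t))=-1/f'(t)$. Passing to inverses reverses the pointwise inclusion $\Phi(d)\supseteq(-I+E)(d)$ at the level of graphs, so $\Phi^{-1}(w)\supseteq -(-I+E)^{-1}(w)$ for all $w\in\banacha$; hence $\mbox{dom\,}\Phi^{-1}=\banacha$ and, the norm being insensitive to the sign, $\|\Phi^{-1}\|\leqslant\|(-I+E)^{-1}\|\leqslant -1/f'(t)$. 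Finally, a direct computation from \eqref{ro3} identifies $\Phi^{-1}=T_x^{-1}\bigl(-F'(\tilde{x})\bigr)$; from $\mbox{dom\,}\bigl(T_x^{-1}(-F'(\tilde{x}))\bigr)=\banacha$ we read off $\mbox{rge\,}F'(\tilde{x})\subseteq\mbox{rge\,}T_x$, whence (using $\mbox{rge\,}T_{\tilde{x}}=\mbox{rge\,}F'(\tilde{x})-C=\banachb$ and $C+C\subseteq C$) $\mbox{rge\,}T_x=\banachb$. This last fact makes $\mbox{dom\,}\bigl(T_x^{-1}F'(\tilde{x})\bigr)=\banacha$ and, via Remark~\ref{r:pn}, $\|T_x^{-1}F'(\tilde{x})\|=\|T_x^{-1}(-F'(\tilde{x}))\|=\|\Phi^{-1}\|\leqslant -1/f'(t)$, which is exactly the assertion together with its stated consequence.

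The main obstacle is not analytic but combinatorial: on convex processes, composition is only subadditive and inversion only reverses inclusions, and neither operation commutes set-theoretically with a sign change, so the argument must be arranged so that every quantitative estimate is funneled through the \emph{norm}, where Remark~\ref{r:pn} and $\|-d\|=\|d\|$ make signs harmless. A secondary point demanding care is the verification, at each step, of the closed-graph, closed-values and full-domain hypotheses needed to legitimately invoke Lemmas~\ref{L:Rocka1} and~\ref{L:Banach} for the composite mappings $\Phi$, $E$ and $-I+E$.
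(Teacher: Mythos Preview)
Your argument is correct and is essentially the paper's own proof with the opposite sign convention: the paper works with $T_{\tilde{x}}^{-1}F'(x)$ instead of your $\Phi=T_{\tilde{x}}^{-1}(-F'(x))$, obtains the inclusion $(T_{\tilde{x}}^{-1}F'(x))d\supseteq (I+S)d$ with $S=T_{\tilde{x}}^{-1}[F'(x)-F'(\tilde{x})]$, applies Lemma~\ref{L:Banach} with $T=I$, and then makes the same identification $-(T_{\tilde{x}}^{-1}F'(x))^{-1}=T_x^{-1}(-F'(\tilde{x}))$ that you record as $\Phi^{-1}=T_x^{-1}(-F'(\tilde{x}))$. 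One small slip worth noting: graph inclusion is \emph{preserved}, not reversed, under inversion, so your conclusion should read $\Phi^{-1}(w)\supseteq(-I+E)^{-1}(w)$ without the extra minus sign (as you observe, this is harmless for the norm and for nonemptiness); and for the consequence $\mbox{rge\,}T_x=\banachb$ the paper takes a slightly different route, bounding $\|T_x^{-1}\|\leqslant\|T_x^{-1}F'(\tilde{x})T_{\tilde{x}}^{-1}\|\leqslant\|T_x^{-1}F'(\tilde{x})\|\,\|T_{\tilde{x}}^{-1}\|<+\infty$ via Lemma~\ref{l:incltr} and then invoking Lemma~\ref{L:Rocka1}, whereas your direct argument using $C+C\subseteq C$ is equally valid.
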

\begin{proof}
Take $0\leqslant t<\bar{t}$ and $x\in B[\tilde{x},t]$.  Since $f$ is a majorant function for $F$ at  $\tilde{x}$,  using the   \eqref{eq:MCAI}, {\bf h2}, {\bf h1} and  Proposition~\ref{pr:maj.f} item i we  obtain
\begin{align*}
  \left\|T_{\tilde{x}}^{-1}[F'(x)-F'(\tilde{x})]\right\|
    &\leqslant  f'(\|x-\tilde{x}\|)-f'(0)\\
    &\leqslant   f'(t)-f'(0)\\
    &=f'(t)+1<1.
  \end{align*}
  For simplify the notation define $S=T_{\tilde{x}}^{-1}[F'(x)-F'(\tilde{x})]$. 
Since $[F'(x)-F'(\tilde{x})]$ is a continuous linear mapping  and $T_{\tilde{x}}^{-1}$ is   sublinear mapping with closed graph,  it easy to see that    $S$ is a  sublinear mapping with closed graph. Moreover,   as by assumption  $\mbox{rge\,}T_{\tilde{x}}=\banachb$ we have  $\mbox{dom\,}S=\banacha$.  Because $S$ has closed graph,  it easy to see that $(S+I)(x)$  has also  closed graph   for all $x\in \banacha$,  where  $I$  is the identity mapping  on $\banacha$. Therefore,  applying  Lemma~\ref{L:Banach}  with $T=I$ and taking in account  above inequality we conclude that  $\mbox{rge\,}(T_{\tilde{x}}^{-1}[F'(x)-F'(\tilde{x})]  + I)=\banacha$  and
  \begin{equation}  \label{eq:bn1}
\left \|\left(T_{\tilde{x}}^{-1}[F'(x)-F'(\tilde{x})]  + I\right)^{-1}\right\| \leqslant \frac{1}{1-\left(f'(t)+1\right)}= \frac{1}{-f'(t)}.
  \end{equation}
 Since $0\in C$  we have   $T_{\tilde{x}}^{-1}F'(\tilde{x})d\ni d$ for all $d\in \banacha$. Thus, as    $T_{\tilde{x}}^{-1}$ is   sublinear mapping, using  the last inclusion in \eqref{eq:dsblm} we obtain that 
 $$
 \left(T_{\tilde{x}}^{-1}F'(x)\right)d\supseteq   \left(T_{\tilde{x}}^{-1}[F'(x)-F'(\tilde{x})]  + I\right)d, \qquad  \forall ~ d\in \banacha.
 $$
 In particular, as  we know that $\mbox{rge\,}(T_{\tilde{x}}^{-1}[F'(x)-F'(\tilde{x})]  + I)=\banacha$,  last inclusion  implies that 
\begin{equation} \label{eq:rgebt1}
 \mbox{rge\,}\left(T_{\tilde{x}}^{-1}F'(x)\right)=\banacha.
\end{equation}
Hence, utilizing again  above inclusion and definition of the inverse in \eqref{ro3} we easily  conclude that  
$$
 \left(T_{\tilde{x}}^{-1}F'(x)\right)^{-1}v\supseteq   \left(T_{\tilde{x}}^{-1}[F'(x)-F'(\tilde{x})]  + I\right)^{-1}v, \qquad \forall~ v\in \banachb.
$$
Taking into account  the definition of the norm in \eqref{eq;dn}, the last inclusion implies that   
  \begin{equation} \label{eq:bn2}
\left\|\left(T_{\tilde{x}}^{-1}F'(x)\right)^{-1} \right\| \leqslant  \left \| \left(T_{\tilde{x}}^{-1}[F'(x)-F'(\tilde{x})]  + I\right)^{-1}\right\|.
  \end{equation}
 On the other hand, as $F'(x)$  is a linear mapping,  using the definitions in  \eqref{eq:ding} and  \eqref{ro3} we obtain that
 \begin{align*}
 - \left(T_{\tilde{x}}^{-1}F'(x)\right)^{-1}d &=\{-v\in \banacha~:~ F'(\tilde{x})d-F'(x)v\in C\} \notag\\
                                                               & = \{u\in \banacha~:~  F'(x)u+F'(\tilde{x})d\in C\}\\
                                                               &= T_{x}^{-1}(-F'(\tilde{x}))d, \qquad \notag
\end{align*}
for all  $d\in \banacha$. Thus using the  last equality, definition of the norm in \eqref{eq;dn}, \eqref{eq:rgebt1} and Lemma~\ref{L:Rocka1} we have 
\begin{equation}  \label{eq:bn3}
\left\| T_{x}^{-1}(-F'(\tilde{x}))  \right\| = \left\|  - \left(T_{\tilde{x}}^{-1}F'(x)\right)^{-1}\right\|= \left\| \left(T_{\tilde{x}}^{-1}F'(x)\right)^{-1}\right\|<+\infty.
\end{equation}
Thus, since  $F'(\tilde{x})$  is a linear mapping,  Remark~\ref{r:pn} and last inequaity allow us to conclude  that
$$
\left\| T_{x}^{-1}F'(\tilde{x}) \right\|=\left\| T_{x}^{-1}(-F'(\tilde{x}))  \right\|<+\infty, 
$$
which combined with Lemma~\ref{L:Rocka1} implies  the first statement of the proposition. Moreover, the desired  inequality follows by combination of  \eqref{eq:bn1}, \eqref{eq:bn2},  \eqref{eq:bn3} with the last equality. 

For proof the last statement  of the proposition, first note that   definition of the norm in \eqref{eq;dn} and Lemma~\ref{l:incltr}  give us
$$
\| T_{x}^{-1}\| \leqslant \|T_{x}^{-1}F'(\tilde{x})T_{\tilde{x}}^{-1}\|.
$$
From Lemma~\ref{L:Rocka1}  the assumption  $\mbox{rge\,}T_{\tilde{x}}=Y$  implies $\|T_{\tilde{x}}^{-1}\|<+ \infty$ and first part of the proposition implies $\|T_{x}^{-1}F'(\tilde{x})\|<\infty$, hence \eqref{eq;pnor}  and above inequality yields $\| T_{x}^{-1}\|<+ \infty$ which, by using again Lemma~\ref{L:Rocka1},  gives the desired result.
\end{proof}
Newton's iteration  at a point $x\in \Omega$ happens to be a solution  of the linearization
of the inclusion $F(y)\in C$ at such a point, namely, a solution of the linear  inclusion $F(x)+F'(x)(x-y)\in C$ .  So, we study the  linearization error of $F$   at a point
in $\Omega$
\begin{equation}\label{eq:def.er}
  E(x,y):= F(y)-\left[ F(x)+F'(x)(y-x)\right],\qquad y,\, x\in \Omega.
\end{equation}
We will bound this error by the error in the linearization on the
majorant function $f$
\begin{equation}\label{eq:def.ers}
        e(t,s):=f(s)-[f(t) +f'(t)(s-t)],   \qquad
     t, s\in [0,R).
\end{equation}
\begin{lemma}  \label{pr:taylor}
Take
$
x,y\in B(\tilde{x},R) \quad\mbox{and}\quad 0\leqslant t<s<R.
$
If $\|x-\tilde{x}\|\leqslant t$ and $\|y-x\|\leqslant s-t$, then 
\[ 
\left\|T_{\tilde{x}}^{-1}(-E(x,y))\right\|\leqslant e(t, s)\left[\frac{\|y-x\|}{s-t}\right]^2. 
\]
\end{lemma}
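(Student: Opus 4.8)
The plan is to represent the linearization error $-E(x,y)$ as an integral over $\tau\in[0,1]$ of the linear operators $[F'(x)-F'(x+\tau(y-x))](y-x)$, to push the sublinear mapping $T_{\tilde{x}}^{-1}$ inside this integral by means of Lemma~\ref{LiNg}, and then to recognize the resulting scalar integral as the linearization error $e$ of the majorant function, cf. \eqref{eq:def.ers}. The case $y=x$ is trivial since $E(x,x)=0$, so I would assume $y\neq x$. First I would observe that $B(\tilde{x},R)\subseteq\Omega$ contains the whole segment joining $x$ and $y$, because $\|x+\tau(y-x)-\tilde{x}\|\leq\|x-\tilde{x}\|+\tau\|y-x\|\leq t+\tau(s-t)\leq s<R$ for every $\tau\in[0,1]$; hence the fundamental theorem of calculus gives $-E(x,y)=\int_{0}^{1}[F'(x)-F'(x+\tau(y-x))](y-x)\,d\tau$. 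Setting $G(\tau):=[F'(x)-F'(x+\tau(y-x))](y-x)$ and $U:=T_{\tilde{x}}^{-1}$, the map $G:[0,1]\to\banachb$ is continuous, $\banacha$ is reflexive, and the Robson's Condition at $\tilde{x}$ together with Lemma~\ref{L:Rocka1} yields $\mbox{dom\,}U=\banachb\supseteq\mbox{rge\,}G$; $U$ is sublinear with closed graph. Thus Lemma~\ref{LiNg} will apply as soon as one produces a continuous $g:[0,1]\to\mathbb{R}$ dominating $\tau\mapsto\|UG(\tau)\|$.

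To build $g$, I would bound $\|UG(\tau)\|$. Since $A_\tau:=F'(x)-F'(x+\tau(y-x))$ is a continuous linear map, $T_{\tilde{x}}^{-1}A_\tau$ is sublinear with $(T_{\tilde{x}}^{-1}A_\tau)(y-x)=UG(\tau)$, and the composition norm bound in \eqref{eq;pnor} together with Remark~\ref{r:pn} (legitimate because $\mbox{dom\,}T_{\tilde{x}}^{-1}=\banachb$) gives $\|UG(\tau)\|\leq\|T_{\tilde{x}}^{-1}[F'(x+\tau(y-x))-F'(x)]\|\,\|y-x\|$. Applying the affine majorant condition \eqref{eq:MCAI} to the admissible pair $x,\ x+\tau(y-x)$ (admissible since $\|x-\tilde{x}\|+\tau\|y-x\|\leq s<R$) yields $\|UG(\tau)\|\leq\|y-x\|\,[\,f'(\|x-\tilde{x}\|+\tau\|y-x\|)-f'(\|x-\tilde{x}\|)\,]$; then, using that $f'$ is convex (so that $a\mapsto f'(a+h)-f'(a)$ is nondecreasing, by monotonicity of divided differences of the convex function $f'$, cf.\ Proposition~\ref{pr:conv.aux1}) and that $\|x-\tilde{x}\|\leq t$, this is at most $\|y-x\|\,[\,f'(t+\tau\|y-x\|)-f'(t)\,]=:g(\tau)$, which is continuous and well defined because $t+\tau\|y-x\|\leq s<R$. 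Lemma~\ref{LiNg} then gives $\big\|T_{\tilde{x}}^{-1}(-E(x,y))\big\|\leq\int_0^1 g(\tau)\,d\tau$, and the change of variables $u=t+\tau\|y-x\|$ reduces the right side to $f(t+\|y-x\|)-f(t)-f'(t)\|y-x\|=e\big(t,\,t+\|y-x\|\big)$, by definition \eqref{eq:def.ers}.

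The final step is the ``quadratic upgrade'': I would show $e\big(t,\,t+\|y-x\|\big)\leq e(t,s)\,[\|y-x\|/(s-t)]^{2}$ using $\|y-x\|\leq s-t$. Writing $\Psi(\xi):=f'(t+\xi)-f'(t)$, which is convex on $[0,R-t)$ with $\Psi(0)=0$ by {\bf h1}--{\bf h2}, one has $e(t,t+u)=\int_{0}^{u}\Psi(\xi)\,d\xi=u\int_{0}^{1}\Psi(u\sigma)\,d\sigma$; convexity and $\Psi(0)=0$ give $\Psi(\mu\xi)\leq\mu\,\Psi(\xi)$ for $\mu\in[0,1]$, so with $\mu=\|y-x\|/(s-t)$ one gets $e\big(t,\,t+\|y-x\|\big)\leq\big(\|y-x\|/(s-t)\big)^{2}\,e\big(t,\,t+(s-t)\big)=e(t,s)\,[\|y-x\|/(s-t)]^{2}$, which is the claimed inequality.

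I expect the main obstacle to be the sublinear-mapping bookkeeping rather than any analytic subtlety: one must carefully justify moving $T_{\tilde{x}}^{-1}$ inside the integral via Lemma~\ref{LiNg} (which is exactly where reflexivity of $\banacha$ and $\mbox{rge\,}T_{\tilde{x}}=\banachb$ are used, the latter ensuring $\mbox{dom\,}T_{\tilde{x}}^{-1}=\banachb$ through Lemma~\ref{L:Rocka1}), and the step $\|T_{\tilde{x}}^{-1}(A_\tau(y-x))\|\leq\|T_{\tilde{x}}^{-1}A_\tau\|\,\|y-x\|$, which rests on the composition norm inequality in \eqref{eq;pnor} and Remark~\ref{r:pn}. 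The only genuinely substantive analytic point is the convexity-based scaling estimate for $e$ that turns the crude bound $e(t,t+\|y-x\|)$ into the quadratic one in $\|y-x\|/(s-t)$; everything else is routine calculus.
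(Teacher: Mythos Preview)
Your proof is correct and follows essentially the same route as the paper: integral representation of $-E(x,y)$, application of Lemma~\ref{LiNg} with $U=T_{\tilde{x}}^{-1}$ (using $\mbox{rge\,}T_{\tilde{x}}=\banachb$ and reflexivity of $\banacha$), the majorant bound \eqref{eq:MCAI}, and convexity of $f'$. The only organizational difference is where the convexity scaling enters: the paper applies Proposition~\ref{pr:conv.aux1} to $f'$ \emph{inside} the integrand, obtaining $f'(t+\tau\|y-x\|)-f'(t)\leq [f'(t+\tau(s-t))-f'(t)]\,\|y-x\|/(s-t)$ and then integrating directly to $e(t,s)\,[\|y-x\|/(s-t)]^{2}$, whereas you first integrate to $e(t,t+\|y-x\|)$ and then perform the ``quadratic upgrade'' via $\Psi(\mu\xi)\leq\mu\,\Psi(\xi)$ on the error function. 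Both are straightforward consequences of the convexity of $f'$ and yield the same bound; your two-stage presentation isolates the analytic content of the quadratic factor a bit more explicitly, while the paper's one-shot version is slightly shorter.
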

\begin{proof}
  As  $x, y\in B(\tilde{x},R)$  and the ball is convex
  $
  x+\tau(y-x)\in B(\tilde{x},R),
  $
  for all $\tau\in [0,1]$.   Since, by assumption,  $\mbox{rge\,}T_{\tilde{x}}=\banachb$ we obtain   that $\mbox{dom\,}T_{\tilde{x}}^{-1}=\banachb$. On the other hand,   taking into account  Remark~\ref{r:pn}  and that   $F'(.)$  is a linear mapping on $ \mbox{dom}F$,  we conclude
$$
 \left\|T_{\tilde{x}}^{-1}\left(-[F'(x+\tau(y-x))-F'(x)](y-x)\right) \right\| \leq \left\|T_{\tilde{x}}^{-1}[F'(x+\tau(y-x))-F'(x)]\right\|\left\|(y-x) \right\|, 
$$
for all $\tau\in [0,1]$. Hence, as $f$ is a majorant function for $F$ at  $\tilde{x}$,  using the   \eqref{eq:MCAI} and last inequality we have 
$$
  \left\|T_{\tilde{x}}^{-1}\left(-[F'(x+\tau(y-x))-F'(x)](y-x)\right) \right\|\leqslant \left[f'\left(\|x-\tilde{x}\|+\tau\left\|y-x\right\|\right)-f'\left(\|x-\tilde{x}\|\right)\right]\|y-x\|,
  $$
for all $\tau\in [0,1]$. Thus, as $\mbox{dom\,}T_{\tilde{x}}^{-1}=\banachb$  we apply   Lemma~\ref{LiNg} with  $U=T_{\tilde{x}}^{-1}$,   $G(\tau)$ equal to the expression  in  parentheses on  the left had side of last inequality and $g(\tau)$ equal to the expression on the right hand site of that inequality,  obtaining   
\begin{multline}
    \left\|  T_{\tilde{x}}^{-1}\int_0 ^1 -[F'(x+\tau(y-x))-F'(x)](y-x)\; d\tau \right\| \\ \leqslant \int_0 ^1 
    \left[f'\left(\|x-\tilde{x}\|+\tau\left\|y-x\right\|\right)-f'\left(\|x-\tilde{x}\|\right)
    \right]
    \|y-x\|\;d\tau\,, 
\end{multline}
Using the convexity of $f'$, that $\|x-\tilde{x}\|<t$,
  $\|y-x\|<s-t$, $s<R$ and Proposition~\ref{pr:conv.aux1} we have
  \begin{align*}
    f'\left(\|x-\tilde{x}\|+\tau\|y-x\|\right)-f'\left(\|x-\tilde{x}\|\right)
    &\leqslant f'\left(t+\tau\|y-x\|\right)-f'\left(t\right)\\
    &\leqslant [f'(t+\tau(s-t))-f'(t) ] \frac{\|y-x\|}{s-t}\,,
  \end{align*}
  for any $\tau\in [0,1]$. Combining the two above inequalities  we obtain
  \[ 
		 \left\|  T_{\tilde{x}}^{-1}\int_0 ^1 -[F'(x+\tau(y-x))-F'(x)](y-x)\; d\tau \right\| \leqslant \int_0 ^1 [f'(t+\tau(s-t))-f'(t) ] \frac{\|y-x\|^2}{s-t}\, d\tau,
  \]
  which, after performing the integration of the right hand side,  taking in account the definition of  $e(t, s)$ in \eqref{eq:def.ers}  and that \eqref{eq:def.er} is equivalent to 
   \[
   E(x,y)=\int_0 ^1 [F'(x+\tau(y-x))-F'(x)](y-x)\; d\tau,
   \]
  yields the desired result.
  \end{proof}
Since  $ \banacha$ is reflexive, second part of Lemma \ref{wdns} guarantees, in particular,  that   the {\it Newton's step set $D_{F,C}(x)$ at $x\in B(\tilde{x},t_*)$ associated to $F$ and $C$}  is nonempty,  that  is, 
\begin{align} \label{eq:nstep}
D_{F,C}(x)&:= \mbox{argmin} \left\{ \|d\| ~: ~  F(x)+F'(x)d \in C \right\}\\   \notag
                   &=  \mbox{argmin} \left\{ \|d\| ~: ~  d\in T_{x}^{-1}(-F(x)) \right\}  \\  
                   &\neq \varnothing, \notag
\end{align}
 and consequently, the Newton's iteration multifunction is well defined in $ B(\tilde{x},t_*)$.
  Let us call  $N_{F, C}$   the {\it Newton's iteration multifunction for $F$ and $C$} defined  in $ B(\tilde{x},t_*)$:
\begin{equation} \label{NF}
  \begin{array}{rcl}
  N_{F,C}: B(\tilde{x},t_*)&  \rightrightarrows & \banacha\\
    x&\mapsto& x+D_{F,C}(x).
  \end{array}
\end{equation}
One can apply a \emph{single} Newton's iteration multifunction on any $x\in  B(\tilde{x},t_*)$ to obtain the set $N_{F, C}(x)$
which may not is contained 
to $ B(\tilde{x},t_*)$, or even may not in the domain of $F$.
So, this is enough to guarantee the 
well-definedness of only one iteration.   To ensure that Newtonian
iteration multifunction  may be repeated indefinitely, we need some
additional results. 

First, define some subsets of $B(\tilde{x}, t_*)$ in which, as we shall
prove, Newton iteration \eqref{NF} is ``well behaved''.
\begin{align}\label{E:K}
K(t)&:=\left\{ x\in   \banacha ~: ~ \|x-\tilde{x}\|\leqslant t\, ,  \;
   \left\|T_{x}^{-1}(-F(x))\right\| \leqslant -\frac{f(t)}{f'(t)}\right\},\qquad
   t\in [0,t_*)\,,\\
  \label{eq:def.K}
 K&:=\bigcup_{t\in[0,t_*)} K(t).
\end{align}
In \eqref{E:K}, $0\leqslant t<t_*$, therefore, $f'(t)\neq 0$ and $\mbox{rge\,}T_{x}=\banachb$
 in $B[\tilde{x},t]\subset B[\tilde{x},t_*)$  (Proposition~\ref{wdns}).
So, the above definitions are consistent.
\begin{lemma} \label{l:iset1}
For each $t\in [0, t_*)$, $x\in K(t)$ and   $x_{+}\in N_{F,C}(x)$ there hold:
\begin{itemize}
 \item[{\bf i)}]  $ \|x_{+}-x\| \leq n_f(t)-t$;
 \item[{\bf ii)}]  $\|x_{+}-\tilde{x}\| \leq n_{f}(t)<t_*$;
 \item[{\bf iii)}]  $\displaystyle \|z-x_{+}\|\leq  -\frac{f(n_f(t))}{f'(n_f(t))}\left[ \frac{\|x_{+}-x\|}{n_f(t)-t}\right]^2,$ for all $z\in N_{F,C}(x_{+}).$
\end{itemize}
\end{lemma}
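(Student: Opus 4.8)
The plan is to obtain items (i) and (ii) directly from the definitions of $K(t)$ in \eqref{E:K} and of $N_{F,C}$ in \eqref{NF}, and then to derive (iii) by writing $F(x_+)$ as a vector of $C$ plus the linearization error $E(x,x_+)$ of \eqref{eq:def.er}, thereby reducing the estimate of $\|T_{x_+}^{-1}(-F(x_+))\|$ to one for $\|T_{x_+}^{-1}(-E(x,x_+))\|$, and finally chaining Lemma~\ref{l:incltr}, Proposition~\ref{wdns} and Lemma~\ref{pr:taylor}.

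For (i): since $x_+\in N_{F,C}(x)=x+D_{F,C}(x)$, Remark~\ref{re:edef} (equivalently \eqref{eq:nstep}) gives $x_+-x\in T_x^{-1}(-F(x))$ with $\|x_+-x\|=\|T_x^{-1}(-F(x))\|$; as $x\in K(t)$, this last quantity is at most $-f(t)/f'(t)=n_f(t)-t$. Item (ii) is then immediate from the triangle inequality $\|x_+-\tilde x\|\le\|x_+-x\|+\|x-\tilde x\|\le(n_f(t)-t)+t=n_f(t)$, together with $n_f(t)<t_*$ from Proposition~\ref{pr:2} (or the middle inequality in \eqref{eq:n.f}).

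For (iii), set $u:=n_f(t)$, so that by (ii) $x_+\in B(\tilde x,t_*)$ and, again by \eqref{eq:nstep} and Remark~\ref{re:edef}, any $z\in N_{F,C}(x_+)$ satisfies $\|z-x_+\|=\|T_{x_+}^{-1}(-F(x_+))\|$. The crucial observation is that $x_+-x\in T_x^{-1}(-F(x))$ means $c:=F(x)+F'(x)(x_+-x)\in C$, hence $F(x_+)=c+E(x,x_+)$; since $C$ is a convex cone ($C+C\subseteq C$), every $d$ with $F'(x_+)d+E(x,x_+)\in C$ also satisfies $F'(x_+)d+F(x_+)\in C$, so $T_{x_+}^{-1}(-E(x,x_+))\subseteq T_{x_+}^{-1}(-F(x_+))$ and
\[
\|z-x_+\|=\|T_{x_+}^{-1}(-F(x_+))\|\le\|T_{x_+}^{-1}(-E(x,x_+))\|.
\]
Next, the first inclusion of Lemma~\ref{l:incltr} with $z=x_+$, $v=\tilde x$, $w=-E(x,x_+)$, followed by submultiplicativity of the inner norm, gives $\|T_{x_+}^{-1}(-E(x,x_+))\|\le\|T_{x_+}^{-1}F'(\tilde x)\|\,\|T_{\tilde x}^{-1}(-E(x,x_+))\|$. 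By (ii), $\|x_+-\tilde x\|\le u<\bar t$ (here $\bar t\ge t_*$, since $f'<0$ on $[0,t_*)$ by \eqref{eq:n.f}), so Proposition~\ref{wdns} yields $\|T_{x_+}^{-1}F'(\tilde x)\|\le-1/f'(u)$; and Lemma~\ref{pr:taylor}, applied with this $x$, with $y=x_+$ and with $t<s:=u<R$ — legitimate because $\|x-\tilde x\|\le t$ and, by (i), $\|x_+-x\|\le u-t$ — gives $\|T_{\tilde x}^{-1}(-E(x,x_+))\|\le e(t,u)[\|x_+-x\|/(u-t)]^2$. Finally $e(t,u)=f(u)$, since $u=n_f(t)$ forces $f(t)+f'(t)(u-t)=0$ in \eqref{eq:def.ers}. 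Multiplying the three bounds produces
\[
\|z-x_+\|\le-\frac{f(n_f(t))}{f'(n_f(t))}\left[\frac{\|x_+-x\|}{n_f(t)-t}\right]^2,
\]
which is (iii).

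The main obstacle is the reduction $\|T_{x_+}^{-1}(-F(x_+))\|\le\|T_{x_+}^{-1}(-E(x,x_+))\|$: it rests on the cone structure of $C$ (adding the cone vector $c=F(x)+F'(x)(x_+-x)$ only enlarges the relevant preimage), and it must be combined with careful tracking of which sublinear mappings have full domain and range — this is where the Robson condition and Proposition~\ref{wdns} enter — so that the inner-norm inequalities routed through Lemma~\ref{l:incltr} stay finite and meaningful. Everything after that is routine bookkeeping with $n_f$ and $e(\cdot,\cdot)$.
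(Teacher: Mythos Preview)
Your proof is correct and follows essentially the same route as the paper: items (i) and (ii) come directly from the definitions of $K(t)$ and $N_{F,C}$, and item (iii) is obtained by routing $\|T_{x_+}^{-1}(-F(x_+))\|$ through the linearization error via the cone inclusion $T_{x_+}^{-1}(-E(x,x_+))\subseteq T_{x_+}^{-1}(-F(x_+))$, then applying Lemma~\ref{l:incltr}, Proposition~\ref{wdns} and Lemma~\ref{pr:taylor}, together with the identity $e(t,n_f(t))=f(n_f(t))$. The only noticeable difference is cosmetic: the paper chains the two inclusions into a single statement $T_{x_+}^{-1}F'(\tilde x)T_{\tilde x}^{-1}(-E(x,x_+))\subset T_{x_+}^{-1}(-F(x_+))$ before passing to norms (and in fact asserts an equality $T_{x_+}^{-1}(-E(x,x_+))=T_{x_+}^{-1}(-F(x_+))$ where, as you correctly observe, only the inclusion $\subseteq$ is both true and needed).
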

\begin{proof}
Take $t\in[0,t_*)$, $x\in K(t)$.  Using definition in  \eqref{E:K} and the  statements in Proposition~\ref{pr:2} we have
\begin{equation} \label{eq:eq.aux.k}
   \| x-\tilde{x}\|\leqslant t,\qquad \|T_{x}^{-1}(-F(x))\| \leqslant-f(t)/f'(t),\quad t<n_f(t)<t_*.
\end{equation}
Take    $x_{+}\in N_{F,C}(x)$. From the definition in   \eqref{NF}  we have   $x_{+}-x \in D_{F,C}(x)$,  which   taking  into account definition in \eqref{eq:nstep} and  second inequality  in  \eqref{eq:eq.aux.k}  yields 
\begin{equation*} \label{eq;cnes}
\|x_{+}-x\|=\| T_{x}^{-1}(-F(x))\| \leqslant-f(t)/f'(t),
\end{equation*}
and by using definition of $n_f$ in \eqref{eq:n.f.2} we obtain the item {\bf i}. 

Combining  first and last inequalities  in \eqref{eq:eq.aux.k},  item {\bf i} and  definition of $n_f$ in  \eqref{eq:n.f.2}   we have
\begin{align*}
\|x_{+}-\tilde{x}\| \leqslant \|x-\tilde{x}\|+\|x_{+}-x\| \leqslant t-f(t)/f'(t)=n_f(t)<t_*\,,
\end{align*}
which is the inequality in item {\bf ii}.

Now we are going to prove the last item of the lemma. Form item {\bf ii}   we conclude that
\begin{equation} \label{eq:auxNfNF}
 N_{F,C}(x)\subseteq B[\tilde{x}, n_f(t)]\subset B(\tilde{x}, t_*), 
\end{equation}
which, in particular,  implies that the set  $N_{F,C}(x)$ is contained in  the domain of   the function $F$.  Therefore,  we claim that the following  relations   hold:
\begin{equation} \label{eq:feq}
\varnothing \neq T_{x_{+}}^{-1}F'(\tilde{x})T_{\tilde{x}}^{-1}(-E(x,x_{+})) \subset T_{x_{+}}^{-1}(-F'(x_{+})), \qquad \forall ~ x_{+}\in N_{F,C}(x).
\end{equation}
where $E$ is the  linearization error of $F$  as defined in \eqref{eq:def.er}.    Assumption,  $\mbox{rge\,}T_{\tilde{x}}=\banachb$ implies   that $\mbox{dom\,}T_{\tilde{x}}^{-1}=\banachb$. Thus, as $ \|x_{+}-\tilde{x}\| <  t_*\leq \bar{t}$  using that  $\mbox{dom\,}T_{\tilde{x}}^{-1}=\banachb$ together with Proposition~\ref{wdns}  the first claim in \eqref{eq:feq} follows.  For proving   the inclusion  in \eqref{eq:feq},  we first use Lemma~\ref{l:incltr} for concluding that
$$
T_{x_{+}}^{-1}F'(\tilde{x})T_{\tilde{x}}^{-1}(-E(x,x_{+}))\subset T_{x_{+}}^{-1}(-E(x,x_{+})), \qquad \forall ~ x_{+}\in N_{F,C}(x).
$$
Since $x_{+}\in N_{F,C}(x)$  it follows from \eqref{eq:nstep} and \eqref{NF}  that  $ F(x)+F'(x)(x_{+}-x) \in C$. Hence, using definition in \eqref{ro3} we easy conclude that 
$$
T_{x_{+}}^{-1}(-E(x,x_{+}))= T_{x_{+}}^{-1}(-F'(x_{+})), \qquad \forall ~ x_{+}\in N_{F,C}(x).
$$
Therefore, combining two last inclusion we conclude that  the   inclusion \eqref{eq:feq} holds, as claimed.   On the other hand, since  $n_f(t)$ belongs to the domain of  $f$, using the definitions of Newton iterations on \eqref{eq:n.f.2} and definition of the linearization error  \eqref{eq:def.ers},  we obtain
\begin{equation} \label{eq:nderror}
 f(n_f(t))=f(n_f(t))-\left[f(t)+f'(t)(n_f(t)-t)\right]=e(t,n_f(t)).
\end{equation}
From \eqref{eq:eq.aux.k} we have  $\| x-\tilde{x}\|\leqslant t$ and $t<n_f(t)<t_*<R$. Hence,    the  first inequality in \eqref{eq:eq.aux.k} and item {\bf i} allow us to apply   Lemma~\ref{pr:taylor}
with $y=x_{+}$ and $s=n_{f}(t)$ to have
\begin{equation*}
     \| T_{\tilde{x}}^{-1}(-E(x,x_{+})) \| \leqslant e(t,n_f(t))\left[\frac{\|x_{+}-x\|}{f(t)/f'(t)}\right]^2=f(n_f(t)) \left[\frac{\|x_{+}-x\|}{f(t)/f'(t)}\right]^2,
\end{equation*}
where, in above expression,  was  used \ \eqref{eq:nderror} to obtain the equality.
As  $x_{+}\in N_{F,C}(x)$  implies that $\|x_{+}-\tilde{x}\|\leqslant n_f(t)<t_*\leq \bar{t}$, it follows from Proposition \ref{wdns} that $\mbox{dom\,}(T_{x_{+}}^{-1}F'(\tilde{x}))=\banacha$  and
\[ \|T_{x_{+}}^{-1}F'(\tilde{x})\|\leqslant -1/f'(n_f(t)).\]
Combining   \eqref{eq:feq}  with the two above inequalities and  by using property of the norm    we obtain  
\begin{align*}
\left\| T_{x_{+}}^{-1}(-F'(x_{+}))\right\| &\leq \left\| T_{x_{+}}^{-1}F'(\tilde{x})T_{\tilde{x}}^{-1}(-E(x,x_{+}))\right\| \\
                                                                         &\leqslant \left\| T_{x_{+}}^{-1}F'(\tilde{x})\right\| \left\|T_{\tilde{x}}^{-1}(-E(x,x_{+}))\right\| \\
                               &\leqslant -\frac{f(n_f(t))}{f'(n_f(t))} \left[\frac{\|x_{+}-x\|}{f(t)/f'(t)}\right]^2,
\end{align*}
hence,  as $\|z-x_{+}\|=\left\| T_{x_{+}}^{-1}(-F'(x_{+}))\right\| $ for all $z\in N_{F,C}(x_{+})$,  the item {\bf iii} is proved.
\end{proof}

\begin{lemma} \label{NfNF}
For each $t\in [0, t_*)$ the following inclusions  hold: $K(t)\subset B(\tilde{x},t_*)$ and
$$N_{F,C}\left( K(t) \right)\subset K\left( n_f(t) \right).$$
As a consequence,
$K\subset B(\tilde{x},t_*)$ and $N_{F,C}(K)\subset K.$
\end{lemma}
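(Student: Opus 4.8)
The plan is to reduce everything to Lemma~\ref{l:iset1}, which already carries out the analytic work. First, the inclusion $K(t)\subset B(\tilde{x},t_*)$ is immediate: by the definition \eqref{E:K} any $x\in K(t)$ satisfies $\|x-\tilde{x}\|\leqslant t<t_*$. For the second inclusion, fix $t\in[0,t_*)$, take $x\in K(t)$ and $x_{+}\in N_{F,C}(x)$; the goal is to show $x_{+}\in K(n_f(t))$. Note first that by Proposition~\ref{pr:2} one has $n_f(t)\in[0,t_*)$, so $f'(n_f(t))\neq 0$ and the set $K(n_f(t))$ is well defined.

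There are two conditions to check against the definition \eqref{E:K}. The bound $\|x_{+}-\tilde{x}\|\leqslant n_f(t)$ is exactly item~\textbf{ii} of Lemma~\ref{l:iset1}. For the residual bound, observe that item~\textbf{ii} also gives $x_{+}\in B(\tilde{x},t_*)$, hence---using the reflexivity of $\banacha$ together with the surjectivity conclusion of Proposition~\ref{wdns}, exactly as in \eqref{eq:nstep}---the set $N_{F,C}(x_{+})$ is nonempty; pick any $z\in N_{F,C}(x_{+})$, so that $z-x_{+}\in D_{F,C}(x_{+})$ and therefore $\|z-x_{+}\|=\left\|T_{x_{+}}^{-1}(-F(x_{+}))\right\|$ by \eqref{eq:nstep} and the definition of the norm. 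Now item~\textbf{i} of Lemma~\ref{l:iset1} gives $\|x_{+}-x\|\leqslant n_f(t)-t$, so the bracketed ratio appearing in item~\textbf{iii} is at most $1$, and item~\textbf{iii} yields
\[
\left\|T_{x_{+}}^{-1}(-F(x_{+}))\right\|=\|z-x_{+}\|\leqslant -\frac{f(n_f(t))}{f'(n_f(t))}.
\]
Combined with $\|x_{+}-\tilde{x}\|\leqslant n_f(t)$ this says precisely that $x_{+}\in K(n_f(t))$, which proves $N_{F,C}(K(t))\subset K(n_f(t))$.

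The two consequences then follow by taking unions. Since $K=\bigcup_{t\in[0,t_*)}K(t)$ and each $K(t)\subset B(\tilde{x},t_*)$, we get $K\subset B(\tilde{x},t_*)$. And if $x\in K$, then $x\in K(t)$ for some $t\in[0,t_*)$, so $N_{F,C}(x)\subset K(n_f(t))$; since $n_f(t)\in[0,t_*)$ by Proposition~\ref{pr:2}, we have $K(n_f(t))\subset K$, whence $N_{F,C}(K)\subset K$.

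I do not expect a genuine obstacle here, since the statement is essentially a repackaging of Lemma~\ref{l:iset1}. The only point requiring care is the identity $\|z-x_{+}\|=\left\|T_{x_{+}}^{-1}(-F(x_{+}))\right\|$ for $z\in N_{F,C}(x_{+})$: it relies on $N_{F,C}(x_{+})$ being nonempty, i.e.\ on the argmin in \eqref{eq:nstep} being attained, which uses the reflexivity of $\banacha$ together with $\mbox{rge\,}T_{x_{+}}=\banachb$ from Proposition~\ref{wdns}. Keeping straight which parameter ($t$ versus $n_f(t)$) governs which ball and which residual bound is the remaining bit of bookkeeping.
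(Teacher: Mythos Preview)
Your proof is correct and follows essentially the same approach as the paper: the first inclusion is read off from the definition of $K(t)$, the second is obtained by feeding items~\textbf{i}--\textbf{iii} of Lemma~\ref{l:iset1} into the defining conditions of $K(n_f(t))$, and the global statements follow by taking unions. If anything, your write-up is cleaner on the bookkeeping (explicitly noting $n_f(t)\in[0,t_*)$, the nonemptiness of $N_{F,C}(x_{+})$, and the identity $\|z-x_{+}\|=\|T_{x_{+}}^{-1}(-F(x_{+}))\|$) than the paper's own proof, which contains some evident typos at precisely these points.
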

\begin{proof}
The first inclusion follows trivially from the definition of $K(t)$. Combining items {\bf i} and {\bf iii} of Lemma~\ref{l:iset1} and taking in account  that 
 $$
 \|z-\tilde{x}\|=\left\| T_{\tilde{x}}^{-1}(-F'(\tilde{x}))\right\|, 
 $$
for all $z\in N_{F,C}(\tilde{x})$,  it easily to conclude that the following inequality holds
$$
\left\| T_{\tilde{x}}^{-1}(-F'(\tilde{x}))\right\|   \leq  -\frac{f(n_f(t))}{f'(n_f(t))}.
$$
This inequality together with   item {\bf ii} of Lemma~\ref{l:iset1}  proof the second inclusion. The next inclusion (first on the second sentence), follows trivially
from definitions \eqref{E:K} and \eqref{eq:def.K}.  To verify the last
inclusion, take $x\in K$.  Then $x\in K(t)$ for some $t\in [0,t_*)$.
Using the  first part of the lemma, we conclude that $N_{F,C}(x)\subseteq
K(n_f(t))$. To end the proof, note that $n_f(t)\in [0,t_*)$ and use
the definition of $K$.
\end{proof}
\subsection{Convergence}\label{sec:convxk}
Finally, we are ready to prove the main results of the paper, namely, the Theorem~\ref{th:knt1} and Theorem~\ref{th:knt2}. First we note that, by using \eqref{eq:nstep} and \eqref{NF},  the
sequence $\{x_k\}$ (see \eqref{ns.KT}) satisfies 
\begin{equation} \label{NFS}
 x_{k+1}\in N_{F,C}(x_k),\qquad k=0,1,\ldots \,,
\end{equation}
which is indeed an equivalent definition of this sequence.
\begin{proof}[Proof of Theorem~\ref{th:knt1}]
All statements  involving only  $\{t_k\}$ were proved in Corollary~\ref{cr:kanttk}, namely,  $\{t_k\}$ is strictly increasing, is contained in   $[0,t_*)$,  converges $Q$-linearly to $t_*$ as in second inequality in \eqref{eq:lc1} and if $f$ satisfies  {\bf h4} then   $\{t_k\}$  satisfies  the second inequality in \eqref{eq:qcct} and converge $Q$-quadratically as the  second inequality in   \eqref{eq:qcs}. 

Now we are going to prove the statements  involving the sequence $\{x_k\}$ with starting point $x_{0}=\tilde{x}$.   From \eqref{KH} and  {\bf h1},  we have
\[ 
x_{0}=\tilde{x} \in K(0)\subset K,
\]
where the second inclusion follows trivially from  \eqref{eq:def.K}.  Using the above inclusion,  
the inclusions $N_{F,C}(K)\subset K$ in Lemma \ref{NfNF} and \eqref{NFS} we conclude that the sequence
$\{x_k\}$ is well defined and rests in $K$.  From the first inclusion on second part of the Lemma \ref{NfNF}
we have trivially that $\{x_k\}$ is contained in $B(\tilde{x}, t_*)$.

 We will prove, by induction that
\begin{equation} \label{eq:xktk}
        x_k\in K(t_k), \qquad k=0,1,\ldots \,.
\end{equation}
The above inclusion, for  $k=0$ follows from  \eqref{KH}, assumption   {\bf h1} and definition of $K(0)$ in \eqref{E:K}. 
Assume now that $x_k\in K(t_k)$. Thus, using  Lemma \ref{NfNF}, \eqref{NFS} and \eqref{eq:tknk}  we conclude that $x_{k+1}\in K(t_{k+1}),$ which completes the induction  proof of \eqref{eq:xktk}.

Now, combining \eqref{eq:tknk}, \eqref{NFS}, \eqref{eq:xktk} and item~{\bf i} of Lemma~\ref{l:iset1} we obtain first inequality in    \eqref{eq:bd1}. Since $\{t_k\}$ converges to $t_*$, the first  inequality \eqref{eq:bd1} implies
\[ 
   \sum_{k=k_0}^\infty \|x_{k+1}-x_k\|\leqslant
   \sum_{k=k_0}^\infty t_{k+1}-t_k =t_*-t_{k_0}<+\infty,
\]
for any $k_0\in\mathbb{N}$. Hence, $\{x_k\}$ is a Cauchy sequence in
$B(\tilde{x}, t_*)$ and so, converges to some $x_*\in B[\tilde{x},t_*]$. 
The above inequality also implies  that $\|x_*-x_k\|\leqslant
t_*-t_k$, for any $k$. For  proving  that $F(x_*)\in C$. First, observe that the first  inequality in \eqref{eq:lc1}  implies that  $\|x_*-\tilde{x}\|\leq t_*<R$. Hence, as $B(\tilde{x},R)\subseteq \Omega$ we conclude that  $x_*\in \Omega$. Since definition of $\{x_k\}$ in \eqref{ns.KT} implies that
$$
 F(x_k)+F'(x_k)(x_{k+1}-x_k) \in C,\qquad k=0,1,\ldots\,, 
$$
and $F$ is continuous differentiable in $\Omega$, $\{x_k\}\subset B(\tilde{x},t_*)  \subset B(\tilde{x},R)$, $\{x_k\}$  converges to $x_*\in \Omega$ and  $C\subset \banachb $  is closed, the result follows by taking limit as $k$ goes to infinite  in   above inclusion.

In order to prove  the second inequality in \eqref{eq:bd1}, first note that $  x_k\in K(t_{k})$, $x_{k+1}\in N_{F,C}(x_k)$  and $t_{k+1}=n_f(t_k)$, for all $k=0,1,\ldots .$
Thus,  apply item~{\bf iii} of  Lemma~\ref{l:iset1}  with $x=x_{k-1}$, $z=x_{k+1}$, $x_{+}=x_{k}$  and $t=t_{k-1}$
to obtain
$$
\|x_{k+1}-x_k\|\leq -\frac{f(t_k)}{f'(t_k)}\left[ \frac{\|x_k-x_{k-1}\|}{t_{k}-t_{k-1}}\right]^2,
$$
which, using second inequality in \eqref{ns.KT}  yields the desired  inequality.

Now,  we assume that {\bf h4} holds. Therefore, combining both the second inequalities in \eqref{eq:qcct} and \eqref{eq:bd1}, we obtain the first inequality in \eqref{eq:qcct}. To establish the first inequality in  \eqref{eq:qcs}, use the  first inequality in \eqref{eq:qcct} and  Proposition~\ref{pr:qc} with $z_k=x_k$ and $\Theta=D^-f'(t_*)/(-2 f'(t_*))$. Thus, the proof is concluded.
\end{proof}
Now we are going to prove Theorem~\ref{th:knt2}, but first  we need   two more additional results. 
\begin{proposition}  \label{pr:ar1}
For any $y\in B(\tilde{x},R)$, the following inequality holds
$$
 \left \|T_{\tilde{x}}^{-1}(-F(y))\right\|  \leq f(\|y-\tilde{x}\|)+2\|y-\tilde{x}\|.
 $$
\end{proposition}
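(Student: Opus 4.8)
The plan is to decompose $-F(y)$ into three pieces whose $T_{\tilde{x}}^{-1}$-norms can be estimated separately, and then to pass the norm through the sum using the superadditivity built into the definition of a sublinear mapping, see \eqref{eq:dsblm}. Using the linearization error $E$ from \eqref{eq:def.er} we have
$$
-F(y)=\bigl(-F(\tilde{x})\bigr)+\bigl(-F'(\tilde{x})(y-\tilde{x})\bigr)+\bigl(-E(\tilde{x},y)\bigr).
$$
Since $F$ satisfies Robinson's Condition at $\tilde{x}$, Lemma~\ref{L:Rocka1} gives $\mbox{dom\,}T_{\tilde{x}}^{-1}=\banachb$, so each of the three preimages is nonempty; choosing a representative $d_1\in T_{\tilde{x}}^{-1}(-F(\tilde{x}))$, $d_2\in T_{\tilde{x}}^{-1}(-F'(\tilde{x})(y-\tilde{x}))$, $d_3\in T_{\tilde{x}}^{-1}(-E(\tilde{x},y))$ and using $T_{\tilde{x}}^{-1}(w+w')\supseteq T_{\tilde{x}}^{-1}w+T_{\tilde{x}}^{-1}w'$ together with the definition of the norm in \eqref{eq;dn}, one gets $d_1+d_2+d_3\in T_{\tilde{x}}^{-1}(-F(y))$ and hence, taking infima,
$$
\left\|T_{\tilde{x}}^{-1}(-F(y))\right\|\leq\left\|T_{\tilde{x}}^{-1}(-F(\tilde{x}))\right\|+\left\|T_{\tilde{x}}^{-1}\!\bigl(-F'(\tilde{x})(y-\tilde{x})\bigr)\right\|+\left\|T_{\tilde{x}}^{-1}(-E(\tilde{x},y))\right\|.
$$

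Next I would bound the three terms. The first is at most $f(0)$ by hypothesis \eqref{KH.2}. For the second, since $0\in C$ and $F'(\tilde{x})$ is linear, the definition \eqref{ro3} shows $-(y-\tilde{x})\in T_{\tilde{x}}^{-1}\bigl(-F'(\tilde{x})(y-\tilde{x})\bigr)$, because $F'(\tilde{x})\bigl(-(y-\tilde{x})\bigr)-\bigl(-F'(\tilde{x})(y-\tilde{x})\bigr)=0\in C$; therefore this term is at most $\|y-\tilde{x}\|$. For the third term, if $y=\tilde{x}$ it vanishes; otherwise I would apply Lemma~\ref{pr:taylor} with $x=\tilde{x}$, $t=0$ and $s=\|y-\tilde{x}\|\in(0,R)$ — the hypotheses $\|x-\tilde{x}\|\leq t$ and $\|y-x\|\leq s-t$ both hold (with equality) — to obtain $\left\|T_{\tilde{x}}^{-1}(-E(\tilde{x},y))\right\|\leq e(0,\|y-\tilde{x}\|)$.

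Finally, from the definition \eqref{eq:def.ers} of $e$ and from {\bf h1} (which gives $f'(0)=-1$),
$$
e(0,\|y-\tilde{x}\|)=f(\|y-\tilde{x}\|)-f(0)-f'(0)\|y-\tilde{x}\|=f(\|y-\tilde{x}\|)-f(0)+\|y-\tilde{x}\|,
$$
and adding the three bounds yields $f(0)+\|y-\tilde{x}\|+f(\|y-\tilde{x}\|)-f(0)+\|y-\tilde{x}\|=f(\|y-\tilde{x}\|)+2\|y-\tilde{x}\|$, which is the asserted inequality. There is no serious obstacle; the only points needing care are the justification that the $T_{\tilde{x}}^{-1}$-norm is subadditive along a sum of points — which follows from the superadditivity in \eqref{eq:dsblm} plus nonemptiness of the preimages, itself a consequence of Robinson's Condition via Lemma~\ref{L:Rocka1} — and the trivial separate handling of the degenerate case $y=\tilde{x}$ so that Lemma~\ref{pr:taylor} is applied with $s>0$.
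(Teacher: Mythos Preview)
Your proof is correct and follows essentially the same route as the paper's own argument: the same decomposition $-F(y)=-F(\tilde{x})-F'(\tilde{x})(y-\tilde{x})-E(\tilde{x},y)$, the same use of the superadditivity of $T_{\tilde{x}}^{-1}$ to split the norm, and the same three bounds (hypothesis~\eqref{KH}/\eqref{KH.2}, the observation $-(y-\tilde{x})\in T_{\tilde{x}}^{-1}(-F'(\tilde{x})(y-\tilde{x}))$, and Lemma~\ref{pr:taylor} at $x=\tilde{x}$, $t=0$, $s=\|y-\tilde{x}\|$). The only cosmetic differences are your explicit appeal to Lemma~\ref{L:Rocka1} for nonemptiness of the preimages and the separate treatment of $y=\tilde{x}$, both of which the paper handles in the same way.
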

\begin{proof}
Let $y\in B(\tilde{x},R)$. First note that assumption \eqref{KH}  implies that the result holds for $y=\tilde{x}$. Hence assume that  $y\neq\tilde{x}$.  From  definition of the  linearization error of $F$ in \eqref{eq:def.er} we have
$$
  -F(y)=-E(\tilde{x},y)-F(\tilde{x})-F'(\tilde{x})(y-\tilde{x}).
$$
Above equality together definition of sublinear mapping in \eqref{eq:dsblm}  give us
$$
 T_{\tilde{x}}^{-1}(-F(y))\supseteq  T_{\tilde{x}}^{-1}(-E(\tilde{x},y)) + T_{\tilde{x}}^{-1}(-F(\tilde{x})) + T_{\tilde{x}}^{-1}\left(-F'(\tilde{x})(y-\tilde{x})\right).
$$
Taking in account properties of the norm,  above inclusion implies
\begin{equation} \label{eq:bdfrt}
\left\|T_{\tilde{x}}^{-1}(-F(y))\right\| \leq   \left\|T_{\tilde{x}}^{-1}(-E(\tilde{x},y))\right\| + \left\|T_{\tilde{x}}^{-1}(-F(\tilde{x}))\right\| + \left\|T_{\tilde{x}}^{-1}\left(-F'(\tilde{x})(y-\tilde{x})\right)\right\|.
\end{equation}
Now we are going to bound the  three  terms in the left hand side of last inequality. Applying Lemma~\ref{pr:taylor} with $x=\tilde{x}$, $t=0$ and $s=\|y-\tilde{x}\|$ and taking in account that $f'(0)=-1$ we have
$$
 \left\|T_{\tilde{x}}^{-1}(-E(\tilde{x},y))\right\|\leq f(\|y-\tilde{x}\|)-f(0)+\|y-\tilde{x}\|.
$$
Definition of $T_{\tilde{x}}^{-1}$  implies that $-(y-\tilde{x})\in T_{\tilde{x}}^{-1}\left(-F'(\tilde{x})(y-\tilde{x})\right) $, hence we conclude that
$$
 \left\|T_{\tilde{x}}^{-1}\left(-F'(\tilde{x})(y-\tilde{x})\right)\right\| \leq \|y-\tilde{x}\|.
$$
Since assumption \eqref{KH} implies that second term in \eqref{eq:bdfrt} is bounded by  $f(0)$, thus substituting the two later inequalities into \eqref{eq:bdfrt} the desired inequality follows.
\end{proof}
\begin{proposition}  \label{pr:ar2}
Let $R>0$ and $ f:[0,R)\to \mathbb{R}$ a continuously differentiable function.   Suppose that $\tilde{x}\in \Omega$,   $f$ is a majorant function for $F$ at  $\tilde{x}$ and satisfies {\bf h4}. If  $0\leq \rho<  \beta/2$, where $\beta:=\sup\{ -f(t) ~:~ t\in[0,R)  \}$ then for any $\hat{x}\in  B(\tilde{x},  \rho)$ the scalar  function    $g:[0,R-\rho)\to \mathbb{R}$, 
\begin{equation*}
  g(t)=\frac{-1}{f'(\rho)}[f(t+\rho)+2\rho], 
\end{equation*}
is a majorant function for $F$ at $\hat{x}$ and also satisfies condition {\bf h4}.
\end{proposition}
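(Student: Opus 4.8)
The plan is to check, item by item, that the shifted function $g$ meets the definition of a majorant function for $F$ at any prescribed $\hat{x}\in B(\tilde{x},\rho)$ — that is, $B(\hat{x},R-\rho)\subseteq\Omega$, the affine majorant inequality \eqref{eq:MCAI} for $g$ at $\hat{x}$, conditions {\bf h1}--{\bf h3}, and finally {\bf h4}. I record at the outset that $g'(t)=\tfrac{-1}{f'(\rho)}\,f'(t+\rho)$, and that by Proposition~\ref{pr:maj.f} (items i, iii, iv) one has $0<t_*<\bar{t}\leq R$, $\beta=-\lim_{t\to\bar{t}_-}f(t)$, $\rho<\bar{t}/2<\bar{t}$ and $f'(\rho)<0$, so that the constant $\tfrac{-1}{f'(\rho)}$ is strictly positive; this sign fact is used repeatedly.

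I would dispatch the elementary items first. The inclusion $B(\hat{x},R-\rho)\subseteq B(\tilde{x},R)\subseteq\Omega$ is immediate from $\hat{x}\in B(\tilde{x},\rho)$. For {\bf h1}, $g'(0)=\tfrac{-1}{f'(\rho)}f'(\rho)=-1$, while $g(0)>0$ amounts to $f(\rho)+2\rho>0$; this I would obtain by noting that $t\mapsto f(t)+2t$ has derivative $f'(t)+2\geq f'(0)+2=1>0$ on $[0,R)$ (by {\bf h1}--{\bf h2}), hence is strictly increasing and stays above its value $f(0)>0$ at $0$, so in fact $f(t)+2t>0$ for every $t\in[0,R)$. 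For {\bf h2}, $g'$ is a strictly positive multiple of the translate $f'(\,\cdot\,+\rho)$ and therefore inherits convexity and strict monotonicity from $f'$. For {\bf h4}, $g(t)<0$ is equivalent to $f(t+\rho)<-2\rho$; since $\rho<\beta/2$ we have $-2\rho>-\beta$, and since $f$ is continuous, strictly decreasing on $[0,\bar{t})$ (Proposition~\ref{pr:maj.f}, item i) with $f(\rho)>-2\rho$, $\lim_{s\to\bar{t}_-}f(s)=-\beta<-2\rho$ and $\rho<\bar{t}$, there is $s\in(\rho,\bar{t})\subseteq(\rho,R)$ with $f(s)<-2\rho$; then $t:=s-\rho\in(0,R-\rho)$ gives $g(t)<0$. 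Finally {\bf h3} follows from {\bf h4} together with $g(0)>0$ and continuity of $g$.

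The heart of the argument is the affine majorant inequality \eqref{eq:MCAI} for $g$ at $\hat{x}$. Fix $x,y\in B(\hat{x},R-\rho)$ with $\|x-\hat{x}\|+\|y-x\|<R-\rho$ and abbreviate $a:=\|x-\hat{x}\|$, $b:=\|y-x\|$. By the triangle inequality $x,y\in B(\tilde{x},R)$ and $\|x-\tilde{x}\|+\|y-x\|<R$, so the majorant inequality for $f$ at $\tilde{x}$ applies and gives $\|T_{\tilde{x}}^{-1}[F'(y)-F'(x)]\|\leq f'(b+\|x-\tilde{x}\|)-f'(\|x-\tilde{x}\|)$. I then route the left-hand side through the base point $\tilde{x}$: by the consequence of Lemma~\ref{l:incltr} with $z=\hat{x}$, $v=\tilde{x}$ together with the submultiplicativity in \eqref{eq;pnor},
\[
\left\|T_{\hat{x}}^{-1}[F'(y)-F'(x)]\right\|\leq\left\|T_{\hat{x}}^{-1}F'(\tilde{x})\right\|\left\|T_{\tilde{x}}^{-1}[F'(y)-F'(x)]\right\|,
\]
and since $\|\hat{x}-\tilde{x}\|\leq\rho<\bar{t}$, Proposition~\ref{wdns} yields $\|T_{\hat{x}}^{-1}F'(\tilde{x})\|\leq -1/f'(\rho)$ (and, incidentally, $\mbox{rge\,}T_{\hat{x}}=\banachb$). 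It remains to absorb the discrepancy between base points: $\|x-\tilde{x}\|\leq a+\|\hat{x}-\tilde{x}\|\leq a+\rho$, and by convexity of $f'$ the sliding difference $u\mapsto f'(b+u)-f'(u)$ is non-decreasing on $[0,R-b)$ (a standard consequence of Proposition~\ref{pr:conv.aux1}); since $b+a+\rho<R$ this gives $f'(b+\|x-\tilde{x}\|)-f'(\|x-\tilde{x}\|)\leq f'(b+a+\rho)-f'(a+\rho)$. Multiplying by $\tfrac{-1}{f'(\rho)}>0$ and chaining the estimates,
\[
\left\|T_{\hat{x}}^{-1}[F'(y)-F'(x)]\right\|\leq\frac{-1}{f'(\rho)}\bigl[f'(b+a+\rho)-f'(a+\rho)\bigr]=g'\bigl(\|y-x\|+\|x-\hat{x}\|\bigr)-g'\bigl(\|x-\hat{x}\|\bigr),
\]
which is exactly \eqref{eq:MCAI} for $g$ at $\hat{x}$, completing the verification.

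The main obstacle is bookkeeping rather than conceptual. Two points must be handled with care: first, the loss incurred when replacing $\|x-\tilde{x}\|$ by its bound $a+\rho$ has to be matched \emph{exactly} by the $\rho$-shift built into $g$, which is why one needs both the monotonicity of the sliding difference of $f'$ and the sharp estimate $\|T_{\hat{x}}^{-1}F'(\tilde{x})\|\leq -1/f'(\rho)$ from Proposition~\ref{wdns}, not a cruder bound, so that the estimate lands on $g'(\|y-x\|+\|x-\hat{x}\|)-g'(\|x-\hat{x}\|)$ and nothing larger; second, compatibility of the $2\rho$-shift with {\bf h1} and {\bf h4} rests on the inequality $f(t)+2t>0$ for all $t\in[0,R)$ and on the identification $\beta=-\lim_{t\to\bar{t}_-}f(t)$, neither of which should be skipped.
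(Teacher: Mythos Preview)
Your proof is correct and follows essentially the same route as the paper's: you verify {\bf h1}--{\bf h4} for $g$ using the positivity of $-1/f'(\rho)$ and the properties of $f$ from Proposition~\ref{pr:maj.f}, and you establish the affine majorant inequality for $g$ at $\hat{x}$ by chaining Lemma~\ref{l:incltr}, Proposition~\ref{wdns}, the majorant inequality for $f$ at $\tilde{x}$, and the monotonicity of the sliding difference $u\mapsto f'(b+u)-f'(u)$. The only cosmetic differences are that you obtain $f(\rho)+2\rho>0$ via the derivative bound $f'(t)+2\geq 1$ whereas the paper uses the tangent-line inequality $f(t)+t\geq f(0)$, and you argue {\bf h4} by the intermediate-value theorem whereas the paper directly computes $\lim_{t\to\bar{t}-\rho}g(t)$.
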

\begin{proof}
Since the domain of $f$ is  $[0, R)$  and $f'(\rho)\neq 0$ (see Proposition~\ref{pr:maj.f} item iv ) we conclude that $g$ is well defined.   First we will prove that function $g$ satisfies condition {\bf h1, h2, h3} and {\bf h4}.  We trivially have that  $g'(0)=-1$.  Since $f$ is convex, combining this with {\bf h1} we have $f(t)+t\geq f(0)>0$,  for all  $0\leq t<R$, which   by using    Proposition~\ref{pr:maj.f} item {\it iv}   implies  $g(0)=  -[f(\rho)+2\rho]/f'(\rho)>0$, hence $g$ satisfies {\bf h1}. Using {\bf h2} and   Proposition~\ref{pr:maj.f} item {\it iv}   we easily conclude that $g$  also satisfies {\bf h2}.  Now,  as $\rho<  \beta/2$,  using Proposition~\ref{pr:maj.f}  items  {\it iii} and {\it iv}  we have
\[ 
\lim_{t\to \bar t-\rho}g(t)=\frac{-1}{f'(\rho)}(2\rho-\beta)<0\;, 
\]
which implies that  $g$ satisfies {\bf h4} and, as $g$ is continuous and $g(0)>0$,  it also satisfies  {\bf h3}.

To complete the proof, it remains to prove that $g$ satisfies \eqref{eq:MCAI}. First of all,  note that for any   $\hat{x}\in  B(\tilde{x},  \rho)$, from   Proposition~\ref{pr:maj.f}   item {\it iv}   we have  $\|\hat x - \tilde x\|<\rho<\bar t$ and by   using  Proposition~\ref{wdns} we obtain  that
\begin{equation} \label{eq:maj.02r1}
 \left\|T_{\hat{x}}^{-1}F'(\tilde{x})\right\|\leq \frac{-1}{f'(\rho)}.
\end{equation}
Because $ B(\tilde{x},  R) \subseteq \Omega$,  for any  $\hat{x}\in  B(\tilde{x},  \rho)$ we trivially  have   $B(\hat{x},R-\rho)\subset \Omega$.   Now, take $x,y\in \banacha$ such that
\[
x,y\in B(\hat{x},R-\rho),\qquad \|x-\hat{x}\|+\|y-x\|<R-\rho\;. 
\]
Hence $x,y\in B(\tilde{x},R)$ and $\|x-\tilde{x}\|+\|y-x\|<R$. Thus,  applying the inequality  of the Lemma~\ref{l:incltr} with $z=\hat{x}$ and $v=\tilde{x}$,  using  the property of the norm in \eqref{eq;pnor}, \eqref{eq:maj.02r1} and \eqref{eq:MCAI} we have
\begin{align*}
  \left\|T_{\hat{x}}^{-1}\left[F'(y)-F'(x)\right]\right\| \leq& \left\|T_{\hat{x}}^{-1}F'(\tilde{x})T_{\tilde{x}}^{-1}\left[F'(y)-F'(x)\right]\right\|\\
                                                                    \leq&  \left\|T_{\hat{x}}^{-1}F'(\tilde{x})\right\| \left\|T_{\tilde{x}}^{-1}\left[F'(y)-F'(x)\right]\right\|\\
                                                                   \leq &\frac{-1}{f'(\rho)}\left[f'\left(\|y-x\|+\|x-\tilde{x}\|)-f'(\|x-\tilde{x}\|\right)\right].
\end{align*}
On the other hand, since $f'$ is convex, the function $s\mapsto f'(t+s)-f'(s)$ is increasing for $t\geq 0$ and as $\|x-\tilde{x}\|\leq \|x-\hat{x}\|+\rho$ we conclude that
$$
 f'\left(\|y-x\|+\|x-\tilde{x}\|\right)-f'\left(\|x-\tilde{x}\|\right)  \leq  f'\left(\|y-x\|+\|x-\hat{x}\|+\rho\right) -f'\left(\|x-\hat{x}\|+\rho\right).
$$
Hence, combining the two above inequalities with the definition of the function  $g$ we obtain
\[
\|T_{\hat{x}}^{-1}\left[F'(y)-F'(x)\right]\| \leq  g'(\|y-x\|+\|x-\hat{x}\|)-g'(\|x-\hat{x}\|), 
\]
implying that the function  $g$ satisfies \eqref{eq:MCAI}, which complete the proof of the proposition.
\end{proof}
We are now ready to prove Theorem~\ref{th:knt2},   its proof is obtained by combining   Theorem~\ref{th:knt1} with  Propositions~ \ref{pr:ar1} and \ref{pr:ar2} .
\begin{proof}[Proof of Theorem~\ref{th:knt2}]
Since    $\hat{x}\in  B(\tilde{x},  \rho)$, from   Proposition~\ref{pr:maj.f}   item~{\it iv}   we have  $\|\hat x - \tilde x\|<\rho<\bar t$ and by   using  Proposition~\ref{wdns} we obtain  that
\begin{equation} \label{eq:maj.02r}
 \left\|T_{\hat{x}}^{-1}F'(\tilde{x})\right\|\leq \frac{-1}{f'(\rho)}\;,
\end{equation}
moreover,  the point  $\hat{x}$ satisfies the Robinson's Condition, namely, 
\begin{equation} \label{eq:rcr}
 \mbox{rge\,}T_{\hat{x}}=\banachb.
\end{equation}
Hence, applying  inclusion in  Lemma~\ref{l:incltr} with $z=\hat{x}$, $v=\tilde{x}$ and $w=-F(\hat{x})$, using property of the norm in \eqref{eq;pnor},  inequality \eqref{eq:maj.02r} and Proposition~\ref{pr:ar1} with $y=\hat{x}$ we obtain
\begin{align*}
 \left\|T_{\hat{x}}^{-1}(-F(\hat{x}))\right\| \leq &  \left\|T_{\hat{x}}^{-1}F'(\tilde{x})T_{\tilde{x}}^{-1}(-F(\hat{x}))\right\|\\
                                                                     \leq &  \left\|T_{\hat{x}}^{-1}F'(\tilde{x})\right\|  \left\|T_{\tilde{x}}^{-1}(-F(\hat{x}))\right\|\\
  \leq & \frac{-1}{f'(\rho)} \left[f\left(\|\hat x - \tilde x\|\right) +2\|\hat x - \tilde x\|\right].
\end{align*}
As $f'\geq -1$, the function $t\mapsto f(t)+2t$ is (strictly)
increasing.  Combining this fact with  $\|\hat x - \tilde x\|<\rho$,   the above inequality and definition of the function $g$ we conclude that
\[
\left\|T_{\hat{x}}^{-1}(-F(\hat{x}))\right\| \leq g(0), 
\]
Therefore, since   \eqref{eq:rcr} implies that $\hat{x}$ satisfies the Robinson's Condition and  Proposition~\ref{pr:ar2} implies that $g$ is a majorant function for $F$ at $\hat{x}$ satisfying condition {\bf h4},  the last inequality allow us  to  apply Theorem~\ref{th:knt1} for $F$ and the majorant function $g$ at point $\hat{x}$   for obtaining the desired result.
\end{proof}
\section{Special Case} \label{apl}
The affine majorant condition is crucial for our analysis.  It is worth pointing out that to construct a majorizing function for a given nonlinear function  is a very difficult problem and this is not our aim in this moment. On the other hand, there exist some classes of well known functions which  a majorant function is available, below we will present two examples, namely,   the classes of  functions  satisfying the a affine invariant Lipschitz-like and  Smale's  conditions, respectively.    In this sense,  the results obtained in Theorems~ \ref{th:knt1} and \ref{th:knt2}  unify the convergence  analysis  for the classes of inclusion problems involving these functions.
\subsection{Convergence result  under affine invariant  Lipschitz-like condition}
In this section, we will present a robust convergence theorem on Newton's method for solving nonlinear inclusion problem under affine invariant  Lipschitz-like condition, in particular, the result include as special  case the Theorem~$2$ of  \cite{Rob1972-2}.  Under  the Lipschitz-Like  condition, Theorem~\ref{th:knt1} becomes:
\begin{theorem}\label{th:kntlscl1}
Let $\banacha$, $\banachb$ be Banach spaces  and $\banacha$ reflexive, $\Omega\subseteq \banacha$ an open set and
  $F:{\Omega}\to \banachb$  a continuously
  differentiable function and  $C\subset \banachb $ a nonempty closed convex cone.   Take  $\tilde{x}\in \Omega$,
  $L>0$ and $b>0$.  Suppose  that $\tilde{x}\in \Omega$, $F$ satisfies  the Robson's Condition at  $\tilde{x}$,    
  $$
   B(\tilde{x},R)\subseteq \Omega, \qquad  \left\|T_{\tilde{x}}^{-1}\left[F'(y)-F'(x)\right]\right\| \leq  L\|y-x\|, \qquad \forall~ x,y\in B(\tilde{x},R),
  $$
  and $2bL\leq 1$. Moreover, assume that
  $$
  \left \|T_{\tilde{x}}^{-1}(-F(\tilde{x}))\right\|\leqslant b.
  $$
  Define,  the scalar  function $f:[0,+\infty)\to \mathbb{R}$ as 
$
  f(t):= Lt^2/2 - t + b.
$
Then  $f$ has   $ t_*:=(1-\sqrt{1-2bL})/L$ as  a smallest zero,  the sequences  generated by Newton's Method for solving   the inclusion 
  $
  F(x)\in C
  $ and  the equation  $f(t)=0$,  with starting
  point  $x_0=\tilde{x}$ and $t_0=0$, respectively, 
  \begin{align*}   \label{ns.KT}
    x_{k+1}\in x_k  +  \mbox{argmin} \left\{ \|d\|: \,  F(x_k)+F'(x_k)d \in C \right\},  \quad    t_{k+1} ={t_k}- \frac{f(t_k)}{f'(t_k)}, \qquad  k=0,1,\ldots\,, 
  \end{align*}
  are well defined, $\{x_k\}$ is contained in $B(\tilde{x},  t_*)$, $\{t_k\}$ is strictly increasing,   is contained in   $[0,t_*)$  and  converge  to  $t_*$ and    satisfy  the inequalities
\begin{equation*}\label{eq:bd}
  \|x_{k+1}-x_{k}\|   \leq  t_{k+1}-t_{k} , \qquad \qquad \|x_{k+1}-x_{k}\|\leq   \frac{t_{k+1}-t_{k}}{(t_{k}-t_{k-1})^2} \|x_k-x_{k-1}\|^2,
  \end{equation*}
 for all \( k=0, 1, \ldots\, , \) and \( k=1,2, \ldots\, \), respectively. Moreover,         $\{x_k\}$ converge  to  $x_*\in B[\tilde{x}, t_*]$ such that  $F(x_*)\in C,$  
\begin{equation*}\label{eq:002}
 \|x_*-x_{k}\|   \leq  t_*-t_{k}, \qquad \qquad t_*-t_{k+1}\leq \frac{1}{2}( t_*-t_{k}), \qquad k=0,1, \ldots\,
\end{equation*}
and, therefore,    $\{t_{k}\}$  converges $Q$-linearly to $t_*$ and   $\{x_k\}$   converge $R$-linearly to $x_*$.  Additionally, if  $2bL< 1$ then the following inequalities hold
\begin{equation*}\label{eq:bd2}
\|x_{k+1}-x_{k}\|\leq    \frac{ L}{2\sqrt{1-2bL}}   \|x_k-x_{k-1}\|^2,  \qquad t_{k+1}-t_{k} \leq \frac{ L}{2\sqrt{1-2bL}}  ({t_k}-t_{k-1})^2, \quad k=0,1, \ldots\,, 
  \end{equation*}
and, as  a consequence,    $\{x_k\}$ and  $\{t_k\}$  converge $Q$-quadratically
 to $x_*$ and $t_*$, respectively, as follow
 \begin{equation*}\label{eq:bd3}
\limsup_{k\to \infty}\frac{\|x_{*}-x_{k+1}\|} {\|x_*-x_{k}\|^2}\leq  \frac{ L}{2\sqrt{1-2bL}} , \qquad  \qquad t_{*}-t_{k+1} \leq \frac{ L}{2\sqrt{1-2bL}} ,  \quad k=0,1, \ldots\,.
  \end{equation*}
\end{theorem}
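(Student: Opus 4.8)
The plan is to recognize this statement as the direct specialization of Theorem~\ref{th:knt1} obtained by taking the quadratic polynomial $f(t) = Lt^2/2 - t + b$ (with $R = +\infty$) as the majorant function. So the whole proof reduces to verifying that this particular $f$ is a majorant function for $F$ at $\tilde{x}$ satisfying {\bf h1}--{\bf h3} (and {\bf h4} when $2bL<1$) and that the hypothesis \eqref{KH} of Theorem~\ref{th:knt1} holds, after which every conclusion is inherited verbatim, with the quadratic-rate constant computed explicitly.

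First I would check {\bf h1}--{\bf h3}. One has $f(0) = b > 0$ and $f'(t) = Lt - 1$, so $f'(0) = -1$, which is {\bf h1}. Since $L > 0$, $f'$ is affine, hence convex and strictly increasing, giving {\bf h2}. The roots of $f$ are $(1 \pm \sqrt{1-2bL})/L$; the hypothesis $2bL \leq 1$ makes these real, so $f$ vanishes on $(0,\infty)$ and {\bf h3} holds, the smallest root being exactly $t_* = (1-\sqrt{1-2bL})/L$ as claimed. If moreover $2bL < 1$, the two roots are distinct and $f$ is strictly negative between them, so {\bf h4} holds. (The only mild subtlety is that $f$ now has unbounded domain $R = +\infty$, which is harmless since {\bf h1}--{\bf h4} only constrain the behaviour of $f$ at \emph{some} finite point.)

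Next I would check the two quantitative hypotheses. For the affine majorant condition \eqref{eq:MCAI}: the inclusion $B(\tilde{x},R) \subseteq \Omega$ is assumed, and for $x,y \in B(\tilde{x},R)$ a one-line computation gives $f'(\|y-x\|+\|x-\tilde{x}\|) - f'(\|x-\tilde{x}\|) = L\|y-x\|$, which is precisely the right-hand side of the assumed affine Lipschitz-like bound; hence \eqref{eq:MCAI} holds (indeed with equality on the majorant side). The Kantorovich-type inequality \eqref{KH} is exactly the hypothesis $\|T_{\tilde{x}}^{-1}(-F(\tilde{x}))\| \leqslant b = f(0)$.

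Finally I would invoke Theorem~\ref{th:knt1}. All statements on well-definedness, the containment of $\{x_k\}$ in $B(\tilde{x},t_*)$, monotonicity and convergence of $\{t_k\}$, the inequalities \eqref{eq:bd1}, the existence of the limit $x_*$ with $F(x_*)\in C$, and the linear-rate estimates \eqref{eq:lc1} transfer immediately. For the $Q$-quadratic estimates under $2bL<1$, the only thing to evaluate is the constant $D^-f'(t_*)/(-2f'(t_*))$: since $f'$ is affine, $D^-f'(t_*) = f''(t_*) = L$, and $f'(t_*) = Lt_* - 1 = -\sqrt{1-2bL}$, so the constant equals $L/(2\sqrt{1-2bL})$, which is exactly what appears in \eqref{eq:qcct}--\eqref{eq:qcs} in the specialized statement. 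I do not expect a genuine obstacle: the argument is a pure specialization, and the only points needing care are the elementary algebra identifying $t_*$ and $f'(t_*)$ and the remark above about $R=+\infty$.
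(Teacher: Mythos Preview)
Your proposal is correct and follows exactly the paper's own approach: verify that the quadratic $f(t)=Lt^2/2-t+b$ is a majorant function for $F$ at $\tilde{x}$ satisfying {\bf h1}--{\bf h3} (and {\bf h4} when $2bL<1$) together with \eqref{KH}, and then invoke Theorem~\ref{th:knt1}. In fact you give more detail than the paper, which simply asserts that $f$ is a majorant with smallest zero $t_*$ and cites Theorem~\ref{th:knt1}; your explicit computation of $D^-f'(t_*)/(-2f'(t_*))=L/(2\sqrt{1-2bL})$ is precisely the step needed to read off the quadratic-rate constants.
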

\begin{proof} 
 It easy  to see that $f:[0,+\infty)\to \mathbb{R}$
  defined by $ f(t):= Lt^2/2 - t + b$ is a majorant function
  to $F$ in $\tilde{x}$, with  the smallest zero equal to $t_*$. Therefore, the result follows from Theorem~\ref{th:knt1}.
\end{proof}
Under  the   affine invariant  Lipschitz-Like  condition,  the robust theorem on Newton's method for solving  nonlinear inclusion problems, namely, Theorem~\ref{th:knt2} becomes:
\begin{theorem}\label{th:kntlscl2}
Let $\banacha$, $\banachb$ be Banach spaces  and $X$ reflexive, $\Omega\subseteq \banacha$ an open set and
  $F:{\Omega}\to \banachb$  a continuously
  differentiable function and  $C\subset \banachb $ a nonempty closed convex cone.   Take  $\tilde{x}\in \Omega$,
  $L>0$ and $b>0$.  Suppose  that $\tilde{x}\in \Omega$, $F$ satisfies  the Robson's Condition at  $\tilde{x}$,    
  $$
   B(\tilde{x},R)\subseteq \Omega, \qquad \left\|T_{\tilde{x}}^{-1}\left[F'(y)-F'(x)\right]\right\| \leq  L\|y-x\|, \qquad \forall~ x,y\in B(\tilde{x},R),
  $$
  and $2bL< 1$. Moreover, assume that
  $$
  \left \|T_{\tilde{x}}^{-1}(-F(\tilde{x}))\right\|\leqslant b.
  $$
 Let $0\leq \rho<  (1-2Lb)/(4L)$  and  the quadratic   polynomial   $g:[0, + \infty)\to \mathbb{R}$ defined by
\begin{equation*} 
  g(t):=\frac{-1}{L\rho-1}\left[   \frac{L}{2}(t+\rho)^2 - (t+\rho) + b +2\rho\right].
\end{equation*}
Then the quadratic   polynomial   $g$ has the smallest zero given by
$$
 t_{*,\rho}:=\left(1-\rho L-\sqrt{1-2L(b-2\rho}\right)/L, 
 $$
 the sequences  generated by Newton's Method for solving   the inclusion 
  $
  F(x)\in C
  $ and  the equation  $g(t)=0$,  with starting  point $x_0=\hat{x}$ for any $\hat{x}\in  B(\tilde{x},  \rho)$ and $t_0=0$, respectively,
  \begin{align*}   
    x_{k+1}\in x_k  +  \mbox{argmin} \left\{ \|d\|: \,  F(x_k)+F'(x_k)d \in C \right\},  \quad    t_{k+1} ={t_k}- \frac{g(t_k)}{g'(t_k)}, \qquad  k=0,1,\ldots\,.
  \end{align*}  
  are well defined, $\{x_k\}$ is contained in $B(\tilde{x},  t_{*,\rho})$, $\{t_k\}$ is strictly increasing, is contained in   $[0,t_{*,\rho})$ and  converge  to  $t_{*,\rho}$ and   satisfy  the inequalities
  \begin{align*}  
 \|x_{k+1}-x_{k}\|   &\leq  t_{k+1}-t_{k}, \qquad &k=0, 1, \ldots\, , \\ \\
  \|x_{k+1}-x_{k}\|  &\leq   \frac{t_{k+1}-t_{k}}{(t_{k}-t_{k-1})^2} \|x_k-x_{k-1}\|^2\leq  \frac{L}{2\sqrt{1-2L(b-2\rho)}}  \|x_k-x_{k-1}\|^2, \qquad &k=1, 2, \ldots .
  \end{align*}  
 Moreover,    $\{x_k\}$ converge  to  $x_*\in B[\tilde{x}, t_{*,\rho}]$ such that  $F(x_*)\in C$,  satisfies the inequalities
\begin{equation*}
 \|x_*-x_{k}\|   \leq  t_{*,\rho}-t_{k}, \qquad \qquad t_{*,\rho}-t_{k+1}\leq \frac{1}{2}( t_{*,\rho}-t_{k}), \qquad k=0,1, \ldots\,
\end{equation*}
and the convergence of $\{x_k\}$ and  $\{t_k\}$ to  $x_*$ and $t_{*,\rho}$, respectively,  are  $Q$-quadratic as follows
 \begin{equation*}
\limsup_{k\to \infty}\frac{\|x_{*}-x_{k+1}\|} {\|x_*-x_{k}\|^2}\leq     \frac{L}{2\sqrt{1-2L(b-2\rho)}} ,  \quad \qquad t_{*,\rho}-t_{k+1} \leq   \frac{L}{2\sqrt{1-2L(b-2\rho)}}  ({t_{*,\rho}}-t_{k})^2,
  \end{equation*}
   for $k=0, 1, \ldots$. 
\end{theorem}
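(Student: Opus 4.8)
The plan is to obtain Theorem~\ref{th:kntlscl2} as the instance of Theorem~\ref{th:knt2} in which $R=+\infty$ and the majorant function is the quadratic $f:[0,+\infty)\to\mathbb{R}$, $f(t):=Lt^{2}/2-t+b$. So the first task is to check that this $f$ is a majorant function for $F$ at $\tilde{x}$ satisfying {\bf h4}. Conditions {\bf h1} and {\bf h2} are immediate: $f(0)=b>0$, $f'(0)=-1$, and $f'(t)=Lt-1$ is affine, hence convex, and strictly increasing. For the affine majorant inequality \eqref{eq:MCAI}, the point is that for this quadratic one has $f'(\|y-x\|+\|x-\tilde{x}\|)-f'(\|x-\tilde{x}\|)=L\|y-x\|$, so \eqref{eq:MCAI} is literally the hypothesis $\|T_{\tilde{x}}^{-1}[F'(y)-F'(x)]\|\le L\|y-x\|$. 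Finally, $2bL<1$ forces $f$ to have two distinct positive zeros, the smaller being $t_{*}=(1-\sqrt{1-2bL})/L$, between which $f$ takes negative values; hence {\bf h3} and {\bf h4} hold.

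Next I would compute $\beta=\sup\{-f(t):t\in[0,+\infty)\}$. Since the parabola $-f$ attains its maximum at $t=1/L$, one gets $\beta=(1-2bL)/(2L)$, so the hypothesis $0\le\rho<(1-2bL)/(4L)$ is precisely $0\le\rho<\beta/2$. Because $f'(\rho)=L\rho-1\ne0$ (Proposition~\ref{pr:maj.f}), the function $g(t)=\frac{-1}{f'(\rho)}[f(t+\rho)+2\rho]$ of Theorem~\ref{th:knt2} is exactly the quadratic polynomial displayed in the statement; its smallest zero is found by the substitution $u=t+\rho$ followed by the quadratic formula, which yields the claimed closed form for $t_{*,\rho}$.

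It then remains only to identify the $Q$-quadratic constant $D^{-}g'(t_{*,\rho})/(-2g'(t_{*,\rho}))$ of \eqref{eq:qbdct2} and \eqref{eq:qcrt2} with the explicit number in the statement. Here I would use that $g$ is quadratic, so $D^{-}g'(t_{*,\rho})=g''$ is the constant $-L/f'(\rho)$, while $g'(t_{*,\rho})=\frac{-1}{f'(\rho)}[L(t_{*,\rho}+\rho)-1]$, and substituting the root formula for $t_{*,\rho}$ shows that $L(t_{*,\rho}+\rho)-1$ equals minus the square root appearing in $t_{*,\rho}$; upon dividing, the common factor $f'(\rho)$ cancels and the stated radical expression emerges. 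Once all the hypotheses of Theorem~\ref{th:knt2} are verified for $F$ and $g$ at $\tilde{x}$ (the Robinson condition and the bound \eqref{KH.2} being assumed outright), each conclusion of that theorem transcribes term by term into the assertions of Theorem~\ref{th:kntlscl2}. The argument is entirely elementary; the only real care needed is the bookkeeping of the quadratic's roots and derivatives and making sure every inequality is copied with its correctly specialized constant, so the main obstacle is computational rather than conceptual.
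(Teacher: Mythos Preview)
Your proposal is correct and follows exactly the paper's approach: apply Theorem~\ref{th:knt2} with the quadratic majorant $f(t)=Lt^{2}/2-t+b$, check that the hypothesis $\rho<(1-2bL)/(4L)$ coincides with $\rho<\beta/2$, and then read off the explicit constants. Your write-up is in fact considerably more detailed than the paper's one-line proof, which merely asserts that $f$ is a majorant and records the value of $\beta$.
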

\begin{proof} 
The proof it follows from   Theorem~\ref{th:knt2} by noting that $f:[0,+\infty)\to \mathbb{R}$  defined by $ f(t):= Lt^2/2 - t + b$ is a majorant function to $F$ in $\tilde{x}$ and $\beta=(1-2Lb)/(4L)$. 
\end{proof}
\subsection{Convergence result under affine invariant Smale's condition}
In this section, we will present a robust convergence theorem on Newton's method for solving nonlinear inclusion problem under affine invariant  Smale's condition. For the degenerated  cone, i.e., $C=\{0\}$, this
is  the Corollary of Proposition 3 pp.~195 of \cite{S86}, see also Proposition 1 pp.~157 and Remark 1 pp.~158 of \cite{BCSS97}. 

First of all  we give a condition more easy to check then condition
\eqref{eq:MCAI}, when de functions under consideration are two times
continuously differentiable. For state the condition,  we need some definitions. Let $\banacha$,  $\banachb$  be a Banach spaces and  $\Omega\subseteq \banacha$ a open set. The norm of a  
$n$-th multilinear map $B:\banacha\times \ldots \times \banacha \to \banacha$  is defined by
  \[
 \|B\|:=\sup \left\{\|B(v_1, \dots, v_n) \| \;:\;\; v_1, \dots, v_n\in \banacha, \,\|v_i\|=1, \, i=1, \ldots, n \right\}.
 \]
 In particular, the norm of the $n$-th derivative of   $F:{\Omega}\to \banachb$   at a point  $x\in \Omega$ is given  by
 \[
 \|F^{(n)}(x)\|=\sup\left\{\|F^{(n)}(x)(v_1, \dots, v_n) \| ~:~ v_1, \dots, v_n\in \banacha, \,\|v_i\|=1, \, i=1, \ldots, n \right\}.
 \]
 Let $ T: \banacha \rightrightarrows  \banachb $   be sublinear mapping.  The   composition $TF^{(n)}(x):\banacha\times \ldots \times \banacha \rightrightarrows \banachb$ is defined by 
$
  TF^{(n)}(x)(v_1, \dots, v_n):=T(F^{(n)}(x)(v_1, \dots, v_n)).
$
Then,    for $F^{(n)}(x)(v_1, \dots, v_n) \in  \mbox{dom\,}T$ there hold:
\begin{equation*} \label{eq;pnorop}
  \| TF^{(n)}(x)\|=\sup \; \left\{ \left\|T(F^{(n)}(x)(v_1, \dots, v_n))\right\|~: ~v_1, \dots, v_n\in \banacha, \,\|v_i\|=1, \, i=1, \ldots, n \right\}, 
\end{equation*}
where $\|T(F^{(n)}(x)(v_1, \dots, v_n))\|:=\inf \{\|u\|~: ~u\in T(F^{(n)}(x)(v_1, \dots, v_n)) \}$.  

\begin{lemma} \label{lc}
Let $\banacha$  and $\banachb$  be a Banach spaces such that $ \banacha$ is reflexive,  $\Omega\subseteq \banacha$  and $F:{\Omega}\to \banachb$ a continuous function, two times continuously differentiable on $int(\Omega)$.      Suppose that   $\tilde{x}\in \Omega$ and     $\mbox{rge\,}T_{\tilde{x}}=\banachb$. If there exists  \mbox{$ f:[0,R)\to \mathbb{R}$} twice continuously differentiable such that
\begin{equation} \label{eq:lcec}
\|T_{\tilde{x}}^{-1}F''(x)\|\leqslant f''(\|x-\tilde{x}\|),
\end{equation}
for all $x\in \Omega$ such that $\|x-\tilde{x}\|<R$. Then $F$ and $f$
satisfy \eqref{eq:MCAI}.
\end{lemma}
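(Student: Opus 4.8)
The plan is to write $F'(y)-F'(x)$ as an integral of $F''$ along the segment joining $x$ and $y$, pair it with a test vector, and apply the integral estimate of Lemma~\ref{LiNg}. I will assume, as is implicit in \eqref{eq:MCAI}, that $B(\tilde{x},R)\subseteq\Omega$ (this is needed for $F$ to be defined along the segments below). Fix $x,y\in B(\tilde{x},R)$ with $\|x-\tilde{x}\|+\|y-x\|<R$ and put $z(\tau):=x+\tau(y-x)$ for $\tau\in[0,1]$; since $B(\tilde{x},R)$ is open, the whole segment lies in $\mathrm{int}(\Omega)$ and
\[
\|z(\tau)-\tilde{x}\|\leqslant\|x-\tilde{x}\|+\tau\|y-x\|<R,\qquad\tau\in[0,1].
\]
Because $F$ is twice continuously differentiable there, for every $d\in\banacha$ the fundamental theorem of calculus gives $[F'(y)-F'(x)]d=\int_{0}^{1}F''(z(\tau))(y-x,d)\,d\tau$ in $\banachb$.

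Next I apply Lemma~\ref{LiNg} with $U:=T_{\tilde{x}}^{-1}$; here $\banacha$ (in the role of $\banachc$ in that lemma) is reflexive, $T_{\tilde{x}}^{-1}$ is sublinear with closed graph, and $\mbox{rge\,}T_{\tilde{x}}=\banachb$ gives $\mbox{dom\,}T_{\tilde{x}}^{-1}=\banachb$ by Lemma~\ref{L:Rocka1}, so all the hypotheses on $U$ hold. Fix $d$ with $\|d\|\leqslant1$ and take $G(\tau):=F''(z(\tau))(y-x,d)$, which is continuous in $\tau$, and $g(\tau):=f''\big(\|x-\tilde{x}\|+\tau\|y-x\|\big)\|y-x\|$. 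By positive homogeneity of the sublinear map and the definition of $\|T_{\tilde{x}}^{-1}F''(z(\tau))\|$, then by \eqref{eq:lcec}, and finally by the displayed bound on $\|z(\tau)-\tilde{x}\|$ together with the monotonicity of $f''$ (that is, the convexity of $f'$),
\[
\left\|T_{\tilde{x}}^{-1}G(\tau)\right\|\leqslant\left\|T_{\tilde{x}}^{-1}F''(z(\tau))\right\|\,\|y-x\|\,\|d\|\leqslant f''(\|z(\tau)-\tilde{x}\|)\,\|y-x\|\leqslant g(\tau).
\]
Lemma~\ref{LiNg} then yields $\left\|T_{\tilde{x}}^{-1}([F'(y)-F'(x)]d)\right\|\leqslant\int_{0}^{1}g(\tau)\,d\tau$.

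Finally, the substitution $s=\|x-\tilde{x}\|+\tau\|y-x\|$ gives $\int_{0}^{1}g(\tau)\,d\tau=f'(\|x-\tilde{x}\|+\|y-x\|)-f'(\|x-\tilde{x}\|)$, a bound independent of $d$. Since $\mbox{dom\,}T_{\tilde{x}}^{-1}=\banachb$, the sublinear map $T_{\tilde{x}}^{-1}[F'(y)-F'(x)]$ has domain all of $\banacha$, so taking the supremum over $\|d\|\leqslant1$ and invoking the definition \eqref{eq;dn} of its norm gives precisely the inequality in \eqref{eq:MCAI}; together with $B(\tilde{x},R)\subseteq\Omega$ this shows $F$ and $f$ satisfy \eqref{eq:MCAI}. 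I expect the only delicate points to be the bookkeeping — $F'(y)-F'(x)$ is an operator, not a vector, so Lemma~\ref{LiNg} must be applied after pairing with a unit vector $d$, with the supremum over $d$ taken at the end — and the step $f''(\|z(\tau)-\tilde{x}\|)\leqslant f''(\|x-\tilde{x}\|+\tau\|y-x\|)$, which requires $f''$ to be non-decreasing; this is exactly where convexity of $f'$ (condition \textbf{h2}) is used, and without it the passage from \eqref{eq:lcec} to \eqref{eq:MCAI} would fail.
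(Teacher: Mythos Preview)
Your argument is correct and follows the same route as the paper: write $F'(y)-F'(x)$ as an integral of $F''$ along the segment, invoke the hypothesis \eqref{eq:lcec} pointwise, and apply Lemma~\ref{LiNg} with $U=T_{\tilde{x}}^{-1}$. Your version is in fact more careful than the paper's on the two points you flag yourself---pairing with a unit vector $d$ so that $G(\tau)\in\banachb$ as Lemma~\ref{LiNg} requires, and making explicit the step $f''(\|z(\tau)-\tilde{x}\|)\leqslant f''(\|x-\tilde{x}\|+\tau\|y-x\|)$, which the paper hides inside ``performing the above integrations'' and which indeed needs $f''$ non-decreasing (condition~\textbf{h2}).
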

\begin{proof}
Take  $x, y\in B(\tilde{x},R)$   such that $\|x-\tilde{x}\|+\|y-x\|<R$.  As  the ball is convex
  $
  x+\tau(y-x)\in B(\tilde{x},R),
  $
  for all $\tau\in [0,1]$.   Since, by assumption,  $\mbox{rge\,}T_{\tilde{x}}=\banachb$ we obtain   that $\mbox{dom\,}T_{\tilde{x}}^{-1}=\banachb$, hence properties of the norm and   assumption \eqref{eq:lcec} implies that
$$
  \left\|T_{\tilde{x}}^{-1}\left(F''(x+\tau(y-x))(y-x))\right)\right\|\leq   f''(\|(x-\tilde{x})+\tau(y-x)\|)\|y-x\|, \qquad \forall~ \tau\in [0, 1].
$$
Thus, as $\mbox{dom\,}T_{\tilde{x}}^{-1}=\banachb$   and  $\banacha$  is is reflexive,  we apply   Lemma~\ref{LiNg} with  $U=T_{\tilde{x}}^{-1}$,   $G(\tau)$ equal to the expression  in  parenthises on  the left had side of last inequality and $g(\tau)$ equal to the expression on the right hand site of that inequality,  obtaining   
$$
 \left\|T_{\tilde{x}}^{-1}\int_{0}^{1}F''(x+\tau(y-x))(y-x)d\tau\right\| \leqslant \int_{0}^{1}f''(\|(x-\tilde{x})+\tau(y-x)\|)\|y-x\|d\tau, 
$$
which, after performing the above integrations   yields the desired result.
 \end{proof}
\begin{theorem}\label{theo:Smale1}
Let $\banacha$, $\banachb$ be Banach spaces  and $\banacha$ reflexive, $\Omega\subseteq \banacha$ an open set and
  $F:{\Omega}\to \banachb$  a analytic  function and  $C\subset \banachb $ a nonempty closed convex cone.  Suppose  that $\tilde{x}\in \Omega$, $F$ satisfies  the Robson's Condition at  $\tilde{x}$,    
  \begin{equation*}\label{eq:SC}
    \gamma := \sup _{ k > 1 }\left\| \frac
    {T_{\tilde{x}}^{-1}F^{(k)}(\tilde{x})}{k !}\right\|^{\frac {1}{k-1}}< +\infty, 
  \end{equation*}
  $B(\tilde{x}, 1/\gamma)\subseteq \Omega$,  there exists  $b >0$ such that 
  \begin{equation*}
    \label{KH.2s}
 \left \|T_{\tilde{x}}^{-1}(-F(\tilde{x}))\right\|\leqslant b\,, 
  \end{equation*}
  and $\alpha:=b\gamma \leqslant 3-2\sqrt{2}.$   Define,  the analytic function $f:[0, 1/\gamma)\to \mathbb{R}$ as 
$
  f(t):= t/(1-\gamma t)-2t+b.
$
Then  $f$ has   
$$
 t_*:=\left(\alpha +1-\sqrt{(\alpha +1)^2-8\alpha}\right)/(4\gamma),
  $$ as  a smallest zero,  the sequences  generated by Newton's Method for solving   the analytics  inclusion $F(x)\in C $ and  the equation  $f(t)=0$,  with starting point  $x_0=\tilde{x}$ and $t_0=0$, respectively, 
  \begin{align*}   \label{ns.KT}
    x_{k+1}\in x_k  +  \mbox{argmin} \left\{ \|d\|: \,  F(x_k)+F'(x_k)d \in C \right\},  \quad    t_{k+1} ={t_k}- \frac{f(t_k)}{f'(t_k)}, \qquad  k=0,1,\ldots\,, 
  \end{align*}
  are well defined, $\{x_k\}$ is contained in $B(\tilde{x},  t_*)$, $\{t_k\}$ is strictly increasing, is contained in   $[0,t_*)$  and  converge  to  $t_*$ and    satisfy  the inequalities
\begin{equation*}\label{eq:bd}
  \|x_{k+1}-x_{k}\|   \leq  t_{k+1}-t_{k} , \qquad \qquad \|x_{k+1}-x_{k}\|\leq   \frac{t_{k+1}-t_{k}}{(t_{k}-t_{k-1})^2} \|x_k-x_{k-1}\|^2,
  \end{equation*}
 for all \( k=0, 1, \ldots\, , \) and \( k=1,2, \ldots\, \), respectively. Moreover,   $\{x_k\}$ converge  to  $x_*\in B[\tilde{x}, t_*]$ such that  $F(x_*)\in C,$  
\begin{equation*}\label{eq:002}
 \|x_*-x_{k}\|   \leq  t_*-t_{k}, \qquad \qquad t_*-t_{k+1}\leq \frac{1}{2}( t_*-t_{k}), \qquad k=0,1, \ldots\,
\end{equation*}
and, therefore,    $\{t_{k}\}$  converges $Q$-linearly to $t_*$ and   $\{x_k\}$   converge $R$-linearly to $x_*$.  Additionally, if  $\alpha < 3-2\sqrt{2}$ then the following inequalities hold
  \begin{align*}
  \|x_{k+1}-x_{k}\| &\leq     \frac{\gamma}{(1-\gamma t_*)[2(1-\gamma t_*)^2-1]}   \|x_k-x_{k-1}\|^2,  \qquad  &k= 0,1, \ldots\,,\\ \\
  t_{k+1}-t_{k}      & \leq   \frac{\gamma}{(1-\gamma t_*)[2(1-\gamma t_*)^2-1]}  ({t_k}-t_{k-1})^2, \quad   &k= 0,1, \ldots\,, 
\end{align*}
and, as  a consequence,    $\{x_k\}$ and  $\{t_k\}$  converge $Q$-quadratically
 to $x_*$ and $t_*$, respectively, as follow
  \begin{align*}
\limsup_{k\to \infty}\frac{\|x_{*}-x_{k+1}\|} {\|x_*-x_{k}\|^2} &\leq  \frac{\gamma}{(1-\gamma t_*)[2(1-\gamma t_*)^2-1]} ,\\ \\
t_{*}-t_{k+1}                                                                        &\leq  \frac{\gamma}{(1-\gamma t_*)[2(1-\gamma t_*)^2-1]} (t_*-t_{k}),   \qquad  &k= 0,1, \ldots\,,\\
\end{align*}
\end{theorem}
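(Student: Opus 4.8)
The plan is to obtain Theorem~\ref{theo:Smale1} as a corollary of Theorem~\ref{th:knt1}, exactly as Theorem~\ref{th:kntlscl1} was. The whole point is that the analytic function $f(t)=t/(1-\gamma t)-2t+b$ should be a majorant function for $F$ at $\tilde x$ on $[0,R)$ with $R=1/\gamma$. To check this I would first verify conditions \textbf{h1}--\textbf{h4}: a direct computation gives $f(0)=b>0$, $f'(t)=1/(1-\gamma t)^2-2$ so $f'(0)=-1$; $f''(t)=2\gamma/(1-\gamma t)^3>0$ on $[0,1/\gamma)$, so $f'$ is strictly increasing and convex, giving \textbf{h2}. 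For \textbf{h3}/\textbf{h4} one analyzes the quadratic obtained by clearing denominators: $f(t)=0 \iff 2\gamma t^2-(\alpha+1)t+\alpha=0$ (after multiplying by $(1-\gamma t)/ \text{const}$ and using $b=\alpha/\gamma$), whose smallest root is $t_*=(\alpha+1-\sqrt{(\alpha+1)^2-8\alpha})/(4\gamma)$; the discriminant $(\alpha+1)^2-8\alpha=\alpha^2-6\alpha+1$ is nonnegative exactly when $\alpha\le 3-2\sqrt2$, and strictly positive (giving \textbf{h4} and $f$ negative just past $t_*$) when $\alpha<3-2\sqrt2$. One also checks $t_*<1/\gamma$ so the root lies in the domain.

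The key analytic input is the affine majorant condition \eqref{eq:MCAI}. Here I would invoke Lemma~\ref{lc}: since $F$ is analytic, it is twice continuously differentiable, and it suffices to show $\|T_{\tilde x}^{-1}F''(x)\|\le f''(\|x-\tilde x\|)$ for $\|x-\tilde x\|<1/\gamma$. Using the Taylor expansion of $F''$ around $\tilde x$, sublinearity of $T_{\tilde x}^{-1}$ (so that $T_{\tilde x}^{-1}$ distributes over the series as a superset, and the norm is subadditive), and the definition of $\gamma$, one gets
\[
\|T_{\tilde x}^{-1}F''(x)\|\le \sum_{k\ge 2}\left\|\frac{T_{\tilde x}^{-1}F^{(k)}(\tilde x)}{(k-2)!}\right\|\|x-\tilde x\|^{k-2}
\le \sum_{k\ge 2} k(k-1)\gamma^{k-1}\|x-\tilde x\|^{k-2}=\frac{2\gamma}{(1-\gamma\|x-\tilde x\|)^3}=f''(\|x-\tilde x\|),
\]
where the middle inequality uses $\|T_{\tilde x}^{-1}F^{(k)}(\tilde x)/k!\|\le \gamma^{k-1}$. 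This reproduces the classical Smale estimate in the cone setting. Having this, Lemma~\ref{lc} yields \eqref{eq:MCAI}, so $f$ is a majorant function for $F$ at $\tilde x$.

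Finally, the hypothesis $\|T_{\tilde x}^{-1}(-F(\tilde x))\|\le b=f(0)$ is exactly \eqref{KH}, and $F$ satisfies Robinson's condition at $\tilde x$ by assumption. Therefore Theorem~\ref{th:knt1} applies verbatim and delivers well-definedness of $\{x_k\}$ and $\{t_k\}$, containment of $\{x_k\}$ in $B(\tilde x,t_*)$, the bounds \eqref{eq:bd1} and \eqref{eq:lc1}, convergence of $\{x_k\}$ to a solution $x_*$ of $F(x)\in C$, and, when $\alpha<3-2\sqrt2$ so that \textbf{h4} holds, the $Q$-quadratic rate \eqref{eq:qcs} with constant $D^-f'(t_*)/(-2f'(t_*))$. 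The last routine step is to evaluate this constant: since $f$ is $C^2$ here, $D^-f'(t_*)=f''(t_*)=2\gamma/(1-\gamma t_*)^3$ and $-f'(t_*)=2-1/(1-\gamma t_*)^2=[2(1-\gamma t_*)^2-1]/(1-\gamma t_*)^2$, so $f''(t_*)/(-2f'(t_*))=\gamma/\big((1-\gamma t_*)[2(1-\gamma t_*)^2-1]\big)$, matching the stated constant. I expect the main obstacle to be the bookkeeping for the series manipulation in the $F''$ estimate, specifically justifying that sublinearity of $T_{\tilde x}^{-1}$ together with Lemma~\ref{LiNg}-type arguments let one pass the infinite sum through $T_{\tilde x}^{-1}$ with the norm inequality intact; everything else is the quadratic-root algebra already sketched above.
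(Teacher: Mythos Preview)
Your proposal is correct and follows exactly the same route as the paper: verify that $f(t)=t/(1-\gamma t)-2t+b$ is a majorant function for $F$ at $\tilde x$ via Lemma~\ref{lc} (the second-derivative criterion) and then invoke Theorem~\ref{th:knt1}. The paper's own proof is a two-line sketch that outsources the verification of \eqref{eq:lcec} and of {\bf h1}--{\bf h4} to Wang~\cite{W99}, whereas you spell out these computations (the series bound $\|T_{\tilde x}^{-1}F''(x)\|\le 2\gamma/(1-\gamma\|x-\tilde x\|)^3$, the quadratic for $t_*$, and the evaluation of $D^-f'(t_*)/(-2f'(t_*))$) explicitly; the content is the same.
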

\begin{proof} 
  Use Lemma \ref{lc} to prove that $f:[0,1/\gamma) \to \mathbb{R}$
  defined by $ f(t)=t/(1-\gamma t)-2t+b,$ is a majorant function
  to $F$ in $\tilde{x}$, with root equal to $t_*$, see
  \cite{W99}.  Therefore, the result follows from Theorem~\ref{th:knt1}.
\end{proof}
Under  the affine invariant  Smale's   Condition,  the robust theorem on Newton's method for solving  nonlinear inclusion, namely, Theorem~\ref{th:knt2} becomes:
\begin{theorem}\label{theo:Smale2}
Let $\banacha$, $\banachb$ be Banach spaces  and $\banacha$ reflexive, $\Omega\subseteq \banacha$ an open set and
  $F:{\Omega}\to \banachb$  a analytic  function and  $C\subset \banachb $ a nonempty closed convex cone.  Suppose  that $\tilde{x}\in \Omega$, $F$ satisfies  the Robson's Condition at  $\tilde{x}$,    
  \begin{equation*}\label{eq:SC}
    \gamma := \sup _{ k > 1 }\left\| \frac
    {T_{\tilde{x}}^{-1}F^{(k)}(\tilde{x})}{k !}\right\|^{\frac {1}{k-1}}< +\infty, 
  \end{equation*}
  $B(\tilde{x}, 1/\gamma)\subseteq \Omega$,  there exists  $b >0$ such that 
  \begin{equation*} \label{KH.2s}
 \left \|T_{\tilde{x}}^{-1}(-F(\tilde{x}))\right\|\leqslant b\,, 
  \end{equation*}
  and $\alpha:=b \gamma < 3-2\sqrt{2}.$   Let $0\leq \rho< [\sqrt{2}(3-\alpha)-3]/(2\gamma\sqrt{2})$ and define   $g:[0, 1/\gamma-\rho)\to \mathbb{R}$ as 
$$
  g(t):= (t+\rho)/(1-\gamma (t+\rho))-2(t+\rho)+b+2\rho.
$$
Then  the scalar analytic   function   $g$ has a smallest zero given by
 $$
  t_{*,\rho}:=\left(\alpha +1-2\rho\gamma-\sqrt{(\alpha +1-2\rho\gamma)^2-8\alpha-8\rho\gamma(1-\alpha)}\right)/(4\gamma), 
 $$
 the sequences  generated by Newton's Method for solving   the analytics inclusion 
  $
  F(x)\in C
  $ and  equation  $g(t)=0$,  with starting  point $x_0=\hat{x}$ for any $\hat{x}\in  B(\tilde{x},  \rho)$ and $t_0=0$, respectively,
  \begin{align}   \label{ns.KT2}
    x_{k+1}\in x_k  +  \mbox{argmin} \left\{ \|d\|: \,  F(x_k)+F'(x_k)d \in C \right\},  \quad    t_{k+1} ={t_k}- \frac{g(t_k)}{g'(t_k)}, \qquad  k=0,1,\ldots\,.
  \end{align}  are well defined, $\{x_k\}$ is contained in $B(\tilde{x},  t_{*,\rho})$, $\{t_k\}$ is strictly increasing, is contained in   $[0,t_{*,\rho})$ and  converge  to  $t_{*,\rho}$ and   satisfy  the inequalities
\begin{align*}
 \|x_{k+1}-x_{k}\|   & \leq  t_{k+1}-t_{k}, \qquad \qquad \qquad    k=0, 1, \ldots\, ,  \\ \\
 \|x_{k+1}-x_{k}\|   &\leq   \frac{t_{k+1}-t_{k}}{(t_{k}-t_{k-1})^2} \|x_k-x_{k-1}\|^2\leq   \frac{\gamma}{(1-\gamma (t_{*,\rho}+\rho))[2(1-\gamma (t_{*,\rho}+\rho))^2-1]}  \|x_k-x_{k-1}\|^2, 
 \end{align*}
 for $k=1, 2, \ldots$.  Moreover,    $\{x_k\}$ converge  to  $x_*\in B[\tilde{x}, t_{*,\rho}]$ such that  $F(x_*)\in C$,  satisfies the inequalities
\begin{equation*}
 \|x_*-x_{k}\|   \leq  t_{*,\rho}-t_{k}, \qquad \qquad t_{*,\rho}-t_{k+1}\leq \frac{1}{2}( t_{*,\rho}-t_{k}), \qquad k=0,1, \ldots\,
\end{equation*}
and the convergence of $\{x_k\}$ and  $\{t_k\}$ to  $x_*$ and $t_{*,\rho}$, respectively,  are  $Q$-quadratic as follows
\begin{align*}
\limsup_{k\to \infty}\frac{\|x_{*}-x_{k+1}\|} {\|x_*-x_{k}\|^2} &\leq      \frac{\gamma}{(1-\gamma (t_{*,\rho}+\rho))[2(1-\gamma (t_{*,\rho}+\rho))^2-1]}, \\ \\
 \qquad t_{*,\rho}-t_{k+1}                                                   &\leq   \frac{\gamma}{(1-\gamma (t_{*,\rho}+\rho))[2(1-\gamma (t_{*,\rho}+\rho))^2-1]}   ({t_{*,\rho}}-t_{k})^2, \qquad k=0, 1, \ldots.
\end{align*}
\end{theorem}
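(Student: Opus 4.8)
The plan is to reduce Theorem~\ref{theo:Smale2} to Theorem~\ref{th:knt2}, exactly as Theorem~\ref{theo:Smale1} is reduced to Theorem~\ref{th:knt1}: once the Smale-type scalar function is shown to be an admissible majorant function, everything else is an application of Theorem~\ref{th:knt2} plus elementary algebra. First I would set $R:=1/\gamma$ and $f(t):=t/(1-\gamma t)-2t+b$ on $[0,1/\gamma)$ and invoke Lemma~\ref{lc}. Because $F$ is analytic on $B(\tilde{x},1/\gamma)\subseteq\Omega$, Taylor expansion of $F''$ about $\tilde{x}$, the definition of $\gamma$ (which gives $\|T_{\tilde{x}}^{-1}F^{(k)}(\tilde{x})\|\leqslant k!\,\gamma^{k-1}$ for $k>1$), and superadditivity of the sublinear map $T_{\tilde{x}}^{-1}$ yield, for $x\in B(\tilde{x},1/\gamma)$, the bound $\|T_{\tilde{x}}^{-1}F''(x)\|\leqslant\sum_{k\geqslant2}k(k-1)\gamma^{k-1}\|x-\tilde{x}\|^{k-2}=2\gamma/(1-\gamma\|x-\tilde{x}\|)^3=f''(\|x-\tilde{x}\|)$; Lemma~\ref{lc} then gives that the pair $(f,F)$ satisfies \eqref{eq:MCAI}. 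This is the classical computation underlying Smale's $\gamma$-theory (see \cite{W99}, \cite{BCSS97}, \cite{S86}). The remaining axioms are elementary: $f(0)=b>0$, $f'(0)=-1$, $f'(t)=1/(1-\gamma t)^2-2$ is convex and strictly increasing, and clearing the denominator in $f(t)=0$ gives $2\gamma t^2-(1+\alpha)t+b=0$ with $\alpha=b\gamma$, whose discriminant $(1+\alpha)^2-8\alpha$ is positive precisely when $\alpha<3-2\sqrt{2}$; hence $f$ has two roots in $(0,1/\gamma)$, so {\bf h4} holds and the smallest zero is $t_*=(1+\alpha-\sqrt{(1+\alpha)^2-8\alpha})/(4\gamma)$.

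Next I would compute $\beta:=\sup\{-f(t):t\in[0,1/\gamma)\}$. By Proposition~\ref{pr:maj.f}(iii), $\beta=-\lim_{t\to\bar{t}^-}f(t)=-f(\bar{t})$, and $\bar{t}$ is the unique zero of $f'$, namely $\bar{t}=(1-1/\sqrt{2})/\gamma$ (so that $1-\gamma\bar{t}=1/\sqrt{2}$); substituting gives $\beta=(3-2\sqrt{2})/\gamma-b=(3-2\sqrt{2}-\alpha)/\gamma$, so the requirement $0\leqslant\rho<\beta/2=(3-2\sqrt{2}-\alpha)/(2\gamma)$ of Theorem~\ref{th:knt2} is exactly the restriction on $\rho$ imposed in the statement. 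For any fixed $\hat{x}\in B(\tilde{x},\rho)$, Theorem~\ref{th:knt2} then applies to $F$, $f$ and $\rho$ and yields all the asserted conclusions for the shifted majorant $\tilde{g}(t):=-[f(t+\rho)+2\rho]/f'(\rho)$. I would close this step with the observation that multiplying $\tilde{g}$ by the positive constant $-f'(\rho)$ changes neither its smallest zero, nor the Newton map $t\mapsto t-\tilde{g}(t)/\tilde{g}'(t)$, nor the ratio $D^-\tilde{g}'/(-2\tilde{g}')$; consequently every conclusion of Theorem~\ref{th:knt2} holds verbatim with $\tilde{g}$ replaced by the unnormalized $g(t)=f(t+\rho)+2\rho=(t+\rho)/(1-\gamma(t+\rho))-2(t+\rho)+b+2\rho$ appearing in the statement.

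It then remains to put the two closed forms into the stated shape. Writing $s=t+\rho$ and clearing the denominator in $g(t)=0$ gives $2\gamma s^2-(1+\alpha')s+(b+2\rho)=0$ with $\alpha':=\gamma(b+2\rho)=\alpha+2\gamma\rho$; the smaller root is $s_*=(1+\alpha'-\sqrt{(1+\alpha')^2-8\alpha'})/(4\gamma)$, and expanding $(1+\alpha')^2-8\alpha'$ in $\alpha$ and $\gamma\rho$ turns $t_{*,\rho}=s_*-\rho$ into $(\alpha+1-2\rho\gamma-\sqrt{(\alpha+1-2\rho\gamma)^2-8\alpha-8\rho\gamma(1-\alpha)})/(4\gamma)$, as claimed. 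For the $Q$-quadratic factor, $g$ is analytic, so $D^-g'(t_{*,\rho})=g''(t_{*,\rho})$; with $g'(t)=1/(1-\gamma(t+\rho))^2-2$ and $g''(t)=2\gamma/(1-\gamma(t+\rho))^3$, setting $u:=1-\gamma(t_{*,\rho}+\rho)$ gives $D^-g'(t_{*,\rho})/(-2g'(t_{*,\rho}))=(2\gamma/u^3)/(2(2u^2-1)/u^2)=\gamma/(u(2u^2-1))$, which is the constant $\gamma/\{(1-\gamma(t_{*,\rho}+\rho))[2(1-\gamma(t_{*,\rho}+\rho))^2-1]\}$ in the inequalities. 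Substituting $t_{*,\rho}$ and this factor into \eqref{eq:bdct2}, \eqref{eq:qbdct2}, \eqref{eq:lcr2} and \eqref{eq:qcrt2} finishes the proof. The only genuinely substantive step is the appeal to Lemma~\ref{lc} together with the bound on $\|T_{\tilde{x}}^{-1}F''(x)\|$ obtained from $\gamma<+\infty$; all the rest is manipulation of quadratics, with the scale-invariance remark identifying the unnormalized $g$ of the statement with the normalized majorant produced by Theorem~\ref{th:knt2}.
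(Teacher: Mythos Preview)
Your route is exactly the paper's: show via Lemma~\ref{lc} that $f(t)=t/(1-\gamma t)-2t+b$ on $[0,1/\gamma)$ is a majorant for $F$ at $\tilde{x}$, then invoke Theorem~\ref{th:knt2}. You supply substantially more detail than the paper's two-line proof, and in particular your scale-invariance remark---that the $g$ in the statement is $-f'(\rho)$ times the $g$ produced by Theorem~\ref{th:knt2}, so the Newton map $t\mapsto t-g(t)/g'(t)$, the smallest zero, and the ratio $D^-g'(t_{*,\rho})/(-2g'(t_{*,\rho}))$ are unchanged---addresses a point the paper leaves entirely implicit. Your closed-form computations for $t_{*,\rho}$ and for the $Q$-quadratic constant are correct.

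One slip to fix: your computation $\beta=(3-2\sqrt{2}-\alpha)/\gamma$ is right, but $\beta/2=(3-2\sqrt{2}-\alpha)/(2\gamma)=(6-4\sqrt{2}-2\alpha)/(4\gamma)$ is \emph{not} the bound $[\sqrt{2}(3-\alpha)-3]/(2\gamma\sqrt{2})=(6-3\sqrt{2}-2\alpha)/(4\gamma)$ written in the theorem; the two differ by $\sqrt{2}/(4\gamma)$. The paper's own proof records this same expression for $\beta$, so the mismatch appears to be a typo in the paper rather than a defect in your argument, but you should not assert that the two quantities coincide. Your proof as written actually establishes the conclusion only for $\rho<(3-2\sqrt{2}-\alpha)/(2\gamma)$, which is the range Theorem~\ref{th:knt2} genuinely licenses.
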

\begin{proof} 
Use Lemma \ref{lc} to prove that $f:[0,1/\gamma) \to \mathbb{R}$  defined by $ f(t)=t/(1-\gamma t)-2t+b,$ is a majorant function
  to $F$ in $\tilde{x}$ (see \cite{W99}).   The proof it follows from   Theorem~\ref{th:knt2} by noting that $f$ is a majorant function to $F$ in $\tilde{x}$ and $\beta=[\sqrt{2}(3-\alpha)-3]/(2\gamma\sqrt{2})$.
\end{proof}
\section{Final remarks } \label{fr}
Let us finally make a few brief comments on  computational aspects of Newton's  method  for solving the nonlinear inclusion \eqref{eq:Inc}. Note that the first equality in \eqref{ns.KT} implies
$$
x_{k+1}={x_k}+d_k,\qquad F(x_k)+F'(x_k)d_k\in C,  \qquad k=0,1,\ldots.
$$
and for the degenerated cone $C=\{0\}$ the above iteration formally   becomes the Newton's iteration for solving nonlinear equation   $F(x)=0$, that is  
\begin{equation} \label{eq:ca1}
x_{k+1}={x_k}+d_k,\qquad F(x_k)+F'(x_k)d_k=0,  \qquad k=0,1,\ldots.
\end{equation}
Since the solution of the linear  equation \eqref{eq:ca1}   is computationally expensive, namely, at each iteration the deriavtive  at $x_k$ must be computed and stored.  To circumvent  drawbacks like this, Dembo, Eisenstat and Steihaug  introduced in \cite{DE1} the inexact Newton's method  for solving nonlinear equation $F(x)=0$. The inexact Newton's methods  for solving nonlinear equation $F(x)=0$ is any method which, given an initial point $x_0$, generates a sequence $\{ x_k\}$  as follows:  
$$
x_{k+1}={x_k}+d_k,\qquad \|F(x_k)+ F'(x_k)d_k\|\leq \theta_{k}\|F(x_{k})\|,  \qquad k=0,1,\ldots,
$$
for a suitable forcing sequence $\{\theta_{k}\}$,   which is used to control the level of accuracy. Hence, solutions of practical problems are obtained by computational implementations  of the inexact Newton's methods. Therefore,  we  extend the inexact Newton's methods   for solving nonlinear inclusion as any method which, given an initial point $x_0$, generates a sequence $\{ x_k\}$  as follows:  
$$
x_{k+1}={x_k}+d_k,\qquad   d(0, F(x_k)+ F'(x_k)d_k-C)\leq \theta_{k}d(0, F(x_{k})-C), \qquad k=0,1,\ldots,
$$
 for a suitable forcing sequence $\{\theta_{k}\}$, where  $ d(x, D)$ denotes the distance from a point $x\in \banacha$ to the subset $D\subset  \banachb$; that is   $d(x, D)=\inf\{\|x-x'\| ~: ~x'\in D\}$.  The analysis of  these methods  under  Lipchitz-like and   majorant conditions will be done in the future.
\section*{Acknowledgment}

This work was done while the  author were  visiting  The Institute of Computational Mathematics and Scientific/Engineering Computing, Academy of Mathematics and Systems Science at Chinese Academy of Science. He is  grateful to host institution for the congenial
scientific atmosphere that it has provided during his visit.

\end{document}